\documentclass[11pt,reqno]{amsart}

\usepackage[latin1]{inputenc}
\usepackage{amsmath,amsthm,amssymb,amscd,mathrsfs,mathtools,color,esint}
\usepackage[perpage]{footmisc}
\definecolor{darkgreen}{rgb}{0.0, 0.6, 0.13}

\usepackage{hyperref}

\evensidemargin 0.0in \oddsidemargin 0.0in \textwidth 6.5in
\topmargin  -0.2in \textheight  9.0in \overfullrule = 0pt

\newtheorem{thm}{Theorem}[section]
 \newtheorem{cor}[thm]{Corollary}
 \newtheorem{lem}[thm]{Lemma}
 \newtheorem{prop}[thm]{Proposition}
\newtheorem{claim}[thm]{Claim}
 \theoremstyle{definition}
 \newtheorem{df}[thm]{Definition}
 \theoremstyle{remark}
 \newtheorem{rem}[thm]{Remark}
 
 \numberwithin{equation}{section}

\newcommand{\defeq}{\mathrel{\mathop:}=}

\newcommand{\R}{\mathbb R}
\newcommand{\T}{\mathbb T}
\newcommand{\Z}{\mathbb Z}
\newcommand{\Jm}{\mathcal J}
\newcommand{\Rm}{\mathcal R}
\newcommand{\Lm}{\mathcal L}
\newcommand{\Tm}{\mathcal{T}}
\newcommand{\Qm}{\mathcal Q}
\newcommand {\Nm}{\mathcal{N}}
\newcommand{\Cm}{\mathcal C}

 \newcommand{\Addresses}{{
  \bigskip
  \footnotesize

  Yu Deng, \textsc{Department of Mathematics, University of Southern California, Los Angeles, CA, USA}\par\nopagebreak
  \textit{E-mail address}: \texttt{yudeng@usc.edu}

  \bigskip

  Zaher Hani, \textsc{Department of Mathematics, University of Michigan, Ann Arbor, MI, USA}\par\nopagebreak
 \textit{E-mail address}: \texttt{zhani@umich.edu}
  
  }}

  \begin{document}
  \author{Yu Deng and Zaher Hani}
 \title{On the derivation of the wave kinetic equation for NLS}
 \begin{abstract}
A fundamental question in wave turbulence theory is to understand how the ``wave kinetic equation" (WKE) describes the long-time dynamics of its associated nonlinear dispersive equation. Formal derivations in the physics literature, dating back to the work of Pieirls in 1928, suggest that such a kinetic description should hold (for well-prepared random data) at a large \emph{kinetic time scale} $T_{\mathrm{kin}} \gg1$, and in a limiting regime where the size $L$ of the domain goes to infinity, and the strength $\alpha$ of the nonlinearity goes to 0 (weak nonlinearity). For the cubic nonlinear Schr\"odinger equation, $T_{\mathrm{kin}}=O(\alpha^{-2})$ and $\alpha$ is related to the conserved mass $\lambda$ of the solution via $\alpha=\lambda^2 L^{-d}$. 
\medskip
 
 In this paper, we study the rigorous justification of this monumental statement, and show that the answer seems to depend on the particular ``\emph{scaling law}" in which the $(\alpha, L)$ limit is taken, in a spirit similar to how the Boltzmann-Grad scaling law is imposed in the derivation of Boltzmann's equation. In particular, there appears to be \emph{two} favorable scaling laws: when $\alpha$ approaches $0$ like $L^{-\varepsilon+}$ or like $L^{-1-\frac{\varepsilon}{2}+}$ (for arbitrary small $\varepsilon$), we exhibit the wave kinetic equation up to timescales $O(T_{\mathrm{kin}}L^{-\varepsilon})$, by showing that the relevant Feynman diagram expansions converge absolutely (as a sum over paired trees). For the other scaling laws, we justify the onset of the kinetic description at timescales $T_*\ll T_{\mathrm{kin}}$, and identify specific interactions that become very large for times beyond $T_*$. In particular, the relevant tree expansion diverges absolutely there. In light of those interactions, extending the kinetic description beyond $T_*$ towards $T_{\mathrm{kin}}$ for such scaling laws seems to require new methods and ideas.
\end{abstract}

 \maketitle
 
 \tableofcontents
 \section{Introduction}
 
 The \emph{kinetic framework} is a general paradigm that aims to extend Boltzmann's kinetic theory for dilute gases to other types of microscopic interacting systems. This approach has been highly informative, and became a corner stone of the theory of nonequilibrium statistical mechanics for a large body of systems \cite{Spohn1,SpohnBook}. In the context of nonlinear dispersive waves, this framework was initiated in the first half of the past century \cite{Peierls} and developed into what is now called \emph{wave turbulence theory} \cite{ZLFBook, Nazarenko}. There, waves of different frequencies interact nonlinearly at the microscopic level, and the goal is to extract an effective macroscopic picture of how the energy densities of the system evolve. 
 
 The description of such an effective evolution comes via the \emph{wave kinetic equation} (WKE), which is the analog of Boltzmann's equation for nonlinear wave systems \cite{SpohnBE}. Such kinetic equations have been derived at a formal level for many systems of physical interest (NLS, NLW, water waves, plasma models, lattice crystal dynamics, etc.~cf.~\cite{Nazarenko} for a textbook treatment) and are used extensively in applications (thermal conductivity in crystals \cite{SpohnPBE}, ocean forecasting \cite{Janssen, WMO}, etc.). This kinetic description is conjectured to appear in the limit where the number of (locally interacting) waves goes to infinity, and an appropriate measure of the interaction strength goes to zero (weak nonlinearity\footnote{It is for this reason that the theory is sometimes called \emph{weak turbulence theory.}}). In such kinetic limits, the total energy of the whole system often diverges. 
 
 The fundamental mathematical question here, which also has direct consequences for the physical theory, is to provide a rigorous justification of such wave kinetic equations starting from the microscopic dynamics given by the nonlinear dispersive model at hand. The importance of such an endeavor stems from the fact that it allows to understand the exact regimes and the limitations of the kinetic theory, which has long been a matter of scientific interest (see \cite{DLN, Mordant}). A few mathematical investigations have recently been devoted to study problems of this spirit \cite{Faou, BGHS2, LukSpohn}, that yielded some partial results and useful insights.
 
 \medskip
 
This manuscript continues the investigation initiated in \cite{BGHS2} aimed at providing a rigorous justification of the wave kinetic equation corresponding to the nonlinear Schr\"odinger equation,
\[
i \partial_t v - \Delta v + |v|^{2} v=0.
\]
As we shall explain later, the sign of the nonlinearity has no effect on the kinetic description, so we choose the defocusing sign for concreteness. The natural setup for the problem is to start with a spatial domain given by a torus $\T^d_L$ of size $L$, which approaches infinity in the thermodynamic limit we seek. This torus can be rational or irrational, which amounts to rescaling the Laplacian into
\[
\Delta_\beta \defeq  \frac{1}{2\pi}  \sum\limits_{i=1}^d \beta_i \partial_i^2, \qquad \beta \defeq  (\beta_1,\dots,\beta_d)\in [1,2]^d,
\]
and taking the spatial domain to be the standard torus of size $L$, namely $\T^d_L=[0,L]^d$ with periodic boundary conditions. With this normalization, an irrational torus would correspond to taking the $\beta_j$ to be rationally independent. Our results cover both cases, and in part of them $\beta$ is assumed to be generic, i.e. avoiding a set of Lebesgue measure 0.

The strength of the nonlinearity is related to the characteristic size $\lambda$ of the initial data (say in the conserved $L^2$ space). Adopting the ansatz $v=\lambda u$, we arrive at the following equation:
 \[
 \tag{NLS} \label{NLS}
\begin{cases}
i \partial_t u - \Delta_\beta u + \lambda^{2} |u|^{2} u=0,  \quad x\in \mathbb{T}_L^d = [0,L]^d,   \\[.6em]
u(0,x) = u_{\textrm{in}}(x).
\end{cases}
\]

\medskip

The kinetic description of the longtime behavior is akin to a law of large numbers, and as such one has to start with a random distribution of the initial data. Heuristically, a randomly distributed, $L^{2}$-normalized field would (with high probability) have a roughly uniform spatial distribution, and consequently an $L_x^\infty$ norm $\sim L^{-d/2}$. This makes the strength of the nonlinearity in \eqref{NLS} comparable to $\lambda^2 L^{-d}$ (at least initially\footnote{Formal derivations of the wave kinetic equation often involve heuristic arguments (like a propagation of quasi-gaussianity of the initial data through time), which effectively imply that the strength of the nonlinearity stays $\sim \lambda^2 L^{-d}$. Such heuristic arguments are hard to justify rigorously, however this bound on the nonlinearity strength will be propagated and proved as a consequence of our estimates. }), which motivates us to introduce the quantity $$\alpha=\lambda^2L^{-d},$$ and phrase the results in terms of $\alpha$ instead of $\lambda$. 
The kinetic \emph{conjecture} states that at sufficiently long timescales, the effective dynamics of the Fourier-space mass density $\mathbb E |\widehat u(t, k)|^2$ ($k \in \Z^d_L=L^{-1}\Z^d$) is well approximated -in the limit of large $L$ and vanishing $\alpha$- by (an appropriately scaled) solution $n(t, \xi)$ of the following wave kinetic equation (WKE):
\[
\tag{WKE}\label{WKE}
\begin{split}
\partial_t n(t, \xi) =&\mathcal K\left(n(t, \cdot)\right), \\
\mathcal K(\phi)(\xi):=& \int_{\substack{(\xi_1, \xi_2, \xi_3)\in \R^{3d}\\\xi_1-\xi_2+\xi_3=\xi}} \phi \phi_1 \phi_2 \phi_3\left(\frac{1}{\phi_1}-\frac{1}{\phi_2}+\frac{1}{\phi_3}-\frac{1}{\phi}\right)\delta_{\R}(|\xi_1|_\beta^2-|\xi_2|_\beta^2+|\xi_3|_\beta^2-|\xi|_\beta^2)\, d\xi_1 d\xi_2 d\xi_3,
\end{split}
\]
where we used the shorthand notations $\phi_j:=\phi(\xi_j)$ and $|\xi|^2_\beta=\sum_{j=1}^d \beta_j (\xi^{(j)})^2$ for $\xi=(\xi^{(1)},\cdots,\xi^{(d)})$. More precisely, one expects this approximation to hold at the kinetic timescale $T_{\mathrm{kin}}\sim \alpha^{-2}=\frac{L^{2d}}{\lambda^4}$, in the sense that 
\begin{equation}\label{approximation}
\mathbb E |\widehat u(t, k)|^2 \approx n\left(\frac{t}{T_{\mathrm{kin}}}, k\right), \qquad \textrm{as } L\to \infty, \alpha \to 0.
\end{equation}

Of course, for such an approximation to hold at time $t=0$, one has to start with a \emph{well-prepared} initial distribution for $\widehat u_{\textrm{in}}(k)$ as follows: Denoting by $n_{\textrm{in}}$ the initial data for \eqref{WKE}, we assume
\begin{equation}\label{wellprepared}
\widehat u_{\mathrm{in}}(k)=\sqrt{n_{\textrm{in}}(k)} \, \eta_{k}(\omega)
\end{equation}
where $\eta_{k}(\omega)$ are mean-zero complex random variables satisfying $\mathbb E |\eta_k|^2=1$. In what follows, $\eta_k(\omega)$ will be independent, identically distributed complex random variables, such that the law of each $\eta_k$ is either the normalized complex Gaussian, or the uniform distribution on the unit circle $|z|=1$.

\medskip 

Before stating our results, it is worth remarking on the regime of data and solutions covered by this kinetic picture in comparison to previously studied and well-understood regimes in the nonlinear dispersive literature. For this, let us look back at the (pre-ansatz) NLS solution $v$, whose conserved energy is given by
$$
\mathcal E[v]:=\int_{\T^d_L} \frac{1}{2}|\nabla v|^2 +\frac{1}{4}|v|^4 \; dx.
$$
We are dealing with solutions having $L^\infty$ norm of $O(\sqrt \alpha)$ (with high probability), and whose total mass is $O(\alpha L^d)$, in a regime where $\alpha$ is vanishingly small and $L$ is asymptotically large. These bounds on the solutions are true initially as we explained above, and will be propagated in our proof. In particular, the mass and energy are very large and will diverge in this kinetic limit, as is common when taking thermodynamic limits \cite{Ruelle, Minlos}. 
Moreover, the potential part of the energy is dominated by the kinetic part (the former being of size $O(\alpha^3 L^d)$ whereas the latter of size $O(\alpha L^d)$), which explains why there is no distinction between the defocusing and focusing nonlinearity in the kinetic limit. It would be interesting to see how the kinetic framework can be extended to regimes of solutions which are sensitive to the sign of the nonlinearity; this has been investigated in the physics literature (see for example \cite{DNPZ, FGMW, ZKPR}).

\medskip

\subsection{Statement of the results} 
It is not a priori clear how the limits $L\to \infty$ and $\alpha \to 0$ need to be taken for \eqref{approximation} to hold, and if there's an additional scaling law (between $\alpha$ and $L$) that needs to be satisfied in the limit. In comparison, such scaling laws are imposed in the rigorous derivation of Boltzmann's equation \cite{Lanford, CIP, GSRT}, which is derived in the so-called Boltzmann-Grad limit \cite{Grad}, namely the number $N$ of particles goes to $\infty$, while their radius $r$ goes to $0$ in such a way that $Nr^{d-1}\sim O(1)$. To the best of our knowledge, this central point has not been adequately addressed in the wave turbulence literature. 

Our results seem to suggest some key differences depending on the chosen scaling law. Roughly speaking, we identify \emph{two special scaling laws} for which we are able to justify the approximation \eqref{approximation} up to timescales $L^{-\varepsilon} T_{\textrm{kin}}$ for any arbitrarily small $\varepsilon>0$. For other scaling laws, we identify significant absolute divergences in the power series expansion for $\mathbb E |\widehat u(t, k)|^2$ at much earlier times. As such, we can only justify this approximation at such shorter times (which are still better than those in \cite{BGHS2}). In these cases, whether or not \eqref{approximation} holds up to timescales $L^{-\varepsilon} T_{\textrm{kin}}$ depends on whether such series converge conditionally instead of absolutely, and as such would require new methods and ideas, as we explain below.

\medskip
We start with our first theorem identifying the two favorable scaling laws.

\begin{thm}\label{thm1}
Let $d\geq 2$, and $\beta\in [1,2]^d$ be arbitrary. Suppose that $n_{\mathrm{in}} \in \mathcal S(\R^d \to [0, \infty))$ is Schwartz\footnote{In fact, only a finite amount of decay and smoothness is needed on $n_{\mathrm{in}}$. We chose $n_{\mathrm{in}}\in \mathcal S$ to simplify the exposition and avoid minor distracting technicalities.}, and $\eta_{k}(\omega)$ are independent, identically distributed complex random variables, such that the law of each $\eta_k$ is either complex Gaussian with mean $0$ and variance $1$, or the uniform distribution on the unit circle $|z|=1$. Assume well-prepared initial data $u_{\mathrm{in}}$ for \eqref{NLS} as in \eqref{wellprepared}.

\medskip 

Fix $0<\varepsilon<1$ (in most interesting cases $\varepsilon$ will be small), and set $\alpha=\lambda^2L^{-d}$ to be the characteristic strength of the nonlinearity. If $\alpha$ has the scaling law $\alpha\sim L^{-\varepsilon+}$ or $\alpha\sim L^{-1-\frac{\varepsilon}{2}+}$ (where $c+$ represents a number strictly bigger than and sufficiently close to the given $c$), then there holds
\begin{equation}\label{approx2}
\mathbb E |\widehat u(t, k)|^2 =n_{\mathrm{in}}(k)+\frac{t}{T_{\mathrm{kin}}}\mathcal K(n_{\mathrm{in}})(k)+o_{\ell^\infty_k}\left(\frac{t}{T_{\mathrm {kin}}}\right)_{L \to \infty}
\end{equation}
for all $L^{0+} \leq t \leq L^{-\varepsilon} T_{\mathrm{kin}}$, where $T_{\mathrm{kin}}=\alpha^{-2}$, $\mathcal K$ is defined in \eqref{WKE}, and $o_{\ell^\infty_k}\left(\frac{t}{T_{\mathrm {kin}}}\right)_{L \to \infty}$ is a quantity that is bounded in $\ell^\infty_k$ by $L^{-\theta} \frac{t}{T_{\mathrm {kin}}}$ for some $\theta>0$.

\end{thm}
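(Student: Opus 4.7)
The plan is to expand $a_k(t) \defeq e^{it|k|_\beta^2/(2\pi)}\widehat u(t,k)$ as an infinite Feynman diagram series indexed by ternary trees, form $\mathbb E|a_k|^2$ to collapse the double series into a sum over \emph{paired trees}, isolate the leading ``ladder'' diagram which in the limit $L\to\infty$ recovers $(t/T_{\mathrm{kin}})\mathcal K(n_{\mathrm{in}})(k)$, and then bound every remaining paired tree by a quantity of the form $C^n L^{-\theta n}(t/T_{\mathrm{kin}})$ for some $\theta=\theta(\varepsilon)>0$. Provided this per-tree gain dominates the combinatorial growth in the number of paired trees of size $n$, the full remainder converges absolutely and is $o_{\ell_k^\infty}(t/T_{\mathrm{kin}})$, giving exactly \eqref{approx2}.

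In the interaction picture, \eqref{NLS} reads
\begin{equation*}
i\partial_t a_k = \alpha \sum_{\substack{k_1-k_2+k_3=k \\ k_j \in \Z^d_L}} e^{it\Omega(k,k_1,k_2,k_3)}\, a_{k_1}\overline{a_{k_2}}a_{k_3}, \qquad \Omega = |k_1|_\beta^2 - |k_2|_\beta^2 + |k_3|_\beta^2 - |k|_\beta^2.
\end{equation*}
Iterating Duhamel writes $a_k(t) = \sum_{n\geq 0}\mathcal J_n(k,t)$, where $\mathcal J_n$ is an $n$-fold time integral of a product of $2n+1$ copies of $a(0)$ against a phase $\exp(i\sum_j s_j \Omega_j)$, with frequencies constrained by a ternary tree of depth $n$ and a prefactor $\alpha^n$. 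Inserting $a_k(0)=\sqrt{n_{\mathrm{in}}(k)}\eta_k$ and forming $\mathbb E|a_k(t)|^2$ collapses the double series into a sum over triples $(\mathcal T,\mathcal T',\mathcal P)$, where $\mathcal P$ is a pairing of the $4n+2$ leaves of $\mathcal T\sqcup\mathcal T'$; Wick's rule (or its random-phase analogue) kills every unpaired configuration. The unique principal ``ladder'' pairing at $n=1$ contributes, after converting the lattice sums to Riemann integrals and applying the oscillatory identity $\abs{\int_0^t e^{is\Omega}ds}^2/t \to 2\pi\delta(\Omega)$ in the weak-nonlinearity limit, exactly $(t/T_{\mathrm{kin}})\mathcal K(n_{\mathrm{in}})(k)+o(t/T_{\mathrm{kin}})$.

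The core of the proof is the uniform estimate for all non-principal paired trees. I would bound each triple $(\mathcal T,\mathcal T',\mathcal P)$ by the product of three quantities: the coupling $\alpha^{2n}$; a combinatorial count of frequencies $(k_\ell)\in(\Z^d_L)^{*}$ compatible with momentum conservation at each of the $2n$ internal vertices and with the identifications imposed by $\mathcal P$; and an oscillatory gain from the time integrations. The counting is the arithmetic heart of the argument, relying on lattice point estimates on the quadric varieties $\{\Omega_j=0\}$ (using $d\geq 2$, and removing a Lebesgue null set of $\beta$ where needed to avoid small-denominator degeneracies). The oscillation is extracted by integration by parts in each $\Omega_j$, combined with normal-form resummations of precisely those subgraphs of $\mathcal P$ that create small resonance denominators. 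The two favorable exponents $\alpha \sim L^{-\varepsilon+}$ and $\alpha \sim L^{-1-\varepsilon/2+}$ are precisely the two regimes in which the balance between $\alpha^{2n}$, the counting factor, and the oscillatory gain produces the required $C^n L^{-\theta n}(t/T_{\mathrm{kin}})$ bound uniformly in $n$; for other $\alpha\sim L^{-\gamma}$ specific diagrams identified in the abstract violate this balance and generate absolute divergences past times $T_*\ll T_{\mathrm{kin}}$.

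The main obstacle is the per-tree estimate above for ``degenerate'' paired trees: those whose lattice counting factor is nearly maximal while their oscillatory gain is minimal. Such diagrams must be organized into a hierarchy of classes reflecting the combinatorial relationship between the topology of $\mathcal P$ and the trees $\mathcal T,\mathcal T'$ (nested vs.\ overlapping pairings, paired vs.\ crossing subtrees, etc.), and each class requires its own arithmetic and oscillatory analysis. A secondary difficulty is carrying every bound uniformly in $k$, since the conclusion is in $\ell_k^\infty$; this forces the $k$-dependence to be tracked carefully throughout. Matching these per-tree estimates against the $C^n$ growth in the number of paired trees, uniformly over all $n$, is what pins the two scaling laws to the specific exponents $-\varepsilon+$ and $-1-\varepsilon/2+$ stated in the theorem.
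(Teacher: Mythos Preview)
Your proposal captures the spirit of the formal derivation but has a genuine gap at the foundational level: you treat the identity $a_k(t)=\sum_{n\ge 0}\mathcal J_n(k,t)$ as given and then work entirely at the level of expectations via Wick pairings. But nothing in your outline establishes that the solution $u(t)$ actually \emph{exists} on the long time interval $[0,L^{-\varepsilon}T_{\mathrm{kin}}]$, or that it equals this series. Bounding each $\mathbb E[\mathcal J_{\mathcal T}\overline{\mathcal J_{\mathcal T'}}]$ by $C^nL^{-\theta n}(t/T_{\mathrm{kin}})$ controls a formal double sum, not $\mathbb E|\widehat u(t,k)|^2$. The paper resolves this by truncating the expansion at a \emph{finite} order $N$, writing $a=\mathcal J_{\le N}+\mathcal R_{N+1}$, and closing a contraction-mapping argument for the remainder $\mathcal R_{N+1}$. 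The crucial new ingredient---absent from your proposal---is a sharp $\ell^2\to\ell^2$ bound on the random linear operator $v\mapsto\mathcal{IW}(\mathcal J_{\mathcal T_1},\mathcal J_{\mathcal T_2},v)$, obtained by a high-order $TT^*$ argument (computing $(\mathcal L\mathcal L^*)^D$ for large $D$ and recognizing its kernel as a single large tree). This is what makes the ``propagation of chaos'' rigorous; your Wick-calculus approach is precisely the heuristic step the paper is designed to justify.

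There are also smaller issues. You omit the Wick ordering $w=e^{-2i\lambda^2 M_0 t}u$, without which the worst self-interaction term prevents reaching the stated timescale. Your description of the per-tree estimate (``integration by parts in each $\Omega_j$'' and ``normal-form resummations'') does not match what is actually needed: the paper's mechanism is a combinatorial counting algorithm on paired, colored trees (Proposition~\ref{countingbd}) combined with large-deviation bounds for multilinear Gaussians, not oscillatory integration by parts. Finally, you suggest removing a null set of $\beta$, but Theorem~\ref{thm1} holds for \emph{arbitrary} $\beta$; the point is that both scaling laws force $T\ll L^2$, so rationality of the torus never enters.
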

 
We remark that, in the time interval of approximation above, the right hand sides of \eqref{approximation} and \eqref{approx2} are equivalent. Also note that any type of scaling law of the form $\alpha\sim L^{-s}$, gives an upper bound of $t\leq L^{-\varepsilon}T_{\mathrm{kin}}\sim L^{2s-\varepsilon}$ for the times considered. Consequently, for the two scaling laws in the above theorem, the time $t$ always satisfies $t\ll L^{2}$, and it is for this reason that the rationality-type of the torus is not relevant. As will be clear below, no similar results can hold for $t\gg L^2$ in the case of a rational torus\footnote{Even at the endpoint case where $t\sim L^2$, the number theoretic components of the proof would yield different answers than those anticipated by the theory. See Section \ref{number theory section}.}, as this would require rational quadratic forms to be equidistributed on scales $\ll1$, which is impossible. However, if the aspect ratios $\beta$ are assumed to be generically irrational, then one can access equidistribution scales that are as small as $L^{-d+1}$ for the resulting irrational quadratic forms \cite{Bourgain, BGHS2}. This allows us to consider scaling laws for which $T_{\mathrm{kin}}$ can be as big as $L^{d-}$ on generically irrational tori.
  
  \medskip

  \begin{rem}
The $\varepsilon\to 0$ limit of the first scaling law $\alpha\sim L^{-\varepsilon}$ effectively corresponds to taking the large box limit $L\to \infty$ before taking the weak nonlinearity limit $\alpha\to 0$. This is the usual order taken in formal derivations (see \cite{Nazarenko} for instance). 
 \end{rem}

The following theorem covers general scaling laws, including the ones that can only be accessed for the generically irrational torus. By a simple calculation of exponents, we can see that it implies Theorem \ref{thm1}:
\begin{thm}\label{thm2}
With the same assumptions as in Theorem \ref{thm1}, we impose the following conditions on $(\alpha, L, T)$ for some $\delta>0$: 
\begin{equation}\label{adm1}T\leq
\left\{
\begin{split}&L^{2-\delta},&&\mathrm{\ if\ }\beta_i\mathrm{\ arbitrary},\\
&L^{d-\delta},&&\mathrm{\ if\ }\beta_i\mathrm{\ generic},
\end{split}\right.\qquad\alpha\leq \left\{
\begin{split}&L^{-\delta}T^{-1},&\mathrm{\ if\ }T&\leq L,\\
&L^{-1-\delta},&\mathrm{\ if\ }L&\leq T\leq L^2,\\
&L^{1-\delta}T^{-1},&\mathrm{\ if\ } T&\geq L^2.
\end{split}
\right.
\end{equation}
Then \eqref{approx2} holds for all $L^{\delta} \leq t \leq T$.
\end{thm}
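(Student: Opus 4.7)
The plan is a Feynman-diagram analysis of the Duhamel expansion of \eqref{NLS}. Passing to the interaction picture $a_k(t)=e^{-it|k|_\beta^2}\widehat u(t,k)$ recasts the equation as $i\partial_t a_k = \alpha L^{-d}\sum_{k_1-k_2+k_3=k}e^{it\Omega}a_{k_1}\overline{a_{k_2}}a_{k_3}$, with resonance modulus $\Omega=|k_1|_\beta^2-|k_2|_\beta^2+|k_3|_\beta^2-|k|_\beta^2$. Iterating Picard's formula $N$ times produces $a_k(t)=\sum_{\mathcal T}\mathcal J_{\mathcal T}(t,k)+\mathrm{err}_N$, a sum indexed by ternary trees $\mathcal T$ of depth $\le N$, where each $\mathcal J_{\mathcal T}$ is a time-ordered integral of $e^{i\sum_v s_v\Omega_v}$ over internal vertex times, with leaves carrying the random data $\sqrt{n_{\mathrm{in}}(k_\ell)}\,\eta_{k_\ell}$. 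Expanding $\mathbb E|\widehat u(t,k)|^2=\mathbb E(a_k\overline{a_k})$ and applying Wick's theorem (Gaussian case) or the analogous moment identity for the uniform-on-circle law pairs the $2n+2$ leaves across two trees and reduces the problem to a sum over \emph{paired trees}, each contributing a finite-dimensional oscillatory lattice sum over $k_j\in\Z^d_L$. The lower bound $t\geq L^\delta$ ensures that truncating at a fixed large order $N=N(\delta)$ leaves a negligible tail, and the remainder splits into (i) identifying the leading order and (ii) bounding the residual paired-tree sum.

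For the leading order, a direct bookkeeping shows that only the trivial pairing (producing $n_{\mathrm{in}}(k)$) and a specific family of size-one paired trees yield a non-trivial contribution of size $\alpha^2 t$; together they produce exactly $(t/T_{\mathrm{kin}})\mathcal K(n_{\mathrm{in}})(k)$ in the large-$L$ limit. Quantitatively, the Riemann-sum approximation of $L^{-3d}\sum_{k_1-k_2+k_3=k}\cdots\,t\,\mathrm{sinc}(t\Omega/2)$ by its continuous counterpart weighted by $\delta_{\mathbb R}(\Omega)$ requires equidistribution of the quadratic form $\Omega$ on scales $\lesssim t^{-1}$. On a rational torus, classical Gauss-sum methods only give equidistribution down to scale $L^{-2}$, forcing $T\le L^{2-\delta}$; for generically irrational $\beta$, the refined equidistribution from \cite{Bourgain, BGHS2} reaches scale $L^{-(d-1)}$, allowing $T\le L^{d-\delta}$. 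This is precisely the dichotomy of the top line of \eqref{adm1}.

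The analytic crux is bounding the sum over paired trees of size $\ge 2$ by $o_{L\to\infty}(t/T_{\mathrm{kin}})$. A schematic per-tree estimate should take the form $|\mathcal J_{\mathcal T}|\lesssim C^n\,(\alpha^2 T\, L^{-\theta})^n\cdot(t/T_{\mathrm{kin}})$, obtained by allocating at each internal vertex either a lattice-counting bound from equidistribution on the near-quadric $\{|\Omega_v|\lesssim T^{-1}\}$, or a $T^{-1}$ gain from non-stationary phase in the time integration. The three subcases of the second line of \eqref{adm1} correspond exactly to the regimes $T\le L$, $L\le T\le L^2$, $T\ge L^2$ in which one or the other of these bounds becomes the binding constraint, and the hypotheses are calibrated so that $\alpha^2 T\,L^{-\theta}\le L^{-\theta'}$ throughout. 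The principal obstacle is that the number of paired trees of size $n$ grows factorially, so a per-tree bound alone will not sum. This is overcome by a renormalization step: "bubble" subdiagrams whose pairings would otherwise generate divergent $L$-powers must be resummed into a propagator correction (or shown to cancel identically for the uniform-on-circle ensemble), after which the residual diagrams admit a combinatorial skeleton classification with only exponentially many competitive topologies of each size. Carrying out this renormalization while preserving the $L^{-\theta}$-per-vertex gain uniformly across all three regimes of \eqref{adm1}, and matching the power counting with the number-theoretic inputs on both rational and generic tori, constitutes the core technical difficulty of the proof.
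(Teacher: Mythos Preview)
Your outline captures the correct overall architecture (Duhamel iteration into trees, identification of the leading term via equidistribution of $\Omega$, and the dichotomy rational vs.\ generic $\beta$), and your explanation of why the first line of \eqref{adm1} arises is accurate. However, two central technical points are either missing or misdirected.

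\textbf{The remainder is not free.} You write that ``truncating at a fixed large order $N=N(\delta)$ leaves a negligible tail.'' This is the heart of the matter, not a footnote. The remainder $\mathcal R_{N+1}=a-\sum_{n\le N}\mathcal J_n$ satisfies a \emph{nonlinear} equation of the schematic form $\mathcal R_{N+1}=\mathcal J_{\sim N}+\mathcal L(\mathcal R_{N+1})+\text{(quadratic, cubic)}$, and closing a contraction requires showing that the \emph{random linear operator} $\mathcal L:v\mapsto \mathcal I\mathcal W(\mathcal J_{\mathcal T_1},\mathcal J_{\mathcal T_2},v)$ has small $h^{s,b}\to h^{s,b}$ norm. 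The paper obtains this by a high-order $TT^*$ argument: one studies $(\mathcal L\mathcal L^*)^D$ for $D\gg 1$, observes that its kernel is itself a tree sum of the same type (for a large auxiliary tree built by iterated grafting of $\mathcal T_1,\mathcal T_2$), applies the same counting machinery, and then takes $2D$-th roots to kill the residual $L^d$ loss. Nothing in your sketch addresses this; without it the Picard expansion does not converge on the time interval $[0,T]$ and there is no control of $\mathrm{err}_N$.

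\textbf{The combinatorial difficulty is misidentified.} Your proposal to control $\mathbb E|\widehat u|^2$ by Wick-expanding into paired trees and then resumming ``bubble'' subdiagrams to beat a factorial count is not what the paper does, and the renormalization step you describe is speculative. The paper instead proves an $L$-certain \emph{pointwise} bound $\sup_k\|(\mathcal J_{\mathcal T})_k\|_{h^b}\le L^\theta\rho^n$ for each individual tree of scale $n$ (with $\rho$ defined piecewise as in \eqref{defrho}, calibrated exactly so that the second line of \eqref{adm1} forces $\rho\le L^{-\delta}$), via a large-deviation reduction to a lattice counting problem, followed by an algorithmic inductive counting bound (Proposition~\ref{countingbd}). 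Since the number of ternary trees of scale $n$ is only exponential, the sum over trees is immediate; the pairings are absorbed inside the large-deviation/counting step, not summed externally. The cross terms $\mathbb E_\Gamma[\overline{(\mathcal J_{n_1})_k}(\mathcal J_{n_2})_k]$ in the expansion of the second moment are then bounded by Cauchy--Schwarz from these $h^b$ estimates, avoiding the factorial issue altogether. The only genuine renormalization in the paper is the elementary Wick ordering $w=e^{-2i\lambda^2 M_0 t}u$ removing the mass phase, which you do not mention but is needed to prevent a trivial loss at the first step.
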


  \begin{figure}[h!]
  \includegraphics[width=.58\linewidth]{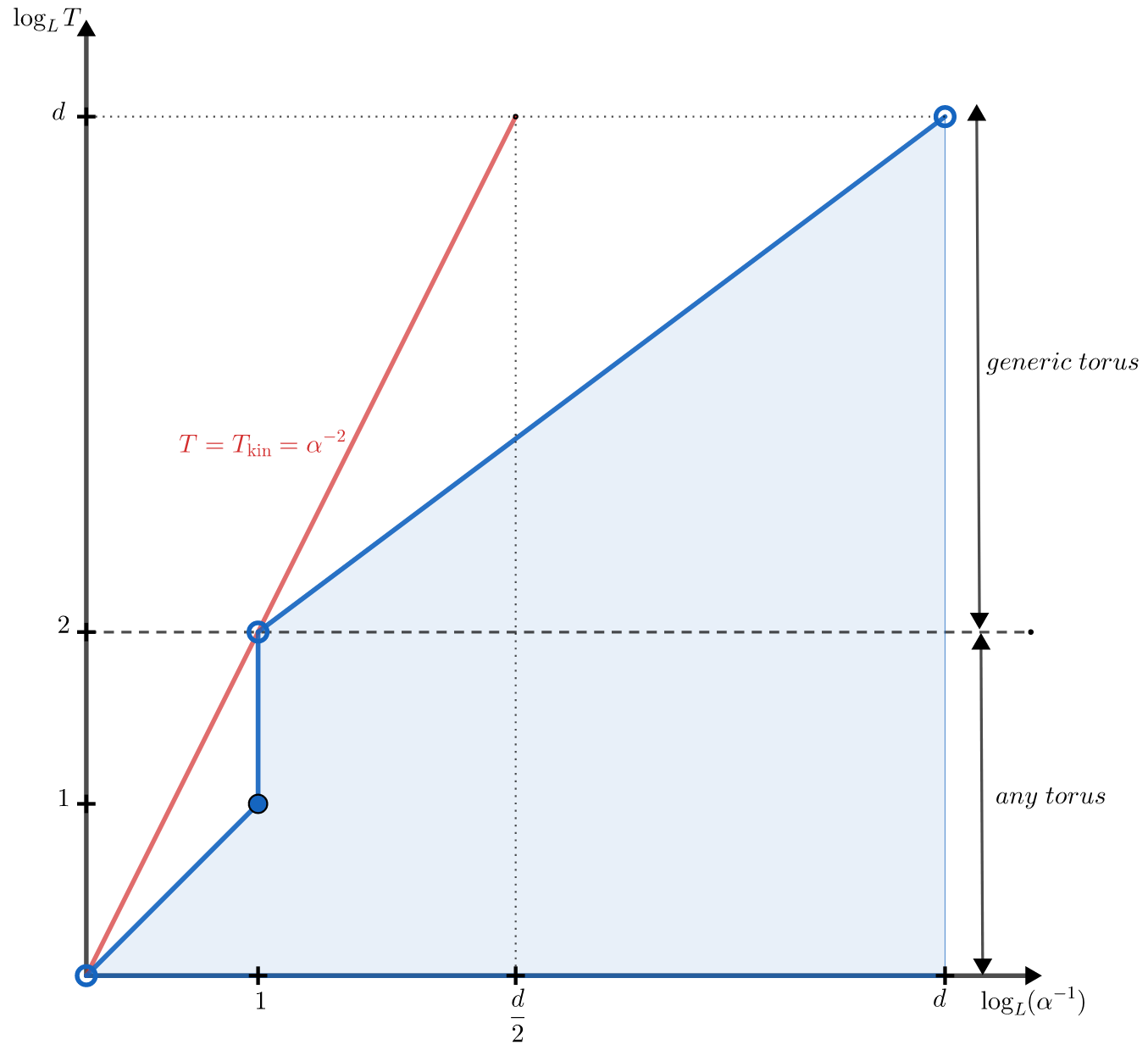}
  \caption{Admissible range for $(\alpha, L, T)$ in the $(\log_L (\alpha^{-1}),\log_L T)$ plot when $d\geq 3$. The blue region is the range of our theorem (up to $\varepsilon$ endpoint accuracy). The red line denotes the case when $T=T_{\mathrm{kin}}=\alpha^{-2}$, which our blue region touches at two points corresponding to $T\sim 1$ and $T\sim L^{2}$.}
  \label{fig:admissible}
\end{figure}

It is best to read this theorem in terms of $\left(\log_L (\alpha^{-1}),\log_L T\right)$ plot in Figure \ref{fig:admissible}. The kinetic conjecture corresponds to justifying the approximation in \eqref{approximation} up to timescales $T\lesssim T_{\mathrm{kin}}=\alpha^{-2}$. As we shall explain below, timescale $T\sim T_{\mathrm{kin}}$ represents a \emph{critical scale} for the problem from a probabilistic point of view. This is depicted by the red line of this figure, and the region below this line corresponds to a \emph{probabilistically-subcritical} regime (see Section \ref{probsc}). The shaded blue region corresponds to the $(\alpha, T)$ region in Theorem \ref{thm2} neglecting $\delta$ losses. This region touches the line $T=\alpha^{-2}$ at the two points corresponding to $(\alpha^{-1}, T)=(1, 1)$ and $(L, L^2)$, while the two scaling laws of Theorem \ref{thm1}, where $(\alpha^{-1},T)\sim(L^{\varepsilon-},L^{\varepsilon-})$ and $(\alpha^{-1},T)\sim (L^{1+\frac{\varepsilon}{2}-},L^{2-})$, approach these two points when $\varepsilon$ is small.

The above results rely on a diagrammatic expansion of the NLS solution in Feynman diagrams akin to a Taylor expansion. The shaded blue region depicting the result of Theorem \ref{thm2} corresponds to the cases when such a diagrammatic expansion is absolutely convergent for very large $L$. In the complementary region between the blue region and the line $T=T_{\textrm{kin}}$, we show that some (arbitrarily high-degree) terms of this expansion do not converge to 0 as their degree goes to $\infty$, which means that the diagrammatic expansion cannot converge absolutely in this region. As such, the only way for the kinetic conjecture to be true in the scaling regimes not included in Theorem \ref{thm1} is for those terms to exhibit a highly nontrivial cancellation which would make the series converge conditionally but not absolutely. 

Finally, we remark on the restriction in \eqref{adm1}. The upper bounds on $T$ on the left are necessary from number theoretic considerations: Indeed, if $T\gg L^2$ for a rational torus, or if $T\gg L^d$ for an irrational one, the exact resonances of the NLS equation dominate the quasi-resonant interactions that lead to the kinetic wave equation. As such, one should not expect the kinetic description to hold in those ranges of $T$ (see Lemma \ref{counting} and Section \ref{proof of theorem}). The second set of restrictions in \eqref{adm1} correspond exactly to the requirement that size of the Feynman diagrams of degree $n$ can be bounded by $\rho^n$ with some $\rho\ll 1$. 

\begin{rem}[Admissible scaling laws]\label{scaling rem}
The above restrictions on $T$ impose the limits of the admissible scaling laws, in which $\alpha\to 0$ and $L \to \infty$, for which the kinetic description of the longtime dynamics can appear. Indeed, since $T_{\mathrm{kin}}=\alpha^{-2}$, then the \emph{necessary} (up to $L^\delta$ factors) restrictions $T\ll L^{2-\delta}$ (resp. $T\ll L^{d-\delta}$) on the rational (resp. irrational) torus mentioned above, imply that one should only expect the above kinetic description in the regime where $\alpha \gtrsim L^{-1}$ (resp. $\gtrsim L^{-d/2}$). In other words, the kinetic description requires the nonlinearity to be weak, but not too weak! In the complementary regime of very weak nonlinearity, the exact resonances of the equation dominate the quasi-resonances, a regime referred to as discrete wave turbulence (see \cite{LN,Kart, Nazarenko}), in which different effective equations, like the (CR) equation in \cite{FGH, BGHS1}, can arize. 
\end{rem} 

\subsection{Ideas of the proof} As Theorem \ref{thm1} is a consequence of Theorem \ref{thm2}, we will focus on Theorem \ref{thm2} below. The proof of Theorem \ref{thm2} contains three components: (1) A long-time well-posedness result, where we expand the solution to (\ref{NLS}) into Feynman diagrams for sufficiently long time, up to a well-controlled error term; (2) Computation of $\mathbb E|\widehat u_k(t)|^2$ ($k \in \Z^d_L$) using this expansion, where we identify the leading terms and control the remainders; and (3) A number theoretic result that justifies the large box approximation, where we pass from the sums appearing in the expansion in component (2) to the integral appearing on the right-hand side of \eqref{WKE}.
 
The main novelty of this work is in the first component, which is the hardest. The second component follows similar lines to those in \cite{BGHS2}. Regarding the third component, the main novelty of this work is to complement the number theoretic results in \cite{BGHS2} (which only dealt with the generically irrational torus) by the cases of general tori (in the admissible range of time $T\ll L^2$). This provides an essentially full (up to $L^\varepsilon$ losses) understanding of the number theoretic issues arising in wave turbulence derivations for (NLS). As such, we will limit this introductory discussion to the first component. 
\subsubsection{The scheme and probabilistic criticality}\label{probsc} Though technically involved, the basic idea of the long-time well-posedness argument is in fact quite simple. Starting from (\ref{NLS}) with initial data (\ref{wellprepared}), we write the solution as
\begin{equation}
\label{expansion1}u=u^{(0)}+\cdots+u^{(N)}+\mathcal R_{N+1},
\end{equation} where $u^{(0)}=e^{-it\Delta_\beta}u_{\mathrm{in}}$ is the linear evolution, $u^{(n)}$ are iterated self-interactions of the linear solution $u^{(0)}$ that appear in a formal expansion of $u$, and $\mathcal R_{N+1}$ is a sufficiently regular remainder term.

Since $u^{(0)}$ is a linear combination of independent random variables, and each $u^{(n)}$ is a multilinear combination, each of them will behave strictly better (both linearly and nonlinearly) than its deterministic analogue (i.e. with all $\eta_k=1$). This is due to the well-known large deviation estimates, which yield a `square root' gain coming from randomness, akin to the Central Limit Theorem (for instance $\|u_{\mathrm{in}}\|_{L^\infty}$ is bounded by $L^{-d/2}\cdot\|u_{\mathrm{in}}\|_{L^2}$ in the probabilistic setting, as opposed to $1\cdot \|u_{\mathrm{in}}\|_{L^2}$ deterministically by Sobolev embedding, assuming compact Fourier support). This gain leads to a new notion of criticality for the problem, which can be defined\footnote{One can interpret the usual scaling criticality for \eqref{NLS} in the same way: It corresponds to the minimum regularity $s$ for which the first iterate of an $H^s-$normalized rescaled bump function like $N^{-s+\frac{d}{2}}\phi(Nx)$ is better bounded than the linear solution (comparing $|u|^pu$ to $\Delta u$ for such data gives the critical regularity $s_{\mathrm{critical}}=\frac{d}{2}-\frac{2}{p}$).} as the edge of the regime of $(\alpha, T)$ for which the iterate $u^{(1)}$ is better bounded than the iterate $u^{(0)}$. It is not hard to see that, $u^{(1)}$ can have size up to $O(\alpha\sqrt{ T})$ (in appropriate norms) compared to the $O(1)$ size of $u^{(0)}$ (See for example \eqref{lwp0-1} for $n=1$). This justifies the notion that $T\sim T_{\mathrm{kin}}=\alpha^{-2}$ corresponds to the \emph{probabilisitically-critical scaling}, whereas the timescales $T\ll T_{\mathrm{kin}}$ are \emph{subcritical}\footnote{It may be supercritical under the \emph{deterministic scaling}. See \cite{DNY2}, Section 1.3 for a discussion of these notions in the more customary context of Sobolev regularity of local well-posedness in deterministic versus probabilistic settings.}.

As it happens, a certain notion of criticality might not capture all the subtleties of the problem. As we shall see, some higher order iterates $u^{(n)}$ won't be better bounded than $u^{(n-1)}$ in the full subcritical range $T\ll \alpha^{-2}$ postulated above, but only in a subregion thereof. This is what defines our admissible blue region in Figure \ref{fig:admissible} above.

We should mention that the idea of utilizing the gain from randomness goes back to Bourgain \cite{Bourgain96} (in the random data setting) and Da Prato-Debussche \cite{DaPDe} (later, in the SPDE setting). They first noticed that the ansatz $u=u^{(0)}+\Rm$ allows one to put the remainder $\Rm$ in a higher regularity space than the linear term $u^{(0)}$. Their idea has since been applied to many different situations, see for example \cite{BB,BT,CO,Deng,DLM,KM,NS}, though most of these works either involve only the first order expansion (i.e. $N=0$), or involve higher order expansions with only suboptimal bounds (see for example \cite{BOP}). To the best of our knowledge, this is the first work where the sharp bounds for these $u^{(j)}$ terms are obtained to arbitrarily high order (at least in the dispersive setting).
\begin{rem}There are two main reasons why the high-order expansion (\ref{expansion1}) gives the sharp time of control, as opposed to previous works. The first one is because we are able to obtain sharp estimates for the terms $u^{(j)}$ with arbitrarily high order, which is not known previously due to the combinatorial complexity associated with trees (see Section \ref{treeintro} below).

The second reason is more intrinsic. In higher-order versions of the original Bourgain-Da Prato-Debussche approach, it usually stops improving in regularity beyond a certain point, due to the presence of the \emph{high-low} interactions (heuristically, the gain of powers of low frequency does not transform to the gain in regularity). This is a major difficulty in random data theory, and in recent years a few methods have been developed to address it, including the \emph{regularity structure} \cite{Hairer}, the \emph{para-controlled calculus} \cite{GIP}, and the \emph{random averaging operators} \cite{DNY2}. Fortunately, in the current problem this issue is absent, since the well-prepared initial data (\ref{wellprepared}) bounds the high-frequency components (where $|k|\sim 1$) and low-frequency components (where $|k|\sim L^{-1}$) uniformly, so the high-low interaction is simply controlled in the same way as the high-high interaction, allowing one to gain regularity indefinitely as the order increases.

\end{rem}
\subsubsection{Sharp estimates of Feynman trees}\label{treeintro} We start with the estimate for $u^{(n)}$. As is standard with the cubic nonlinear Schr\"{o}dinger equation, we first perform the Wick ordering by defining
\[w:=e^{-2i\lambda^2M_0t}\cdot u,\quad M_0:=\fint_{\mathbb{T}_L^d}|u|^2.\] Note that $M_0$ is essentially the mass which is conserved. Now $w$ satisfies the renormalized equation
\begin{equation}\label{neweqnintro}i\partial_tw-\Delta_\beta w+\lambda^2\bigg(|w|^2-2\fint_{\mathbb{T}_L^d}|w|^2\bigg)w=0,\end{equation} and $|\widehat{w_k}(t)|^2=|\widehat{u_k}(t)|^2$. This gets rid of the worst resonant term, which would otherwise lead to a suboptimal timescale.

Let $w^{(n)}$ be the $n$-th order iteration of the nonlinearity in (\ref{neweqnintro}), corresponding to the $u^{(n)}$ in (\ref{expansion1}). Since this nonlinearity is cubic, by induction it is easy to see that $w^{(n)}$ can be written (say in Fourier space) as a linear combination of terms\footnote{We will first perform rescaling and conjugation by the linear Schr\"{o}dinger flow, see Section \ref{reductions}; for simplicity we still use $\Jm_\Tm$ to denote these terms.} $\Jm_\Tm$, where $\Tm$ runs over all ternary trees with exactly
$n$ branching nodes (we will say it has \emph{scale} $\mathfrak s(\Tm)=n$). After some further reductions, the estimate for $\Jm_\Tm$ can be reduced to the estimate for terms of form\footnote{In reality one may have coefficients $m=m(k,k_1,\cdots,k_{2n+1})$ in the expression of $M_k$ in (\ref{multgauss}), but one can always reduce to the form of (\ref{multgauss}) by restricting to the level sets of $m$.}
\begin{equation}\label{multgauss}\Sigma_k:=\sum_{(k_1,\cdots,k_{2n+1})\in S}\eta_{k_1}^\pm\cdots \eta_{k_{2n+1}}^\pm, \qquad (\eta_k^+,\eta_k^{-}):=(\eta_k(\omega), \overline{\eta_k}(\omega)),
\end{equation} where $\eta_k(\omega)$ is as in \eqref{wellprepared}, $(k_1,\cdots,k_{2n+1})\in(\mathbb{Z}_L^d)^{2n+1}$, $S$ is a suitable finite subset of $(\mathbb{Z}_L^d)^{2n+1}$, and the $(2n+1)$ subscripts correspond to the $(2n+1)$ leaves of $\Tm$; see Definition \ref{tree1} and Figure \ref{fig:Trees}.

To estimate $M_k$ defined in (\ref{multgauss}) we invoke the standard large deviation estimates, see Lemma \ref{largedev}, which essentially asserts that $|\Sigma_k|\lesssim (\#S)^{1/2}$ with overwhelming probability, provided that there is no \emph{pairing} in $(k_1,\cdots,k_{2n+1})$, where a pairing $(k_i,k_j)$ means $k_i=k_j$ and the signs of $\eta_{k_i}$ and $\eta_{k_j}$ in (\ref{multgauss}) are the opposite. Moreover in the case of a pairing $(k_i,k_j)$ we can essentially replace $\eta_{k_i}^\pm \eta_{k_j}^\pm=|\eta_{k_i}|^2\approx 1$, so in general we can bound, with overwhelming probability, that
\[|\Sigma_k|^2\lesssim\sum_{(\mathrm{unpaired}\,\,\!k_i)}\bigg(\sum_{\substack{(\mathrm{paired}\,\,\!k_i):\\(k_1,\cdots,k_{2n+1})\in S}}1\bigg)^2\lesssim\sum_{(k_1,\cdots,k_{2n+1})\in S}1\cdot\sup_{(\mathrm{unpaired}\,\,\!k_i)}\sum_{\substack{(\mathrm{paired}\,\,\!k_i):\\(k_1,\cdots,k_{2n+1})\in S}}1.\] It thus suffices to bound the number of choices for $(k_1,\cdots,k_{2n+1})$ given the pairings, as well as the number of choices for the paired $k_j$'s given the unpaired $k_j$'s.

In the no pairing case such counting bounds are easy to prove, since the set $S$ is well adapted to the tree structure of $\Tm$; what makes the counting nontrivial is the pairings, especially the pairings between leaves that are faraway or from different levels (see Figure \ref{fig:Trees2}, where a pairing is depicted by an extra link between the two leaves). Nevertheless, we have developed a counting algorithm that specifically deals with the given pairing structure of $\Tm$, and ultimately leads to sharp counting bounds and consequently sharp bounds for $\Sigma_k$. See Proposition \ref{countingbd}.

\subsubsection{An $\ell^2$ operator norm bound} In contrast to the tree terms $\mathcal J_\mathcal T$ above, the remainder term $\Rm_{N+1}$ has no explicit random structure. Indeed, the only way it feels the ``chaos" of the initial data is through the equation it satisfies, which looks like (in integral form and in spatial Fourier variables)
$$
\Rm_{N+1}=\Jm_{\sim N}+\Lm(\Rm_{N+1}) +\Qm(\Rm_{N+1})+\Cm(\Rm_{N+1})
$$
where $\Jm_{\sim N}$ is a sum of Feynman trees $\Jm_\mathcal T$ (described above) of scale $\mathfrak s (\mathcal T)\sim N$, and $\Lm, \Qm,$ and $\Cm$ are respectively linear, bilinear, and trilinear operator in $\Rm_{N+1}$. The main point here is that one would like to propagate the estimates on $\Jm_{\sim N}$ discussed above to $\Rm_{N+1}$ itself; this is how we make rigorous the so-called ``propagation of chaos or quasi-gaussianity" claims that are often adopted in formal derivations.  

Since we're bootstrapping a smallness estimate on $\Rm_{N+1}$, any quadratic and cubic form of $\Rm_{N+1}$ will be easily bounded. As such, it suffices to propagate the bound for the term $\Lm(\Rm_{N+1})$, which reduces to bounding the $\ell^2\to \ell^2$ operator norm for the linear operator $\Lm$. By definition, the operator $\Lm$ will have the form $v\mapsto \mathcal{IW}(\Jm_{\mathcal T_1}, \Jm_{\mathcal T_2}, v)$ where $\mathcal{I}$ is the Duhamel operator, $\mathcal{W}$ is the trilinear form coming from the cubic nonlinearity, and $\Jm_{\mathcal T_1}, \Jm_{\mathcal T_2}$ are trees of scale $\leq N$; thus in Fourier space it can be viewed as a matrix with \emph{random coefficients}. The key to obtaining the sharp estimate for $\Lm$ is then to exploit the cancellation coming from this randomness, and the most efficient way to do this is via the $TT^*$ method.

In fact, the idea of applying $TT^*$ method to random matrices has already been used in Bourgain \cite{Bourgain96}. In that paper one is still far above (the probabilistic) criticality, so applying the $TT^*$ method once already gives adequate control. In the current case, however, we are aiming at obtaining sharp estimates, so applying $TT^*$ once will not be sufficient.

The solution is thus to apply $TT^*$ sufficiently many (say $D\gg 1$) times, which leads to the analysis of the kernel of the operator $(\Lm\Lm^*)^D$. At first sight this kernel seems to be a complicated multilinear expression which is difficult to handle; nevertheless we make one key observation, namely this kernel can essentially be recast in the form of (\ref{multgauss}), for some large auxiliary tree $\Tm=\Tm^D$, which is obtained from a single root node by attaching copies of the trees $\mathcal T_1$ and $\mathcal T_2$ successively, for a total of $2D$ times (see Figure \ref{fig:manytrees}). With this observation, the arguments in the previous section then lead to sharp bounds of the kernel of $(\Lm\Lm^*)^D$, up to some loss that is a power of $L$ independent of $D$; upon taking the $1/(2D)$ power and choosing $D$ sufficiently large, this power will become negligible and imply the sharp bound for the operator norm of $\Lm$. See Section \ref{operatorsec}.

\subsubsection{Sharpness of estimates}\label{sharpness} Finally we remark that, the estimates we prove for $\Jm_\Tm$ are sharp up to some finite power of $L$ (independent of $\Tm$). More precisely, from Proposition \ref{lwp1} we know that for any ternary tree $\Tm$ of scale $n$ and possible pairing structure (see Definition \ref{dfpairing}), with overwhelming probability 
\begin{equation}\label{upperbd}\sup_k\|(\Jm_\Tm)_k\|_{h^{b}}\leq L^{0+}\rho^n,
\end{equation} where $\rho$ is some quantity depending on $\alpha$, $L$ and $T$ (see (\ref{defrho})), $k$ is the spatial Fourier variable and $h^b$ is a time-Sobolev norm defined in (\ref{hb}); on the other hand, we will show that that for some particular choice of trees $\Tm$ of scale $n$, and some particular choice of pairings, with high probability
\begin{equation}\label{lowerbd}\sup_k\|(\Jm_\Tm)_k\|_{h^{b}}\geq L^{-d}\rho^n.
\end{equation}The timescale $T$ of Theorem \ref{thm2} is the largest that makes $\rho\ll 1$; thus if one wants to go beyond $T$ in cases other than Theorem \ref{thm1}, it would be necessary to address the divergence of (\ref{lowerbd}) with $\rho\gg 1$ by exploiting the cancellation between different tree terms and/or different pairing choices. See Section \ref{badterm}.
\subsection{Organization of the paper} In Section \ref{trees and LTE}, we explain the diagrammatic expansion of the solution into Feynman trees, and state the a priori estimates on such trees and remainder term, which yield the long-time existence of such expansions. Section \ref{mainproof} is devoted to the proof of those a priori estimates. In Section \ref{proof of theorem}, we prove the main theorems mentioned above, and in the Section \ref{number theory section}, we prove the needed number theoretic results that allow to replace the highly-oscillating Riemann sums by integrals. 
 
\subsection{Notations}\label{notat} Most notations will be standard. Let $z^+=z$ and $z^-=\overline{z}$. Define $|k|_\beta$ by $|k|_\beta^2=\beta_1k_1^2+\cdots+\beta_dk_d^2$ for $k=(k_1,\cdots,k_d)$. The spatial Fourier series of a function $u: \T_L^d \to \mathbb C$ is defined on $\Z^d_L:=L^{-1}\Z^{d}$ by
\begin{equation}\label{fourierset}
\widehat{u}_k=\int_{\T^d_L} u(x) e^{-2\pi i k\cdot x},\quad \mathrm{\; so \,that \;}\quad u(x)=\frac{1}{L^d}\sum_{k \in \Z^d_L} \widehat{u}_k \,e^{2\pi i k\cdot x}. 
\end{equation}

The temporal Fourier transform is defined by
\[\widetilde{f}(\tau)=\int_{\mathbb{R}}e^{-2\pi it\tau}f(t)\,\mathrm{d}t.\]

{Let $\delta>0$ be fixed throughout the paper.} Let $N$ and $s$ and $b>\frac{1}{2}$ be fixed, such that $N$ and $s$ are large enough, and $b-\frac{1}{2}$ is small enough, depending on $d$ and $\delta$. The quantity $C$ will denote any large absolute constant (which does not depend on $(N,s,b-\frac{1}{2})$), and $\theta$ will denote any small positive constant (which depends on $(N,s,b-\frac{1}{2})$); these may change from line to line. The symbols $O(\cdot)$ and $\lesssim$ etc. will have their usual meanings, with implicit constants depending on $\theta$. Let $L$ be large enough depending on all the above implicit constants. If some statement $S$ involving $\omega$ is true with probability $\geq 1-Ke^{-L^\theta}$ for some constant $K$ (depending on $\theta$), then we say this statement $S$ is $L$-certain.

When a function depends on many variables, we may use notions like
\[f=f(x_i:i\in A,\,y_j:1\leq j\leq m)\] to denote a function $f$ of variables $(x_i:i\in A)$ and $y_1,\cdots,y_m$.

  \section{Tree expansions and long-time existence}\label{trees and LTE}
 \subsection{First reductions}\label{reductions} Let $\widehat{u}_k(t)$ be the Fourier coefficients of $u(t)$, as in (\ref{fourierset}).
 Then with $c_k(t):= e^{2\pi i|k|_\beta^2t} \widehat u_k(t)=(\mathcal F_{\T^d_L} e^{-it\Delta_\beta} u)(k)$, we arrive at the following equation for the Fourier modes:
 \begin{equation}\label{akeqn}
\begin{cases}
 i \dot{c_k} =  \left(\frac{\lambda}{L^{d}}\right)^{2} \sum\limits_{\substack{(k_1,k_2,k_{3}) \in (\mathbb{Z}^d_L)^3 \\ k - k_1 + k_2 -k_3 = 0}} c_{k_1}\overline{c_{k_2}}  c_{k_3} e^{2\pi i \Omega(k_1,k_2,k_3,k)t} \\[3em]
c_k(0) = (c_k)_{\mathrm{in}}=\widehat u_k(0)
\end{cases}
\end{equation}
where 
$
\Omega(k_1,k_2,k_3,k) =|k_1|_\beta^2-|k_2|_\beta^2+|k_3|_\beta^2-|k|_\beta^2.
$
Note that the sum above can be written as
$$
\sum\limits_{\substack{(k_1,k_2, k_{3}) \in (\mathbb{Z}^d_L)^3 \\ k - k_1 + k_2 -k_3 = 0}}=2\sum_{k_1=k}-\sum_{k_1=k_2=k_3} +\sum_{k_1, k_3\neq k}
$$
which allows to write, defining $M=\sum_{k_3} |c_{k_3}|^2$ (which is conserved),
$$
 i \dot{c_k} =  \bigg(\frac{\lambda}{L^{d}}\bigg)^{2} 
\bigg(2M c_k -|c_k|^2c_k+ \sum\limits_{(k_1,k_2, k_{3})}^{\times} c_{k_1}\overline{c_{k_2}}  c_{k_3} e^{2\pi i \Omega(k_1,k_2,k_3,k)t}\bigg).
$$ Here and below $\sum^{\times}$ represents summation under the conditions $k_j\in\mathbb{Z}_L^d$, $k_1-k_2+k_3=k$ and $k\not\in\{k_1,k_3\}$. Introducing $b_k(t)=c_k(t)e^{-2i(L^{-d}\lambda)^{2}Mt}$, we arrive at the following equation for $b_k(t)$:
 \begin{equation}\label{bkeqn}
\begin{cases}
i \dot{b_k} =  \left(\frac{\lambda}{L^{d}}\right)^{2} 
\bigg( -|b_k|^2b_k+ \sum\limits_{(k_1,k_2, k_{3})}^{\times} b_{k_1}\overline{b_{k_2}}  b_{k_3} e^{2\pi i \Omega(k_1,k_2,k_3,k)t}\bigg),\\[3em]
b_k(0) = (b_k)_{\mathrm{in}}=\widehat u_k(0).
\end{cases}
\end{equation}

In Theorem \ref{thm2} we will be studying the solution $u(t)$, or equivalently the sequence $(b_k(t))_{k \in \Z^d_L}$, on a time interval $[0,T]$. It will be convenient, to simplify some notation later, to work on the unit time interval $[0,1]$. For this we introduce the final ansatz
$$
a_k(t)=b_k(Tt),
$$ 
which satisfies the following equation
 \begin{equation}\label{akeqn}
\begin{cases}
i \dot{a_k} =  \left(\frac{\alpha T}{L^{d}}\right)
\bigg( -|a_k|^2a_k+ \sum\limits_{(k_1,k_2, k_{3})}^{\times} a_{k_1}\overline{a_{k_2}}  a_{k_3} e^{2\pi i T\Omega(k_1,k_2,k_3,k)t}\bigg),\\[3em]
a_k(0) =(a_k)_{\mathrm{in}}=\widehat u_k(0).
\end{cases}
\end{equation}
Here we have also used the relation $\alpha =\lambda^2L^{-d}$. Recall the well-prepared initial data (\ref{wellprepared}), which transforms into the initial data for $a_k$,
\begin{equation}\label{initial0}
(a_k)_{\mathrm{in}}=\sqrt{n_{\mathrm{in}}} \cdot\eta_{k}(\omega)
\end{equation}
where $\eta_{k}(\omega)$ are the same as in (\ref{wellprepared}).

\subsection{The tree expansion} Let $\boldsymbol a(t) =(a_k(t))_{k \in \Z^d_L}$ and $\boldsymbol{a}_{\mathrm{in}} =\boldsymbol a(0)$. Let $J=[0,1]$, we will fix a smooth compactly supported cutoff function $\chi$ such that $\chi\equiv 1$ on $J$. Then by (\ref{akeqn}), we know that for $t\in J$ we have
\begin{equation}\label{eqnnewa}\boldsymbol{a}(t)=\chi(t)\boldsymbol{a}_{\mathrm{in}}+\mathcal{IW}(\boldsymbol a, \boldsymbol a, \boldsymbol a)(t)\end{equation} where the Duhamel term is defined by
\begin{equation}\label{Duhamel00}
\mathcal{I}F(t)=\chi(t)\int_0^t\chi(t')F(t')\,\mathrm{d}t',
\end{equation}
\begin{equation}\label{Duhamel0}
\mathcal{W}(\boldsymbol b, \boldsymbol c , \boldsymbol d)_k(t):=-\frac{i\alpha T}{L^{d}}\bigg( -(b_{k}\overline{c_{k}}d_{k})(t)+ \sum\limits_{(k_1,k_2, k_{3})}^{\times} (b_{k_1}\overline{c_{k_2}}  d_{k_3})(t) e^{2\pi i T\Omega(k_1,k_2,k_3,k)t}\bigg).
\end{equation}

Since we will only be studying $\boldsymbol{a}$ for $t\in J$, so from now on \emph{we will replace $\boldsymbol{a}$ by the solution to (\ref{eqnnewa}) for $t\in\mathbb{R}$} (the existence and uniqueness of the latter will be clear from the proof below). We will be analyzing the temporal Fourier transform of this (extended) $\boldsymbol{a}$, so let us first record a formula for $\mathcal{I}$ on the Fourier side.
\begin{lem}\label{duhamfourier} Let $\mathcal{I}$ be defined in (\ref{Duhamel00}), then we have (recall that $\widetilde{G}$ means the temporal Fourier transform of $G$)
\begin{equation}\label{duhamelbound}\widetilde{\mathcal{I}F}(\tau)=\int_{\mathbb{R}}(I_0+I_1)(\tau,\sigma)\widetilde{F}(\sigma)\,\mathrm{d}\sigma,\quad|\partial_{\tau,\sigma}^aI_d(\tau,\sigma)|\lesssim_{a,A}\frac{1}{\langle \tau-d\sigma\rangle^A}\frac{1}{\langle\sigma\rangle}.\end{equation}
\end{lem}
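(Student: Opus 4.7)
The plan is to represent $\mathcal{I}F$ as an integral operator whose kernel in the Fourier variables $(\tau,\sigma)$ can be analyzed by non-stationary phase, and then decompose that kernel into one piece localized near $\tau=0$ (which becomes $I_0$) and another piece localized near $\tau=\sigma$ (which becomes $I_1$). Using Fourier inversion $F(t')=\int\widetilde{F}(\sigma)e^{2\pi i\sigma t'}\,d\sigma$ and Fubini, I would first write
\[
\mathcal{I}F(t)=\int_{\mathbb{R}} K(t,\sigma)\widetilde{F}(\sigma)\,d\sigma,\qquad K(t,\sigma):=\chi(t)\int_0^t\chi(t')e^{2\pi i\sigma t'}\,dt',
\]
and then take the temporal Fourier transform in $t$, which reduces matters to decomposing $\widetilde{K}(\tau,\sigma):=\int K(t,\sigma)e^{-2\pi i\tau t}\,dt$ as $I_0+I_1$ with the claimed bounds.

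For the low-frequency regime $|\sigma|\leq 2$, I would use that $\chi$ is smooth and compactly supported, so $K(\cdot,\sigma)$ is smooth compactly supported in $t$ with all $\sigma$-derivatives uniformly controlled; consequently $\widetilde{K}(\tau,\sigma)$ is Schwartz in $\tau$ uniformly on this range. Since $\langle\sigma\rangle\sim 1$ and $\langle\tau-d\sigma\rangle\sim\langle\tau\rangle$ on this set for both $d\in\{0,1\}$, this entire contribution can be absorbed into $I_0$ after multiplication by a smooth cutoff $\varphi(\sigma)$ with $\varphi\equiv 1$ on $|\sigma|\leq 1$ and $\varphi\equiv 0$ on $|\sigma|\geq 2$.

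The main work is the high-frequency regime $|\sigma|\gtrsim 1$, where I would integrate by parts $A$ times in $t'$ using $\chi(0)=1$:
\[
\int_0^t\chi(t')e^{2\pi i\sigma t'}\,dt'=\sum_{j=0}^{A-1}\frac{(-1)^j}{(2\pi i\sigma)^{j+1}}\bigl[\chi^{(j)}(t)e^{2\pi i\sigma t}-\chi^{(j)}(0)\bigr]+R_A(t,\sigma),
\]
where $R_A$ has the same structural form as the left-hand side with $\chi$ replaced by $\chi^{(A)}$ and an extra factor $(2\pi i\sigma)^{-A}$. Noting that $\chi\equiv 1$ on $[0,1]$ forces $\chi^{(j)}(0)=0$ for $j\geq 1$, each boundary term, after multiplication by $\chi(t)$ and $t$-Fourier transform, produces either a contribution of the form $\sigma^{-j-1}\widetilde{\chi\cdot\chi^{(j)}}(\tau-\sigma)$, which is Schwartz in $\tau-\sigma$ and placed in $I_1$, or (only for $j=0$) a contribution $-\widetilde{\chi}(\tau)/(2\pi i\sigma)$, which is Schwartz in $\tau$ and placed in $I_0$. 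Mixed derivatives $\partial_\tau^{a_1}\partial_\sigma^{a_2}$ either fall on the Schwartz factors (preserving the $\langle\tau-d\sigma\rangle^{-A}$-decay for any $A$) or on the factor $\sigma^{-j-1}$ (which only improves the $\langle\sigma\rangle^{-1}$ bound on $|\sigma|\gtrsim 1$). Choosing $A$ much larger than the prescribed pair $(a,A)$ renders the residual $R_A$ negligible and allows it to be distributed between $I_0$ and $I_1$ by repeating the same analysis.

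I do not foresee any serious obstacle; the proof is essentially a careful non-stationary-phase calculation. The only delicate point is the smooth gluing between the low- and high-frequency pieces across $|\sigma|\sim 1$, so that $\sigma$-derivatives of the final $I_0$ and $I_1$ do not pick up spurious factors from the cutoff $\varphi$; this is handled by keeping $\varphi$ smooth and ensuring each $I_d$ is globally smooth in $\sigma$ before the split is finalized.
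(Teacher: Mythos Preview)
Your argument is correct and is precisely the standard one: the paper does not give its own proof here but simply refers to \cite{DNY}, Lemma~3.1, and your outline (Fourier inversion to reduce to the kernel $\widetilde{K}(\tau,\sigma)$, then repeated integration by parts in $t'$ so that the boundary contribution at $t'=0$ produces the $I_0$ piece localized near $\tau=0$ while the boundary contribution at $t'=t$ carries the factor $e^{2\pi i\sigma t}$ and produces the $I_1$ piece localized near $\tau=\sigma$) is exactly that argument. The only cosmetic issue is your reuse of the letter $A$ for both the target decay exponent and the number of integrations by parts; otherwise the treatment of the low-frequency regime, the main terms, and the iterated remainder is sound.
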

\begin{proof} See \cite{DNY}, Lemma 3.1.
\end{proof}
Now define $\Jm_n$ recursively by
\begin{equation}\label{defjn}
\begin{split}
\Jm_0(t)&=\chi(t)\cdot\boldsymbol{a}_{\mathrm{in}},\\
\Jm_n(t)&=\sum_{n_1+n_2+n_3=n-1}\mathcal{IW}(\Jm_{n_1},\Jm_{n_2},\Jm_{n_3})(t),
\end{split}
\end{equation} and define
\begin{equation}\label{defrn+1}\Jm_{\leq N}=\sum_{n\leq N}\Jm_{n},\quad \mathcal{R}_{N+1}=\boldsymbol{a}-\Jm_{\leq N}.\end{equation} By plugging in \eqref{eqnnewa}, we get that $\Rm_{N+1}$ satisfies the equation
\begin{equation}\label{eqnr}\Rm_{N+1}=\Jm_{\sim N}+\Lm(\Rm_{N+1}) +\Qm(\Rm_{N+1})+\Cm(\Rm_{N+1}),
\end{equation} where the relevant terms are defined as
\begin{align}
\Jm_{\sim N}&:=\sum_{\substack{n_1, n_2, n_3\leq N\\  n_1+n_2+n_3\geq N}} \mathcal{IW}(\Jm_{n_1},\Jm_{n_2},\Jm_{n_3}), \\
\Lm(v)&:=\sum_{n_1,n_2\leq N}\big(2\mathcal{IW}(\Jm_{n_1}, \Jm_{n_2}, v)+\mathcal{IW}(\Jm_{n_1}, v, \Jm_{n_2})\big),  \\
\Qm(v)&:=\sum_{n_1\leq N}\big(2\mathcal{IW}(v,v, \Jm_{n_1})+\mathcal{IW}(v, \Jm_{n_1}, v)\big),\\
\Cm(v)&:=\mathcal{IW}(v, v, v).
\end{align} 

Next we will derive a formula for the time Fourier transform of $\Jm_n$; for this we need some preparations regarding multilinear forms associated with ternary trees. 
\begin{df}\label{tree1} 
\begin{enumerate}
\item Let $\mathcal{T}$ be a ternary tree. We use $\mathcal{L}$ to denote the set of leaves and $l$ their number, $\mathcal{N}=\Tm\backslash\Lm$ the set of branching nodes and $n$ their number, and $\mathfrak{r} \in \mathcal N$ the root node. The \emph{scale} of a ternary tree $\mathcal T$ is defined as $\mathfrak s(\mathcal T)=n$ (the number of branching nodes)\footnote{By convention, the scale of a single node is 0.}. A tree of scale $n$ has $l=2n+1$ leaves, and a total of $3n+1$ vertices. 

\item (Signs on a tree) For each node $\mathfrak{n}\in\Nm$ let its children from left to right be $\mathfrak{n}_1$, $\mathfrak{n}_2$, $\mathfrak{n}_3$. We fix the sign $\iota_{\mathfrak{n}}\in\{\pm\}$ as follows: first $\iota_{\mathfrak{r}}=+$, then for any node $\mathfrak{n}\in \Nm$, define $\iota_{\mathfrak{n}_1}=\iota_{\mathfrak{n}_3}=\iota_{\mathfrak{n}}$ and $\iota_{\mathfrak{n}_2}=-\iota_{\mathfrak{n}}$. 

\begin{figure}[h!]
  \includegraphics[width=.8\linewidth]{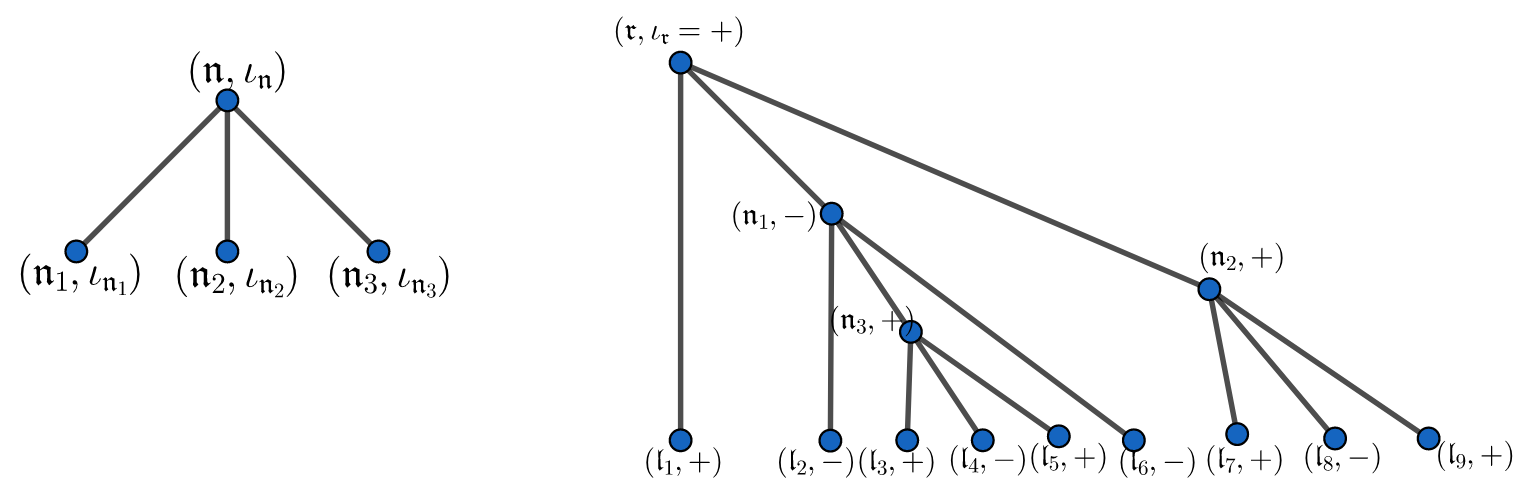}
  \caption{On the left, a node $\mathfrak n$ with its three children $\mathfrak n_1, \mathfrak n_2, \mathfrak n_3$ with signs $\iota_1=\iota_3=\iota=-\iota_2$. On the right, a tree of scale four ($\mathfrak s(\mathcal T)=4)$ with root $\mathfrak r$, four branching nodes ($\mathfrak r, \mathfrak n_1, \mathfrak n_2, \mathfrak n_3$), and $l=9$ leaves along with their signatures.}
  \label{fig:Trees}
\end{figure}

\item (Admissible assignments) Suppose we assign to each $\mathfrak{n}\in\mathcal{T}$ an element $k_{\mathfrak{n}}\in\mathbb{Z}_L^d$. We say such an assignment $(k_{\mathfrak{n}}:\mathfrak{n}\in\mathcal{T})$ is \emph{admissible} if for any $\mathfrak{n}\in\Nm$ we have $k_{\mathfrak{n}}=k_{\mathfrak{n}_1}-k_{\mathfrak{n}_2}+k_{\mathfrak{n}_3}$, and that either $k_{\mathfrak{n}}\not\in\{k_{\mathfrak{n}_1},k_{\mathfrak{n}_3}\}$ or $k_{\mathfrak{n}}=k_{\mathfrak{n}_1}=k_{\mathfrak{n}_2}=k_{\mathfrak{n}_3}$. Clearly an admissible assignment is completely determined by the values of $k_{\mathfrak{l}}$ for $\mathfrak{l}\in\mathcal{L}$. For any assignment we denote $\Omega_{\mathfrak{n}}:=\Omega(k_{\mathfrak{n}_1},k_{\mathfrak{n}_2},k_{\mathfrak{n}_3},k_{\mathfrak{n}})$; suppose we also fix\footnote{This assignment is arbitrary, but will usually be omitted since there are finitely many choices.} $d_{\mathfrak{n}}\in\{0,1\}$ for each $\mathfrak{n}\in\Nm$, we can define $q_{\mathfrak{n}}$ for each $\mathfrak{n}\in\Tm$ inductively by
\begin{equation}\label{defqn}q_{\mathfrak{n}}=0\mathrm{\ if\ }\mathfrak{n}\in\Lm;\quad q_{\mathfrak{n}}=d_{\mathfrak{n}_1}q_{\mathfrak{n}_1}-d_{\mathfrak{n}_2}q_{\mathfrak{n}_2}+d_{\mathfrak{n}_3}q_{\mathfrak{n}_3}+\Omega_{\mathfrak{n}}\mathrm{\ if\ }\mathfrak{n}\in\Nm.\end{equation}
\end{enumerate}
\end{df}

\begin{prop}\label{treedefs} For each ternary tree $\mathcal{T}$ define $\Jm_{\mathcal{T}}$ inductively by
\begin{equation}\label{defjt}
\Jm_{\bullet}(t)=\chi(t)\cdot\boldsymbol{a}_{\mathrm{in}},\quad
\Jm_{\mathcal{T}}(t)=\mathcal{IW}(\Jm_{\Tm_1},\Jm_{\Tm_2},\Jm_{\Tm_3})(t),
\end{equation} where $\bullet$ represents the tree with a single node, and $\Tm_1$, $\Tm_2$, $\Tm_3$ are the subtrees rooted at the three children of the root node of $\Tm$. Then we have
\begin{equation}\label{jttojn}\Jm_n=\sum_{\mathfrak s(\mathcal T)=n}\Jm_{\Tm}.
\end{equation} Moreover for any $\Tm$ of scale $\mathfrak s(\mathcal T)=n$ we have the formula
\begin{equation}\label{formulajt}(\widetilde{\Jm_{\Tm}})_{k}(\tau)=\bigg(\frac{\alpha T}{L^{d}}\bigg)^n\sum_{(k_{\mathfrak{n}}:\mathfrak{n}\in\mathcal{T})}\mathcal{K}_{\Tm}(\tau,k_{\mathfrak{n}}:\mathfrak{n}\in\Tm)\prod_{\mathfrak{l}\in\Lm}\sqrt{n_{\mathrm{in}}(k_{\mathfrak{l}})}\cdot\prod_{\mathfrak{l}\in\Lm}\big[\eta_{k_{\mathfrak{l}}}(\omega)\big]^{\iota_{\mathfrak{l}}},
\end{equation} where the sum is taken over all admissible assignments $(k_{\mathfrak{n}}:\mathfrak{n}\in\mathcal{T})$ such that $k_{\mathfrak{r}}=k$, and the function $\mathcal{K}=\mathcal{K}_{\Tm}(\tau,k_{\mathfrak{n}}:\mathfrak{n}\in\Tm)$ satisfies
\begin{equation}\label{boundk}|\partial_\tau^a\mathcal{K}|\lesssim_{a,A}\sum_{(d_{\mathfrak{n}}:\mathfrak{n}\in\Nm)}\langle \tau-Td_{\mathrm{r}}q_{\mathrm{r}}\rangle^{-A}\cdot\prod_{\mathfrak{n}\in\Nm}\langle Tq_{\mathfrak{n}}\rangle^{-1},
\end{equation} where $q_{\mathfrak{n}}$ is defined in (\ref{defqn}).
\end{prop}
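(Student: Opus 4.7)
I proceed by simultaneous induction on the scale $n=\mathfrak{s}(\Tm)$. The base case $n=0$ is direct: the only tree is $\bullet$, and (\ref{defjn})--(\ref{defjt}) give $\Jm_\bullet=\Jm_0=\chi(t)\boldsymbol{a}_{\mathrm{in}}$, so (\ref{jttojn}) is immediate. Taking the time Fourier transform and using (\ref{initial0}) yields $(\widetilde{\Jm_\bullet})_k(\tau)=\widehat{\chi}(\tau)\sqrt{n_{\mathrm{in}}(k)}\,\eta_k(\omega)$, matching (\ref{formulajt}) with $\mathcal{K}_\bullet(\tau)=\widehat{\chi}(\tau)$; since $\Nm=\emptyset$ for $\bullet$, the bound (\ref{boundk}) reduces to rapid decay of $\widehat\chi$, which holds as $\chi\in\mathcal{S}$. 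For $n\geq 1$ the identity (\ref{jttojn}) follows by unique decomposition: a ternary tree of scale $n$ is determined by the three subtrees $\Tm_1,\Tm_2,\Tm_3$ rooted at the children of $\mathfrak r$, whose scales sum to $n-1$; applying (\ref{defjn}), (\ref{defjt}) and the inductive hypothesis then rearranges $\Jm_n$ into $\sum_{\mathfrak{s}(\Tm)=n}\Jm_\Tm$.

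\textbf{Inductive step for the Fourier formula.} Fix $\Tm$ of scale $n\geq 1$. By (\ref{defjt}), $\Jm_\Tm=\mathcal{IW}(\Jm_{\Tm_1},\Jm_{\Tm_2},\Jm_{\Tm_3})$. Taking the time Fourier transform and applying Lemma \ref{duhamfourier} converts $\mathcal I$ into integration against $I_0+I_1$, while the trilinear form $\mathcal W$ from (\ref{Duhamel0}) becomes (up to the $-i\alpha T/L^d$ prefactor) a convolution in time, with the oscillatory factor $e^{2\pi i T\Omega_\mathfrak r t}$ shifting the output $\tau$-variable by $T\Omega_\mathfrak r$ and conjugation in the middle slot flipping the corresponding frequency variable in accordance with $\iota_{\mathfrak n_2}=-\iota_\mathfrak n$. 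Substituting the inductive formula for each $\widetilde{\Jm_{\Tm_j}}$, the prefactors $(\alpha T/L^d)^{n_j}$ combine with $\alpha T/L^d$ to produce $(\alpha T/L^d)^n$. The root constraint $k=k_{\mathfrak r_1}-k_{\mathfrak r_2}+k_{\mathfrak r_3}$ together with the $\sum^\times$ restriction and the diagonal $-|a_k|^2 a_k$ contribution---both captured exactly by Definition \ref{tree1}(3)---combine with the subtree admissibilities to give a sum over admissible $(k_\mathfrak n:\mathfrak n\in\Tm)$ with $k_\mathfrak r=k$. The leaf factors assemble over $\Lm=\Lm_1\sqcup\Lm_2\sqcup\Lm_3$ to produce (\ref{formulajt}), with $\mathcal K_\Tm$ defined by the remaining $\sigma$-integrations against $I_d$ and the delta constraint $\sigma_1-\sigma_2+\sigma_3+T\Omega_\mathfrak r=\sigma$.

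\textbf{Kernel bound.} To verify (\ref{boundk}), apply the inductive bound to each $\mathcal K_{\Tm_j}$, producing weights $\langle \sigma_j-Td_{\mathfrak r_j}q_{\mathfrak r_j}\rangle^{-A}$ (with the signs $\iota_j$ absorbed via the $\sigma_j \mapsto -\sigma_j$ substitution for the conjugated slot) along with $\prod_{\mathfrak n\in\Nm_j}\langle Tq_\mathfrak n\rangle^{-1}$. Convolving against the delta constraint concentrates $\sigma$ near
\begin{equation*}
T\bigl(d_{\mathfrak r_1}q_{\mathfrak r_1}-d_{\mathfrak r_2}q_{\mathfrak r_2}+d_{\mathfrak r_3}q_{\mathfrak r_3}\bigr)+T\Omega_\mathfrak r \;=\; T q_\mathfrak r,
\end{equation*}
where the recursion (\ref{defqn}), with $d_\mathfrak r$ identified as the outer summation index $d$ from $I_d$, is reproduced precisely. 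A standard two-dimensional convolution estimate yields a residual factor $\langle\sigma-Tq_\mathfrak r\rangle^{-A+C}$ for some absolute $C$, and the loss is absorbed by choosing $A$ large at the outset. Pairing with $|I_d(\tau,\sigma)|\lesssim\langle\tau-d\sigma\rangle^{-A}\langle\sigma\rangle^{-1}$ and integrating in $\sigma$, the factor $\langle\sigma\rangle^{-1}$ supplies the missing root weight $\langle Tq_\mathfrak r\rangle^{-1}$, while $\langle\tau-d\sigma\rangle^{-A}$ becomes $\langle\tau-Td_\mathfrak r q_\mathfrak r\rangle^{-A''}$. Assembling gives exactly (\ref{boundk}); the $\partial_\tau^a$ estimates follow by differentiating $I_d$ under the integral, each differentiation costing at most a polynomial factor that is easily absorbed.

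\textbf{Main obstacle.} The core computation is conceptually elementary, so the true obstacle lies in the bookkeeping: tracking the signs $\iota_\mathfrak n$ and their interaction with Fourier conjugation of the middle slot of $\mathcal W$, so that the alternating pattern $+,-,+$ in (\ref{defqn}) matches the convolution structure and produces $Tq_\mathfrak r$ cleanly at the root; and ensuring that the accumulated losses in the $A$-exponent through many convolution steps remain bounded independently of $n$. The latter is guaranteed because each inductive step only performs a bounded number of integrations, so a sufficiently large uniform choice of $A$ in Lemma \ref{duhamfourier} propagates indefinitely. The admissibility condition of Definition \ref{tree1}(3) is preserved at each step precisely because it encodes exactly the allowed summation range in (\ref{Duhamel0}), namely the $\sum^\times$ term together with the diagonal correction.
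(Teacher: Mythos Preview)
Your proposal is correct and follows essentially the same route as the paper: induction on the scale, with the base case handled by $\mathcal{K}_\bullet=\widetilde{\chi}$, the identity (\ref{jttojn}) via the bijection between scale-$n$ trees and triples of subtrees, and the inductive step carried out by expanding $\mathcal{IW}$ via Lemma~\ref{duhamfourier}, substituting the inductive formula, and integrating in the $\tau_j$ variables so that the recursion (\ref{defqn}) emerges at the root with $d_{\mathfrak r}$ identified with the index $d\in\{0,1\}$ from $I_d$. Your discussion of accumulated $A$-losses is more explicit than the paper's (which simply allows the implicit constant in $\lesssim_{a,A}$ to depend on the tree, harmless since only trees of scale $\leq 3N$ are ever used), but the substance is the same.
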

\begin{proof} First (\ref{jttojn}) follows from the definitions (\ref{defjn}) and (\ref{defjt}) and an easy induction. We now prove (\ref{formulajt}) and (\ref{boundk}) inductively, noting also that $(a_k)_{\mathrm{in}}=\sqrt{n_{\mathrm{in}}(k)}\cdot\eta_k(\omega)$. For $\mathcal{T}=\bullet$ (\ref{formulajt}) follows from (\ref{defjt}) with $\mathcal{K}_{\Tm}(\tau,k_{\mathfrak{r}})=\widetilde{\chi}(\tau)$ that satisfies (\ref{boundk}). Now suppose (\ref{formulajt}) and (\ref{boundk}) are true for smaller trees, then by (\ref{Duhamel0}), (\ref{defjt}) and Lemma \ref{duhamfourier}, up to unimportant coefficients, we can write
\[(\widetilde{\Jm_{\Tm}})_{k}(\tau)=\frac{i\alpha T}{L^{d}}\sum_{d\in\{0,1\}}\sum_{(k_1,k_2,k_3)}^*\int_{\mathbb{R}^3}I_d(\tau,\sigma)\prod_{j=1}^3\big[(\widetilde{\Jm_{\Tm_j}})_{k_j}(\tau_j)\big]^{\iota_j}\,\mathrm{d}\tau_j,\] where $\sum^*$ represents summation under the conditions $k_j\in \mathbb{Z}_L^d$, $k_1-k_2+k_3=k$ and either $k\not\in\{k_1,k_3\}$ or $k=k_1=k_2=k_3$, the signs $(\iota_1,\iota_2,\iota_3)=(+,-,+)$, and $\sigma=\tau_1-\tau_2+\tau_3+T\Omega(k_1,k_2,k_3,k)$. Now applying the induction hypothesis, we can write $(\widetilde{\Jm_{\Tm}})_{k}(\tau)$ in the form of (\ref{formulajt}) with the function 
\begin{equation}\label{function2}\mathcal{K}_{\Tm}(\tau,k_{\mathfrak{n}}:\mathfrak{n}\in\Tm)=\sum_{d\in\{0,1\}}\int_{\mathbb{R}^3}I_d(\tau,\tau_1-\tau_2+\tau_3+T\Omega_{\mathfrak{r}})\prod_{j=1}^3\big[\mathcal{K}_{\Tm_j}(\tau_j,k_{\mathfrak{n}}:\mathfrak{n}\in\Tm_j)\big]^{\iota_j}\,\mathrm{d}\tau_j,
\end{equation} where $\mathfrak{r}$ is the root of $\mathcal{T}$ with children $\mathfrak{r}_1,\mathfrak{r}_2,\mathfrak{r}_3$ and $\Tm_j$ is the subtree rooted at $\mathfrak{r}_j$.

It then suffices to prove that $\mathcal{K}_{\Tm}$ defined by (\ref{function2}) satisfies (\ref{boundk}). By induction hypothesis, we may fix a choice $d_{\mathfrak{n}}$ for each non-leaf node $\mathfrak{n}$ of each $\Tm_j$, and let $d_{\mathfrak{r}}=d$. Then plugging (\ref{boundk}) into (\ref{function2}) we get that
\[|\partial_\tau^a\mathcal{K}_\Tm|\lesssim_{a,A}\prod_{\mathfrak{r}\neq\mathfrak{n}\in\Nm}\frac{1}{\langle T q_{\mathfrak{n}}\rangle}\int_{\mathbb{R}^3}\frac{1}{\langle\tau-d(\tau_1-\tau_2+\tau_3+T\Omega_{\mathfrak{r}})\rangle^A}\frac{1}{\langle \tau_1-\tau_2+\tau_3+T\Omega_{\mathfrak{r}}\rangle}\prod_{j=1}^3\frac{\mathrm{d}\tau_j}{\langle\tau_j-Td_{\mathfrak{r}_j}q_{\mathfrak{r}_j}\rangle^A},\] which upon integrating in $\tau_j$ gives (\ref{boundk}). This completes the proof.
\end{proof}
\subsection{Statement of main estimates}\label{state} Define the $h^b$ space by
\begin{equation}\label{hb}
\|a(t)\|_{h^{b}} = \bigg(\int_{\R} \langle \tau \rangle^{2b}|\widetilde{a}(\tau)|^2 \, d\tau\bigg)^{\frac12},
\end{equation} and similarly the $h^{s,b}$ space for $\boldsymbol a(t)=(a_k(t))_{k \in \Z^d_L}$
\begin{equation}\label{hsb}
\|\boldsymbol a\|_{h^{s,b}} = \bigg(L^{-d}\sum_{k \in \Z^d_L}\int_{\R} \langle \tau \rangle^{2b}\langle k\rangle^{2s}|\widetilde{a}_k(\tau)|^2 \, d\tau\bigg)^{\frac12}.
\end{equation} We shall estimate the solution $u$ in an appropriately rescaled $X^{s, b}$ space, which is equivalent to estimating the sequence $\boldsymbol a(t)=\left(a_k(t)\right)_{k \in \Z^d_L}$ in the space $h^{s, b}$. Define the quantity \begin{equation}\label{defrho}\rho:=\left\{
\begin{aligned}&\alpha T,&&{\rm{if\ }}1\leq T\leq L;\\
&\alpha L,&&{\rm{if\ }}L\leq T\leq L^2;\\
&\alpha TL^{-1},&&{\rm{if\ }} T\geq L^2{\rm\ and\ }\beta_i{\rm\ generic}.
\end{aligned}
\right.\end{equation} 
By the definition of $\delta>0$ in \eqref{adm1}, we can verify that $\alpha T^{1/2}\leq\rho\leq L^{-\delta}$. 

\medskip

\begin{prop}[Well-posedness bounds]\label{lwp0}
Let $\rho$ be defined in (\ref{defrho}), then $L$-certainly, for all $1\leq n\leq 3N$, we have
\begin{equation}\label{lwp0-1}\sup_k\langle k\rangle^{2s}\|(\mathcal{J}_n)_k\|_{h^{b}}\leq L^{\theta+C(b-\frac{1}{2})}\rho^{n-1}(\alpha T^{\frac{1}{2}})\leq L^{\theta+C(b-\frac{1}{2})}\rho^{n},
\end{equation}
\begin{equation}\label{lwp2-1}\|\mathcal{R}_{N+1}\|_{h^{s,b}}\leq \rho^{N}.
\end{equation} 
\end{prop}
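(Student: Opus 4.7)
The plan is to prove (\ref{lwp0-1}) first, and then to use it together with a bootstrap applied to equation (\ref{eqnr}) to obtain (\ref{lwp2-1}). By (\ref{jttojn}), the iterate $\mathcal{J}_n$ decomposes as a sum over ternary trees of scale $n$; since there are only $C^n$ such trees, it suffices to bound $\sup_k\langle k\rangle^{2s}\|(\mathcal{J}_\mathcal{T})_k\|_{h^b}$ for each fixed $\mathcal{T}$ by a quantity slightly better than the claimed bound, so that summing over trees still yields $L^{\theta+C(b-\frac{1}{2})}\rho^n$.

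For the single-tree bound I would start from the explicit formula (\ref{formulajt}): taking modulus squared, integrating against $\langle\tau\rangle^{2b}$, and using the decay of the kernel $\mathcal{K}_\mathcal{T}$ in (\ref{boundk}), the problem reduces to estimating a multilinear Gaussian sum of the form (\ref{multgauss}) indexed by the leaves of $\mathcal{T}$. I would split the admissible assignments according to their pairing structure (in the sense of Definition \ref{dfpairing}) and on each level set apply the standard large deviation estimate (Lemma \ref{largedev}), which produces the expected $(\#S)^{1/2}$ gain coming from the independence of the unpaired leaf variables; paired leaves can be replaced by $\mathbb E|\eta|^2 \approx 1$. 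This reduces the whole matter to a purely deterministic counting problem: bound the number of admissible assignments with a prescribed pairing structure, under the near-resonance restrictions $|Tq_\mathfrak{n}|\lesssim 1$ forced at every branching node by the weights $\langle Tq_\mathfrak{n}\rangle^{-1}$ in (\ref{boundk}). This counting is the content of Proposition \ref{countingbd}, whose output combines with the prefactor $(\alpha T/L^d)^n$ in (\ref{formulajt}) to produce precisely the gain $\rho^n$ from the definition (\ref{defrho}).

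To prove (\ref{lwp2-1}) I would bootstrap the ansatz $\|\Rm_{N+1}\|_{h^{s,b}}\leq\rho^N$ inside (\ref{eqnr}). The inhomogeneous term $\Jm_{\sim N}$ is controlled by (\ref{lwp0-1}) applied to each summand: since at least one of $n_1,n_2,n_3$ is $\gtrsim N/3$ and the total scale is $\geq N+1$, one gets $\lesssim \rho^{N+1}$, and summing over the $O(N^3)$ triples costs only $L^\theta$. The cubic term $\Cm(\Rm_{N+1})$ is dominated by $\|\Rm_{N+1}\|^3_{h^{s,b}}\leq \rho^{3N}$, and the quadratic term $\Qm(\Rm_{N+1})$ by a bilinear $X^{s,b}$-type estimate paired with a single tree bound, contributing at worst $\rho\cdot\rho^{2N}\ll\rho^N$. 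The main obstacle is the linear operator $\Lm$: one must show that its $h^{s,b}\to h^{s,b}$ operator norm is, $L$-certainly, strictly smaller than $1$ (indeed smaller than $L^{-\theta}$), so that $I-\Lm$ is invertible by Neumann series and the above estimates close.

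For the operator norm of $\Lm$ I would use the $TT^*$ method iteratively, as outlined in Section \ref{probsc}. The kernel of $(\Lm\Lm^*)^D$ is, after harmless rearrangements, itself expressible as a multilinear Gaussian sum of the form (\ref{multgauss}) for a large auxiliary tree $\mathcal{T}^D$ obtained by splicing $2D$ copies of the two trees defining $\Lm$ together at a single root. Thus $(\Lm\Lm^*)^D$ is amenable to the same sharp estimate proven for the tree terms above, with a fixed polynomial-in-$L$ loss that is \emph{independent} of $D$; taking the $(2D)$-th root and choosing $D$ sufficiently large absorbs this loss into $L^\theta$ and yields the desired operator norm bound. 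The hardest and most novel step of the entire proof is the combinatorial counting underlying Proposition \ref{countingbd}: pairings between distant or cross-level leaves obstruct any naive node-by-node counting, and in order to match the sharp exponent $\rho^n$ one must carefully design an algorithm that processes the leaves in an order compatible with both the tree structure and the given pairing, exploiting at each step the near-resonance condition $|Tq_\mathfrak{n}|\lesssim 1$.
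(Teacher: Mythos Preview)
Your proposal is correct and follows essentially the same architecture as the paper: deduce (\ref{lwp0-1}) from the single-tree estimate (Proposition \ref{lwp1}) via (\ref{jttojn}), then run a contraction/bootstrap argument on (\ref{eqnr}) using the operator bound (Proposition \ref{lwp3}) for $\Lm$, with the counting Proposition \ref{countingbd} and the iterated $TT^*$ argument as the key technical ingredients. One small imprecision: your claimed bounds $\rho\cdot\rho^{2N}$ for $\Qm$ and $\rho^{3N}$ for $\Cm$ are not quite what comes out directly---the paper instead uses the crude trilinear estimate $\|\mathcal{IW}(u,v,w)\|_{h^{s,b}}\lesssim \alpha T L^d\|u\|\|v\|\|w\|$, which gives $\alpha T L^{d+\theta}\rho^{2N}$ for both terms, and this is still $\ll\rho^N$ because $N$ is chosen large depending on $\delta$.
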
 Proposition \ref{lwp0} follows from the following two bounds, which will be proved in Section \ref{mainproof}.
\begin{prop}[Bounds of tree terms]\label{lwp1} We have $L$-certainly that
\begin{equation}\label{lwp1-1}\sup_k\langle k\rangle^{2s}\|(\mathcal{J}_\Tm)_k\|_{h^{b}}\leq L^{\theta+C(b-\frac{1}{2})}\rho^{n-1}(\alpha T^{\frac{1}{2}})\leq L^{\theta+C(b-\frac{1}{2})}\rho^{n}
\end{equation} for any ternary tree of depth $n$, where $0\leq n\leq 3N$.
\end{prop}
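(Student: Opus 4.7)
The plan is to work directly with the explicit Fourier-side representation (\ref{formulajt}) of $\Jm_\Tm$ from Proposition \ref{treedefs}, treating $(\widetilde{\Jm_\Tm})_k(\tau)$ as a random multilinear form of degree $2n+1$ in the i.i.d.\ variables $\{\eta_k\}$, with deterministic coefficients controlled by the kernel estimate (\ref{boundk}). The first reduction is to convert the probabilistic bound on $\|\Jm_\Tm\|$ into a deterministic second-moment computation. Since the degree $2n+1\leq 6N+1$ is bounded independently of $L$, the Wiener chaos large deviation inequality (Lemma \ref{largedev} in the introduction) yields, $L$-certainly and uniformly in $(k,\tau)$ over a polynomially large set, a bound of the form $|(\widetilde{\Jm_\Tm})_k(\tau)|^2\lesssim L^\theta\,\mathbb{E}|(\widetilde{\Jm_\Tm})_k(\tau)|^2$; the $h^b$-integration in $\tau$ is then absorbed into an $L^{C(b-1/2)}$ prefactor using the fast decay $\langle\tau-Td_\mathfrak{r}q_\mathfrak{r}\rangle^{-A}$ from (\ref{boundk}).

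I would then compute the second moment by squaring (\ref{formulajt}) and exploiting the fact that, for both admissible laws of $\eta_k$, $\mathbb{E}[\prod\eta_{k_\mathfrak{l}}^{\iota_\mathfrak{l}}\prod\overline{\eta_{k_{\mathfrak{l}'}}^{\iota_{\mathfrak{l}'}}}]$ is a sum over perfect pairings of the $2(2n+1)$ leaves of the doubled tree in which paired leaves carry opposite signs and identical wave vectors. This reduces the problem to bounding, for each such pairing structure, a constrained sum of the form $(\alpha T/L^d)^{2n}\sum_{(k_\mathfrak{n})\text{ admissible and pairing-compatible}}|\mathcal{K}_\Tm|^2\prod_\mathfrak{l} n_\mathrm{in}(k_\mathfrak{l})$. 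Because $n_\mathrm{in}$ is Schwartz and $\mathcal{K}_\Tm$ decays rapidly in its kernel variables, this is essentially a counting problem: count admissible assignments in $\mathbb{Z}_L^d$ compatible with a given pairing structure and lying in the resonance windows $|Tq_\mathfrak{n}|\lesssim 1$ at every branching node $\mathfrak{n}\in\Nm$.

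This counting is precisely the content of Proposition \ref{countingbd} mentioned in the introduction, and is the technical core of the argument. The idea is to enumerate the branching nodes of the doubled, pairing-decorated tree in a carefully chosen order (not necessarily respecting the parent-child hierarchy), so that at each step the new constraints are matched by independent free leaf variables; each free leaf contributes a factor $L^d$, each pairing subtracts one such factor, and the resonance window at each branching node $\mathfrak{n}$ contributes a factor governed by standard lattice point counts for the quadratic form $\Omega_\mathfrak{n}$ over $\mathbb{Z}_L^d$ (limiting $T\lesssim L^{2-\delta}$ for arbitrary $\beta$, and the sharper equidistribution bounds under generic irrationality, which extend to $T\lesssim L^{d-\delta}$). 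Summing over the finitely many pairing structures, multiplying by $(\alpha T/L^d)^{2n}$, and taking square roots, the accounting produces exactly the threshold $\rho$ of (\ref{defrho}): each branching node contributes one factor of $\rho$, with one extra $\alpha T^{1/2}$ coming from the root, giving the stated bound $L^{\theta+C(b-1/2)}\rho^{n-1}(\alpha T^{1/2})$.

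The main obstacle is the counting step for pairings that cross different levels of the tree: when two paired leaves sit at very different depths, the identification imposes constraints on all branching nodes on the path between them, and these constraints interact in subtle ways with the $q_\mathfrak{n}$-window conditions, so a naive induction on $\mathfrak{s}(\Tm)$ loses too much and one is forced to design a non-hierarchical traversal order. A second delicate point is that the exclusion $k_\mathfrak{n}\not\in\{k_{\mathfrak{n}_1},k_{\mathfrak{n}_3}\}$ from Definition \ref{tree1}(3), which encodes the Wick ordering performed in Section \ref{reductions}, must be used throughout the counting to eliminate self-resonant contributions that would otherwise yield $\log L$ losses and destroy the desired geometric decay in $n$.
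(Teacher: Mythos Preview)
Your proposal is correct and follows essentially the same route as the paper: the Fourier representation (\ref{formulajt}), the large deviation bound (Lemma \ref{largedev}) to reduce to a second-moment/pairing computation, discretization in $\tau$ for uniformity, and the tree counting Proposition \ref{countingbd} after localizing each $Tq_{\mathfrak{n}}$ to unit windows. The only organizational difference is that the paper applies Lemma \ref{largedev} to a \emph{single} tree with a partial pairing (leaving a set $\mathcal{S}$ of single leaves), then expands the square and invokes Proposition \ref{countingbd} twice---once with $\mathcal{R}=\{\mathfrak{r}\}$ via the sharpened Corollary \ref{imprcor} (which is what produces your ``extra $\alpha T^{1/2}$ at the root''), and once with $\mathcal{R}=\mathcal{S}\cup\{\mathfrak{r}\}$---rather than counting directly on the doubled tree; the two formulations are equivalent.
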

\begin{prop}[An operator norm bound]\label{lwp3} We have $L$-certainly that, for any trees $\Tm_1,\Tm_2$ with $|\Tm_j|=3n_j+1$ and $0\leq n_1,n_2\leq N$, the operators
\begin{equation}\label{defop}\mathcal{P}_+:v\mapsto\mathcal{IW}(\Jm_{\Tm_1},\Jm_{\Tm_2},v)\quad\mathrm{and}\quad \mathcal{P}_-:v\mapsto\mathcal{IW}(\Jm_{\Tm_1},v,\Jm_{\Tm_2})\end{equation} satisfy the bounds
\begin{equation}\label{boundop}\|\mathcal{P}_{\pm}\|_{h^{s,b}\to h^{s,b}}\leq L^{\theta}\rho^{n_1+n_2+\frac{1}{2}}.
\end{equation}
\end{prop}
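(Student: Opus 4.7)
The plan is to adapt the iterated $TT^*$ method outlined in Section \ref{operatorsec}; I treat $\mathcal{P}_+$, the case $\mathcal{P}_-$ being handled analogously. For a large integer $D$ to be chosen later, the $TT^*$ identity
\[
\|\mathcal{P}_+\|_{h^{s,b}\to h^{s,b}}^{2D}=\|(\mathcal{P}_+\mathcal{P}_+^*)^D\|_{h^{s,b}\to h^{s,b}}
\]
reduces the estimate to a bound on $\|(\mathcal{P}_+\mathcal{P}_+^*)^D\|$. Since the target $L^\theta\rho^{n_1+n_2+\frac12}$ tolerates a subpolynomial loss in $L$, it will suffice to prove a bound of the form $L^{C_0}\rho^{e(D)}$ on $(\mathcal{P}_+\mathcal{P}_+^*)^D$ with $C_0$ independent of $D$ and $e(D)\ge 2D(n_1+n_2+\frac12)$; the $\frac{1}{2D}$-th root then absorbs $L^{C_0}$ into $L^\theta$ once $D$ is large enough.

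The next step is to compute the Fourier-side kernel of $(\mathcal{P}_+\mathcal{P}_+^*)^D$. Unfolding the $2D$ iterated compositions of the trilinear Duhamel operator $\mathcal{IW}(\Jm_{\Tm_1},\Jm_{\Tm_2},\cdot)$ and its adjoint, and inserting the explicit tree expansion (\ref{formulajt}) for each $\Jm_{\Tm_j}$, the kernel takes the form
\[
K(\tau,k;\sigma,k')=\Big(\frac{\alpha T}{L^{d}}\Big)^{n_\star}\sum_{(k_{\mathfrak n})}\mathcal{K}_{\Tm^D}(\tau,\sigma,k_{\mathfrak n})\prod_{\mathfrak l\in\Lm(\Tm^D)}\sqrt{n_{\mathrm{in}}(k_{\mathfrak l})}\,\big[\eta_{k_{\mathfrak l}}(\omega)\big]^{\iota_{\mathfrak l}},
\]
where $\Tm^D$ is a large auxiliary ternary tree obtained by attaching, at each of $2D$ branching nodes lying along a central spine, copies of $\Tm_1$ and $\Tm_2$ (with the adjoints encoded by conjugating alternately along the spine). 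The two ends of the spine carry the external frequencies $(k,k')$, which is the only structural departure from the single-root tree terms of Proposition \ref{treedefs}. The scale is $n_\star=\mathfrak s(\Tm^D)=2D(n_1+n_2+1)$, and $\mathcal{K}_{\Tm^D}$ satisfies a two-external-leg version of (\ref{boundk}) obtained by the same inductive argument.

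The third step is to feed this structure into the chaos machinery. Since $K$ is a multilinear expression in the $\eta$'s of exactly the form (\ref{multgauss}) for the tree $\Tm^D$, Proposition \ref{lwp1} and the pairing-counting algorithm of Proposition \ref{countingbd} apply with only cosmetic adjustments for the second external leg; crucially, the loss is of the form $L^{\theta_1+C(b-\frac12)}$ with $\theta_1$ and $C$ independent of $D$. A Schur-type estimate in $h^{s,b}$ (whose further $L^{\theta_2}$ losses are again $D$-independent) then upgrades the kernel bound to
\[
\|(\mathcal{P}_+\mathcal{P}_+^*)^D\|_{h^{s,b}\to h^{s,b}}\le L^{\theta_1+\theta_2+C(b-\frac12)}\,\rho^{2D(n_1+n_2+1)}.
\]
Taking the $\frac{1}{2D}$-th root yields $\|\mathcal{P}_+\|_{h^{s,b}\to h^{s,b}}\le L^{(\theta_1+\theta_2+C(b-\frac12))/(2D)}\rho^{n_1+n_2+1}$; choosing $D$ large makes the $L$-factor smaller than $L^\theta$, and since $\rho\le L^{-\delta}<1$ one has $\rho^{n_1+n_2+1}\le\rho^{n_1+n_2+\frac12}$, giving (\ref{boundop}).

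The principal obstacle is in the second and third steps: one must organise the kernel of $(\mathcal{P}_+\mathcal{P}_+^*)^D$ into a single auxiliary tree chaos, and verify that the counting bounds of Proposition \ref{countingbd} extend uniformly in the depth $D$. The two-external-leg structure causes new pairings between leaves of different $\Tm_1$ or $\Tm_2$ copies across the spine, so the tree-counting algorithm must accommodate these non-local pairings; however, the gain in the exponent ($\rho^{2D(n_1+n_2+1)}$ versus the needed $\rho^{2D(n_1+n_2)+D}$) provides a comfortable $\rho^D$ margin that tolerates mildly suboptimal counting, as long as the resulting $L$-losses remain $D$-independent.
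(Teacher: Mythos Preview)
Your high-level strategy---iterate $TT^*$, recast the kernel of the $D$-fold product as a single large tree $\Tm^D$, feed it into the large deviation and counting machinery, then take the $\tfrac{1}{2D}$-th root---is the same as the paper's. The organization differs in one important respect: the paper does \emph{not} run $TT^*$ directly on $\mathcal{P}_+:h^{s,b}\to h^{s,b}$. Instead it first performs two reductions (Steps~1--2): it interpolates between a trivial $h^{0,b}\to h^{0,1}$ bound and a target $h^{0,b}\to h^{0,1-b}$ bound, and then uses the H\"older-type inequalities (\ref{holder}) to freeze the time-Fourier variables $(\tau,\tau')$ and reduce to a genuine $\ell^2_k\to\ell^2_k$ matrix $\mathcal{G}$. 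Only then is $(\mathcal{G}\mathcal{G}^*)^D$ analyzed via the tree $\Tm^D$, Schur's bound, and Proposition~\ref{countingbd}. This detour avoids having to compute adjoints in the weighted space $h^{s,b}$ and makes the Schur step clean; your ``Schur-type estimate in $h^{s,b}$'' would have to simultaneously handle the $\langle\tau\rangle^{2b}$ weights and the discrete $k$-variable, which is more delicate than you indicate.

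There is also one technical point that your sketch omits and that the paper treats carefully: the external frequencies $k,k'$ are \emph{unbounded} (since $v$ has no a priori localization), so the $L$-certain large deviation bound on the kernel $((\mathcal{G}\mathcal{G}^*)^D)_{kk'}$ must be made uniform over infinitely many $(k,k')$. The paper resolves this via Claim~\ref{extra0}, observing that the kernel depends on $(k,k')$ only through $|k_{\mathfrak{r}_j}|_\beta^2-|k_{\mathfrak{r}_{j+1}}|_\beta^2$ along the spine, and that the set of such dependence functions has at most $L^{C\theta^{-1}}$ effective representatives. Without this step the ``$L$-certainly'' cannot be propagated to a uniform operator bound. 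Your margin observation ($\rho^D$ slack) is correct but orthogonal to this issue: the slack absorbs a $D$-independent $L$-power from Schur, not an infinity of exceptional sets.
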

\begin{proof}[Proof of Proposition \ref{lwp0} assuming Propositions \ref{lwp1} and \ref{lwp3}] Assume we have already excluded an exceptional set of probability $\lesssim e^{-L^\theta}$. The bound (\ref{lwp0-1}) follows directly from (\ref{jttojn}) and (\ref{lwp1-1}), it remains to bound $\mathcal{R}_{N+1}$. Recall that $\mathcal{R}_{N+1}$ satisfies the equation (\ref{eqnr}), so it suffices to prove that the mapping
\[v\mapsto \Jm_{\sim N}+\Lm(v)+\Qm(v)+\Cm(v)\] is a contraction mapping from the set $\mathcal{Z}=\{v:\|v\|_{h^{s,b}}\leq \rho^{N}\}$ to itself. We will only prove that it maps $\mathcal{Z}$ into $\mathcal{Z}$ as the contraction part follows in a similar way; now suppose $\|v\|_{h^{s,b}}\leq \rho^N$, then by (\ref{jttojn}) and (\ref{lwp1-1}) we have
\[ \|\Jm_{\sim N}\|_{h^{s,b}}^2\sim L^{-d}\sum_{k\in\mathbb{Z}_L^d}\langle k\rangle^{2s}\|(\Jm_{\sim N})_k\|_{h^b}^2\lesssim (L^{\theta+C(b-\frac{1}{2})}\rho^{N+1})^2\cdot L^{-d}\sum_{k\in\mathbb{Z}_L^d}\langle k\rangle^{-2s}\ll\rho^{2N},\] so $\|\Jm_{\sim N}\|_{h^{s,b}}\ll \rho^N$. Next we may use (\ref{boundop}) to bound
\[\|\Lm(v)\|_{h^{s,b}}\leq L^\theta\rho^{\frac{1}{2}}\cdot\|v\|_{h^{s,b}}\leq L^\theta\rho^{\frac{1}{2}}\cdot \rho^N\ll\rho^N.\] As for the terms $\Qm(v)$ and $\Cm(v)$, we apply the simple bound (which easily follows from (\ref{Duhamel0}))
\begin{multline}\|\mathcal{IW}(u,v,w)\|_{h^{s,b}}\lesssim\|\mathcal{IW}(u,v,w)\|_{h^{s,1}}\lesssim\|\mathcal{IW}(u,v,w)\|_{h^{s,0}}+\|\partial_t\mathcal{IW}(u,v,w)\|_{h^{s,0}}\\\lesssim\frac{\alpha T}{L^{d}}\sum_{\mathrm{cyc}}\|u\|_{h^{s,0}}\|v_k(t)\|_{\ell_k^1L_t^\infty}\|w_k(t)\|_{\ell_k^1L_t^\infty}\lesssim \alpha TL^d\|u\|_{h^{s,b}}\|v\|_{h^{s,b}}\|w\|_{h^{s,b}},\end{multline} where $\sum_{\mathrm{cyc}}$ means summing in permutations of $(u,v,w)$. As $\alpha T\leq L^{d}$, we conclude (using also Proposition \ref{lwp1}) that
\[\|\Qm(v)\|_{h^{s,b}}+\|\Cm(v)\|_{h^{s,b}}\lesssim \alpha TL^{\theta+d+C(b-\frac{1}{2})}\rho^{2N}\ll\rho^N,\] since $\rho\leq L^{-\delta}$ and $N\gg\delta^{-1}$. This completes the proof.
\end{proof}

    \section{Proof of main estimates}\label{mainproof} In this section we prove Propositions \ref{lwp1} and \ref{lwp3}.
  \subsection{Large deviation and basic counting estimates}  We start by making some preparations, namely the large deviation and counting estimates that will be used repeatedly in the proof below.
  \begin{lem}\label{largedev}
Let $\{\eta_k(\omega)\}$ be i.i.d. complex random variables, such that the law of each $\eta_k$ is either Gaussian with mean $0$ and variance $1$, or the uniform distribution on the unit circle. Let $F=F(\omega)$ be defined by \begin{equation}\label{indp}F(\omega)=\sum_{k_1,\cdots,k_n}a_{k_1\cdots k_n}\prod_{j=1}^n\eta_{k_j}^{\iota_j},
\end{equation} where $a_{k_1\cdots k_n}$ are constants, then $F$ can be divided into finitely many terms, and for each term there is a choice of $X=\{i_1,\cdots,i_p\}$ and $Y=\{j_1,\cdots,j_p\}$ which are two disjoint subsets of $\{1,2,\cdots,n\}$, such that\begin{equation}\label{largedevest}\mathbb{P}(|F(\omega)|\geq A\cdot M^{\frac{1}{2}})\leq Ce^{-cA^{\frac{2}{n}}}
\end{equation} holds with
\begin{equation}\label{ortho}M=\sum_{(k_{\ell}):\ell\not\in X\cup Y}\bigg(\sum_{\mathrm{pairing}\,(k_{i_s},k_{j_s}):1\leq s\leq p}|a_{k_1\cdots k_n}|\bigg)^2,
\end{equation} where a pairing $(k_{i},k_{j})$ means $(\iota_i+\iota_j,\iota_ik_i+\iota_jk_j)=0$.
\end{lem}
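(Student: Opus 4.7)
The plan is to prove (\ref{largedevest}) by decomposing $F$ according to the coincidence pattern of the summation indices $(k_1,\dots,k_n)$, and then invoking the standard hypercontractivity of polynomial chaos on each resulting piece.

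\emph{Step 1: Decomposition by coincidence pattern.} For each set partition $\mathcal{P}$ of $\{1,\dots,n\}$, let $F_\mathcal{P}$ denote the contribution from tuples $(k_1,\dots,k_n)$ such that $k_i=k_j$ precisely when $i\sim_\mathcal{P} j$. Then $F=\sum_\mathcal{P} F_\mathcal{P}$, a sum of at most $B_n$ (Bell number) terms; since the conclusion (\ref{largedevest}) is stable under finite decomposition, it suffices to prove the bound for each $F_\mathcal{P}$ with an appropriate choice of $X,Y$.

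\emph{Step 2: Extraction of $|\eta|^2$ factors.} Fix $\mathcal{P}$, and for a block $B$ with sign counts $m_B^\pm$, maximally match opposite-sign indices within $B$ to produce $m_B:=\min(m_B^+,m_B^-)$ factors $|\eta_{k_B}|^2$ together with an unmatched product $\eta_{k_B}^{\pm(m_B^+-m_B^-)}$. Writing $|\eta_k|^2=1+\zeta_k$ with $\zeta_k:=|\eta_k|^2-1$, which is mean-zero (and identically zero in the Steinhaus case), I would expand $(1+\zeta_{k_B})^{m_B}$ binomially and distribute across all blocks. This splits $F_\mathcal{P}$ further into finitely many pieces. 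In each piece, a subset of the matched pairs is replaced by the constant $1$ -- these define $X=\{i_1,\dots,i_p\}$ and $Y=\{j_1,\dots,j_p\}$ with $\iota_{i_s}=-\iota_{j_s}$ and $k_{i_s}=k_{j_s}$ -- while the remaining $\eta_k^\pm$ and $\zeta_k$ factors are independent, mean-zero random variables with no residual pairings. The piece is thus a polynomial $G(\omega)$ of total degree at most $n$ in such independent variables.

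\emph{Step 3: Hypercontractivity.} Each such piece has the form
\[
G(\omega)=\sum_{(k_\ell):\,\ell\notin X\cup Y} b_{(k_\ell)}\,\xi_{(k_\ell)}(\omega),\qquad b_{(k_\ell)}=\sum_{k_{i_s}=k_{j_s},\,1\leq s\leq p} a_{k_1\cdots k_n},
\]
where $\{\xi_{(k_\ell)}\}$ is an orthogonal family of products of independent mean-zero variables; by orthogonality, $\|G\|_{L^2}^2\lesssim \sum_{(k_\ell)}|b_{(k_\ell)}|^2\leq M$. I would then invoke the hypercontractivity of polynomial chaos (Nelson's inequality for Gaussians, and the Bonami-Beckner-type analog for Steinhaus variables, see e.g.\ Janson's \emph{Gaussian Hilbert Spaces}), giving $\|G\|_{L^{2p}}\lesssim_n p^{n/2}\|G\|_{L^2}$ for all $p\geq 1$. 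Chebyshev then yields $\mathbb{P}(|G|\geq AM^{1/2})\leq Ce^{-cA^{2/n}}$, as desired.

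\emph{Main obstacle.} The probabilistic ingredient is classical; the real work is the combinatorial bookkeeping of Step 2, namely verifying that after the binomial expansions the resulting monomials are genuinely $n$-th order chaos elements in \emph{independent} variables (no hidden pairings remain once the $|\eta_k|^2$'s are carefully extracted), and matching the resulting $\|G\|_{L^2}^2$ to $M$ as defined in (\ref{ortho}). The key observation is that each pairing constraint $k_{j_s}=k_{i_s}$ frees exactly one summation variable per pair, producing the inner sum in (\ref{ortho}); placing absolute values inside that inner sum is a safe majorization consistent with the $L^2$ computation, and accounts for the form of $M$ stated in the lemma.
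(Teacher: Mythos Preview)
Your overall strategy matches the paper's for the Gaussian case: decompose by the coincidence pattern of $(k_1,\dots,k_n)$, center the $|\eta_k|^2$ factors to obtain mean-zero chaos elements, then apply hypercontractivity. For the Steinhaus case you appeal to hypercontractivity directly, whereas the paper instead proves a moment comparison $\mathbb{E}|F|^{2q}\le\mathbb{E}|H|^{2q}$ with the Gaussian form $H$ built from $|a_{k_1\cdots k_n}|$; both routes are legitimate, and yours is in fact simpler there since $\zeta_k\equiv 0$ for Steinhaus variables and Step 2 becomes vacuous.

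There is, however, a real gap in your Step 2 in the Gaussian case. The binomial expansion of $(1+\zeta_k)^{m_B}$ produces factors $\zeta_k^{\,j}$, and for $j\ge 2$ these are \emph{not} mean-zero (e.g.\ $\mathbb{E}\zeta_k^2=1$). Hence your family $\{\xi_{(k_\ell)}\}$ is not orthogonal, and the bound $\|G\|_{L^2}^2\lesssim M$ can fail for the pair $(X,Y)$ you assign to such a piece. Concretely: take $n=4$ with signs $(+,-,+,-)$, the one-block partition, and $a_{k_1k_2k_3k_4}=\mathbf{1}_{k_1=\cdots=k_4\in\{1,\dots,N\}}$. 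Your $j=2$ piece has $X=Y=\varnothing$ and $G=\sum_k a_{kkkk}\,\zeta_k^2$, so that
\[
\|G\|_{L^2}^2\ \ge\ \Big|\sum_k a_{kkkk}\Big|^2\ =\ N^2,\qquad\text{while}\qquad M\ =\ \sum_{k_1,\dots,k_4}|a_{k_1k_2k_3k_4}|^2\ =\ N.
\]

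The fix is exactly what the paper does: for a block with equal sign-counts $b_s=c_s$ use the single centering $|\eta|^{2b_s}=b_s!+\big(|\eta|^{2b_s}-b_s!\big)$ rather than iterating $|\eta|^2=1+\zeta$. Then every block appearing in the random factor carries a genuinely mean-zero contribution, and the $L^2$ computation goes through (with an $O_n(1)$ loss from permutations of block values between the two copies in $\mathbb{E}|G|^2$). Equivalently you could recenter each $\zeta_k^{\,j}$ and iterate your decomposition, which reproduces the same structure.
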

\begin{proof} First assume $\eta_k$ is Gaussian. Then by the standard hypercontractivity estimate for Ornstein-Uhlenbeck semigroup {(see for example \cite{OT})}, we know that (\ref{largedevest}) holds with $M$ replaced by $\mathbb{E}|F(\omega)|^2$. Now to estimate $\mathbb{E}|F(\omega)|^2$, by dividing the sum (\ref{indp}) into finitely many terms and rearranging the subscripts, we may assume in a monomial of (\ref{indp}) that
\begin{equation}k_{1}=\cdots=k_{j_1},\,\, k_{j_1+1}=\cdots=k_{j_2},\cdots,k_{j_{r-1}+1}=\cdots=k_{j_r},\,\,1\leq j_1<\cdots <j_r=n,
\end{equation} and the $k_{j_s}$ are different for $1\leq s\leq r$. Such a monomial has the form
\[\prod_{s=1}^r\eta_{k_{j_s}}^{b_s}(\overline{\eta_{k_{j_s}}})^{c_s},\quad b_s+c_s=j_s-j_{s-1}\,\,(j_0=0),\] where the factors for different $s$ are independent. We may also assume $b_s=c_s$ for $1\leq s\leq q$ and $b_s\neq c_s$ for $q+1\leq s\leq r$, and for $1\leq j\leq j_q$ we may assume $\iota_j$ has the same sign as $(-1)^j$. Then we can further rewrite this monomial as a linear combination of
\[\prod_{s=1}^pb_s!\prod_{s=p+1}^q(|\eta_{k_{j_s}}|^{2b_s}-b_s!)\prod_{s=q+1}^r\eta_{k_{j_s}}^{b_s}(\overline{\eta_{k_{j_s}}})^{c_s}\] for $1\leq p\leq q$. Therefore, $F(\omega)$ is a finite linear combination of expressions of form
\[\sum_{k_{j_1},\cdots,k_{j_r}}a_{k_{j_1},\cdots,k_{j_1},\cdots,k_{j_r},\cdots k_{j_r}}\prod_{s=1}^pb_s!\prod_{s=p+1}^q(|\eta_{k_{j_s}}|^{2b_s}-b_s!)\prod_{s=q+1}^r\eta_{k_{j_s}}^{b_s}(\overline{\eta_{k_{j_s}}})^{c_s}.\] Due to independence and the fact that $\mathbb{E}(|\eta|^{2b}-b!)=\mathbb{E}(\eta^b(\overline{\eta})^c)=0$ for a normalized Gaussian $\eta$ and $b\neq c$, we conclude that
\begin{equation}\mathbb{E}|F(\omega)|^2\lesssim\sum_{k_{j_{p+1}},\cdots k_{j_r}}\bigg(\sum_{k_{j_1},\cdots,k_{j_p}}|a_{k_{j_1},\cdots,k_{j_1},\cdots,k_{j_r},\cdots k_{j_r}}|\bigg)^2,
\end{equation} which is bounded by the right hand side of (\ref{ortho}), by choosing $X=\{1,3,\cdots,j_p-1\}$ and $Y=\{2,4,\cdots,j_p\}$, as under our assumptions $(k_{2i-1},k_{2i})$ is a pairing for $2i\leq j_p$.

Now assume $\eta_k$ is uniformly distributed on the unit circle. Let $\{g_k(\omega)\}$ be i.i.d. normalized Gaussians as in the first part, and consider the random variable
\[H(\omega)=\sum_{k_1,\cdots,k_n}|a_{k_1\cdots k_n}|\prod_{j=1}^ng_{k_j}^{\iota_j}.\] We can calculate
\begin{equation}\label{exp1}\mathbb{E}(|F(\omega)|^{2q})=\sum_{(k_j^i,\ell_j^i)}\prod_{i=1}^qa_{k_1^i\cdots k_n^i}\overline{a_{\ell_1^i\cdots\ell_n^i}}\mathbb{E}\bigg(\prod_{i=1}^q\prod_{j=1}^n\eta_{k_j^i}^{\iota_j}\overline{\eta_{\ell_j^i}^{\iota_j}}\bigg),
\end{equation} where $1\leq i\leq q$ and $1\leq j\leq n$, and similarly for $H$,
\begin{equation}\label{exp2}\mathbb{E}(|H(\omega)|^{2q})=\sum_{(k_j^i,\ell_j^i)}\prod_{i=1}^q|a_{k_1^i\cdots k_n^i}||a_{\ell_1^i\cdots\ell_n^i}|\mathbb{E}\bigg(\prod_{i=1}^q\prod_{j=1}^ng_{k_j^i}^{\iota_j}\overline{g_{\ell_j^i}^{\iota_j}}\bigg).
\end{equation}The point is that we always have
\[\bigg|\mathbb{E}\bigg(\prod_{i=1}^q\prod_{j=1}^r\eta_{k_j^i}^{\iota_j}\overline{\eta_{\ell_j^i}^{\iota_j}}\bigg)\bigg|\leq \mathrm{Re}\mathbb{E}\bigg(\prod_{i=1}^q\prod_{j=1}^rg_{k_j^i}^{\iota_j}\overline{g_{\ell_j^i}^{\iota_j}}\bigg).\] In fact, in order for either side to be nonzero, for any particular $k$, the number of $(i,j)$'s such that $k_j^i=k$ must equal the number of $(i,j)$'s such that $\ell_j^i=k$; say both equal to $m$, then by independence, the factor that the $\eta_k^\pm$'s contribute to the expectation on the left hand side will be $\mathbb{E}|\eta_k|^{2m}=1$, while for the right hand side this factor will be $\mathbb{E}|g_k|^{2m}=m!\geq 1$.

This implies that $\mathbb{E}(|F|^{2q})\leq \mathbb{E}(|H|^{2q})$ for any positive integer $q$; since (\ref{largedevest}) holds for $H$ we have
\[\mathbb{E}(|H|^{2q})\leq (Cq)^{nq}M^q\] with an absolute constant $C$. This gives an upper bound for $\mathbb{E}(|F|^{2q})$, and by Chebyshev inequality, we deduce (\ref{largedevest}) for $F$.\end{proof}
\begin{lem}\label{counting} Let $\beta=(\beta_1,\cdots,\beta_d)\in[1,2]^d$ and $0<T\leq L^d$. Assume that $\beta$ is generic for $T\geq L^{2}$. Then, uniformly in $(k,a,b,c)\in(\mathbb{Z}_L^d)^4$ and $m\in\mathbb{R}$, the sets
\begin{multline}\label{3vectorest}
S_3=\big\{(x,y,z)\in (\mathbb{Z}_L^d)^3:x-y+z=k,\,\,\big||x|_\beta^2-|y|_\beta^2+|z|_\beta^2-|k|_\beta^2-m\big|\leq T^{-1},\\|x-a|\leq L^\theta,\,\,|y-b|\leq L^\theta,\,\,|z-c|\leq L^\theta,\,\,\mathrm{and\ }k\not\in\{x,z\}\big\}
\end{multline}
\begin{multline}\label{2vectorest}S_2=\big\{(x,y)\in(\mathbb{Z}_L^d)^3:x\pm y=k,\,\,\big||x|_\beta^2\pm|y|_\beta^2-|k|_\beta^2-m\big|\leq T^{-1},\\|x-a|\leq L^\theta,\,\,|y-b|\leq L^\theta,\,\,\mathrm{and\ }x\neq y\mathrm{\ if\ the\ sign}\pm\mathrm{is}-\big\}
\end{multline} satisfy the bounds
\begin{equation}\label{vectorbds}\#S_3\lesssim L^{2d+\theta}T^{-1},\quad \#S_2\lesssim\left\{
\begin{split}& L^{d+\theta},&\mathrm{if\ }&T\leq L;\\
& L^{d+1+\theta}T^{-1},&\mathrm{if\ }&T\in[L,L^2];\\
& L^{d-1+\theta},&\mathrm{if\ }&T\geq L^{2}\mathrm{\ and\ }\beta_i\mathrm{\ generic}.
\end{split}
\right.
\end{equation} In particular, with $\rho$ defined in (\ref{defrho}) we have
\begin{equation}\label{defq}\max\big((\#S_3)^{\frac{1}{2}},\#S_2\big)\leq L^\theta Q,\quad Q:=\frac{L^d\rho}{\alpha T}.
\end{equation}
\end{lem}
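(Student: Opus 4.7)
Both estimates reduce to counting lattice points of $\mathbb{Z}_L^d = L^{-1}\mathbb{Z}^d$ in small boxes under a single scalar constraint, obtained by using the linear constraints in the definitions of $S_3$ and $S_2$ to eliminate one variable.

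For $\#S_3$, I will set $\widetilde u = x-k$ and $\widetilde v = z-k$; using $y = x+z-k$, the elementary identity
\[
|x|_\beta^2 - |y|_\beta^2 + |z|_\beta^2 - |k|_\beta^2 = -2\langle \widetilde u, \widetilde v\rangle_\beta
\]
puts $S_3$ in bijection with pairs $(\widetilde u, \widetilde v)\in (L^{-1}\mathbb{Z}^d\setminus\{0\})^2$ localized in $L^\theta$-boxes around $a-k$ and $c-k$ respectively, and satisfying $|2\langle \widetilde u, \widetilde v\rangle_\beta + m|\leq T^{-1}$. For each fixed $\widetilde u\neq 0$ this confines $\widetilde v$ to a slab of thickness $T^{-1}/|\widetilde u|_\beta$ normal to $\widetilde u$, and a standard lattice slab count gives at most $L^{d-1+\theta}(LT^{-1}/|\widetilde u|_\beta + 1)$ admissible $\widetilde v$. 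Summing dyadically in $|\widetilde u|_\beta$, and using $d\geq 2$ so that $\sum_{\widetilde u\neq 0}|\widetilde u|_\beta^{-1}$ over a ball of radius $L^\theta$ is controlled by its largest dyadic scale, I get the main term $L^{2d+\theta}T^{-1}$. The main technical obstacle here is the ``$+1$'' leftover in the slab count: summed trivially it contributes a spurious $L^{2d-1+\theta}$ that exceeds the target when $T>L$, and it must be absorbed by noting that for $|\widetilde u|_\beta > LT^{-1}$ the slab has sub-lattice thickness and so generically contains no lattice layer; the number of $\widetilde u$ giving a nontrivial slab is then bounded symmetrically via the $\widetilde v$-variable to close the count across all $T\leq L^d$.

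For $\#S_2$, the linear constraint $x\pm y=k$ determines $y$, leaving a single scalar condition on $x\in L^{-1}\mathbb{Z}^d$ localized in an $L^\theta$-ball around $a$. With sign $-$, the quadratic collapses to the linear slab $|2\langle x,k\rangle_\beta - |k|_\beta^2 - m|\leq T^{-1}$ (with $k\neq 0$ forced by $x\neq y$), and direct lattice slab counting handles the three $T$ regimes. With sign $+$, one has $y=k-x$ and the identity $|x|_\beta^2 + |y|_\beta^2 - |k|_\beta^2 = 2|x-k/2|_\beta^2 - |k|_\beta^2/2$ yields the $\beta$-weighted spherical shell condition $\big||x-k/2|_\beta^2 - m'\big|\leq T^{-1}/4$ for some $m'=m'(k,m)$. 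This shell count splits by $T$: for $T\leq L$ the trivial box bound $L^{d+\theta}$ suffices; for $L\leq T\leq L^2$ a radial parametrization reduces the shell to an annulus of radial thickness $\sim T^{-1}/|x-k/2|_\beta$, and the worst case $|x-k/2|_\beta\sim L^{-1}$ yields $L^{d+1+\theta}T^{-1}$; for $T\geq L^2$ the required radial thickness drops below $L^{-2}$, at which scale the sharp count $L^{d-1+\theta}$ fails on rational tori but holds for generic $\beta$ by the irrational quadratic-form equidistribution of Bourgain \cite{Bourgain} adapted in \cite{BGHS2}.

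Finally, (\ref{defq}) follows by a routine case-check against (\ref{defrho}): each of the three definitions of $\rho$ makes $L^\theta Q = L^\theta\cdot L^d\rho/(\alpha T)$ match $\max\big((\#S_3)^{1/2}, \#S_2\big)$ in the corresponding range of $T$. The hardest step across the whole lemma is the shell count for $T\geq L^2$, which is the only place the genericity of $\beta$ enters and which relies on nontrivial number-theoretic input from \cite{Bourgain,BGHS2}.
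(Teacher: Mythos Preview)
Your sketch has the right change of variables but two genuine gaps, and you have misplaced where the genericity hypothesis enters.

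For $\#S_3$: the slab bound $L^{d-1+\theta}(LT^{-1}/|\widetilde u|_\beta+1)$ is fine, but your handling of the ``$+1$'' does not work. Summed over $\widetilde u$ it gives $L^{2d-1+\theta}$, which exceeds the target once $T>L$. The claim that a sub-lattice-thickness slab ``generically contains no lattice layer'' is false for arbitrary $\beta$ (which is all you are allowed when $T<L^2$): for $\beta_i=1$ the values $\langle u,v\rangle$ with $v\in\mathbb{Z}^d$ fill $\gcd(u)\mathbb{Z}$, so the slab always contains an entire hyperplane of lattice points, and the ``symmetry via $\widetilde v$'' remark is circular. The paper takes a genuinely different route: writing $p=k-x$, $q=k-z$ and setting $\sigma_i=(Lp_i)(Lq_i)\in\mathbb{Z}$, the bilinear constraint becomes the \emph{linear} condition $|\sum_i\beta_i\sigma_i+L^2m/2|\leq L^2T^{-1}/2$, and the divisor bound makes the map $(u_i,v_i)\mapsto\sigma_i$ essentially injective (fibers of size $O(L^\theta)$). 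Counting integer $\sigma_i$ under a linear constraint is then elementary for $T\leq L^2$; for $T>L^2$ the interval has sub-unit length and one needs the Diophantine genericity of $\beta$ to separate the values $\sum_i\beta_i\sigma_i$. So genericity enters in $S_3$, not in $S_2$.

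For $\#S_2$: your spherical-shell argument for the $+$ sign is more than is needed and wrongly invokes \cite{Bourgain,BGHS2}. The paper's proof is elementary, uniform in the sign, and uses no genericity at all: fix the first $d-1$ coordinates of $x$ (hence of $y$), at cost $L^{(d-1)(1+\theta)}$; then $x_d\pm y_d=k_d$ is fixed and $\beta_d(x_d^2\pm y_d^2)$ lies in an interval of length $O(T^{-1})$, which pins $x_d\in L^{-1}\mathbb{Z}$ to at most $1+L^2T^{-1}$ values (for the $-$ sign use $k_d\neq 0$, available since $x\neq y$; for the $+$ sign pass to $(x_d-y_d)^2$). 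Multiplying out gives all three regimes of (\ref{vectorbds}) directly. In particular your assertion that the $L^{d-1+\theta}$ shell count ``fails on rational tori'' is incorrect, and the statement that the shell count is ``the hardest step'' and ``the only place genericity enters'' is backwards.
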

\begin{proof} We first prove the bound on $\#S_3$. Let $k-x=p$ and $k-z=q$, then we may write $p=(L^{-1}u_1,\cdots, L^{-1}u_d)$ and similarly for $q$, where each $u_i$ and $v_i$ is an integer and belongs to a fixed interval of length $O(L^{1+\theta})$. Moreover from $(x,y,z)\in S_3$ we deduce that
\[\bigg|\sum_{i=1}^d\beta_iu_iv_i+\frac{L^2m}{2}\bigg|\leq\frac{L^2T^{-1}}{2}.\] We may assume $u_iv_i=0$ for $1\leq i\leq r$, and $\sigma_i:=u_iv_i\neq 0$ for $r+1\leq i\leq d$, then the number of choices for $(u_i,v_i:1\leq i\leq r)$ is $O(L^{r+\theta})$. It is known {(see \cite{DNY,DNY2})} that given $\sigma\neq 0$, the number of integer pairs $(u,v)$ such that $u$ and $v$ each belongs to an interval of length $O(L^{1+\theta})$ and $uv=\sigma$ is $O(L^\theta)$. Therefore $\#S_3$ is bounded by $O(L^{r+\theta})$ times the number of choices for $(\sigma_{r+1},\cdots,\sigma_d)$ that satisfies
\begin{equation}\label{neweqn1}|\sigma_j|\leq L^{2+\theta}\,(r+1\leq i\leq d),\quad \sum_{j=r+1}^d\beta_i\sigma_i=\frac{L^2m}{2}+O(L^2T^{-1}).\end{equation} Using also the assumption $T\leq L^{d}$, it suffices to show that the number choices for $(\sigma_{r+1},\cdots,\sigma_d)$ is at most $O(1+L^{2(d-r)+\theta}T^{-1})$. This latter bound is trivial if $d-r=1$ or $L^2T^{-1}\geq 1$, so we may assume $d-r\geq 2$, $T\geq L^{2}$ and $\beta_i$ is generic. it is well-known in Diophantine approximation (see for example \cite{Cass} Chapter VII) that for generic $\beta_i$ we have
\[\bigg|\sum_{i=r+1}^d\beta_i\eta_i\bigg|\gtrsim\big(\max_{r+1\leq i\leq d}\langle\eta_i\rangle\big)^{-(d-r-1)-\theta}\quad\mathrm{if\ }\eta_i\mathrm{\ are\ not\ all\ }0,\] so the distance between any two points $(\sigma_i:r+1\leq i\leq d)$ and $(\sigma_i':r+1\leq i\leq d)$ satisfying (\ref{neweqn1}) is at least $(L^2T^{-1})^{-\frac{1}{d-r-1}-\theta}$. Since all these points belong to a box which has size $O(1)$ in one direction and size $O(L^{2+\theta})$ in other orthogonal directions, we deduce that the number of solutions to (\ref{neweqn1}) is at most $1+L^\theta L^{2(d-r-1)}L^2T^{-1}$, as desired.

The proof for the bound on $\#S_2$ is easier. In fact, if $T\leq L$ we trivially have $\#S_2\leq L^{d+\theta}$ as $y$ will be fixed once $x$ is; if $T\geq L$, then we may assume $x_d-y_d\neq 0$ if the sign $\pm$ is $-$, and then fix the first coordinates $x_j(1\leq j\leq d-1)$ and hence $y_j(1\leq j\leq d-1)$. Then we have that $x_d\pm y_d$ is fixed, and $x_d^2\pm y_d^2$ belongs to a fixed interval of length $O(T^{-1})$. Since $x_d,y_d\in L^{-1}\mathbb{Z}$, we know that $x_d$ has at most $1+L^2T^{-1}$ choices, which implies what we want to prove.
\end{proof}
  \subsection{Bounds for $\mathcal{J}_n$} In this section we prove Proposition \ref{lwp1}. We will need to extend the notion of ternary trees to \emph{paired, colored} ternary trees, which we define below.
  \begin{df}[Tree pairings and colorings]\label{dfpairing} Let $\Tm$ be a ternary tree as in Definition \ref{tree1}. We will pair some of the leaves of $\Tm$, such that each leaf belongs to at most one pair. The two leaves in a pair are called \emph{partners} of each other, and the unpaired leaves are called \emph{single}. We assume $\iota_{\mathfrak{l}}+\iota_{\mathfrak{l}'}=0$ for any pair $(\mathfrak{l},\mathfrak{l}')$. The set of single leaves is denoted $\mathcal{S}$. The number of pairs is denoted by $p$, so that $|\mathcal S|=l-2p$. Moreover we assume that some nodes in $\mathcal{S}\cup\{\mathfrak{r}\}$ are colored red, and let $\Rm$ be the set of red nodes. We shall denote $r=|\mathcal R|$.
 \end{df}
  \begin{figure}[h!]
  \includegraphics[width=0.58\linewidth]{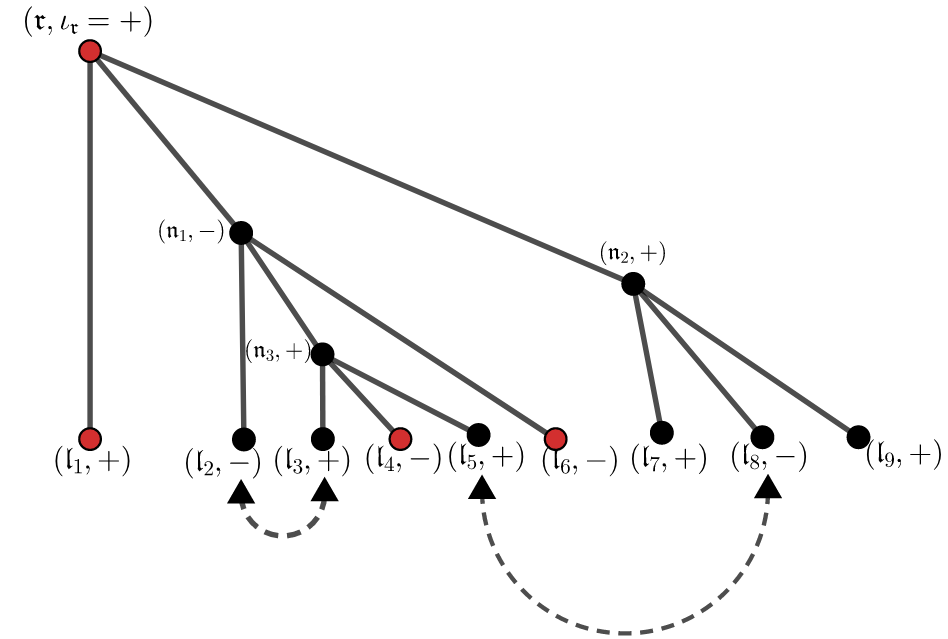}
  \caption{A paired tree with two pairings ($p=2$). The set $\mathcal S$ of single leaves is $\{\mathfrak l_1,\mathfrak l_4,\mathfrak l_6,\mathfrak l_7,\mathfrak l_9 \}$. The subset $\mathcal R\subset \mathcal{S}\cup\{\mathfrak{r}\}$ of red-colored vertices is $\{\mathfrak r, \mathfrak l_1,\mathfrak l_4,\mathfrak l_6\}$. Here $(l, p, r)=(9, 2, 4)$. A strongly admissible assignment with respect to the above pairing, coloring, and to a certain fixed choice of the red modes $(k_{\mathfrak r},k_{\mathfrak l_4},k_{\mathfrak l_6})$, corresponds to having the modes $k_{\mathfrak l_2}=k_{\mathfrak l_3}$, $k_{\mathfrak l_5}=k_{\mathfrak l_8}$, and $|k_{\mathfrak l}|\leq L^\theta$ for all the uncolored leaves. The rest of the modes are determined according to Definition \ref{tree1}.}
  \label{fig:Trees2}
\end{figure}
We shall use the red coloring above to denote that the frequency assignments to the corresponding red vertex is fixed in the counting process. We also introduce the following definition. 
\begin{df}[Strong admissibility]
Suppose we fix $n_{\mathfrak{m}}\in \mathbb{Z}_L^d$ for each $\mathfrak{m}\in\Rm$. An assignment $(k_{\mathfrak{n}}:\mathfrak{n}\in\Tm)$ is called \emph{strongly admissible} with respect to the given pairing, coloring, and $(n_{\mathfrak{m}}:\mathfrak{m}\in\Rm)$, if it is admissible in the sense of Definition \ref{tree1}, and 
\begin{equation}\label{stradm}k_{\mathfrak{m}}=n_{\mathfrak{m}},\,\forall \mathfrak{m}\in\Rm;\quad |k_{\mathfrak{l}}|\leq L^\theta,\,\forall \mathfrak{l}\in\Lm;\quad k_{\mathfrak{l}}=k_{\mathfrak{l}'},\,\forall\text{ pair of leaves $(\mathfrak{l},\mathfrak{l}')$}.
\end{equation}
\end{df}
\medskip

 The key to the proof of Proposition \ref{lwp1} is the following combinatorial counting bound.
\begin{prop}\label{countingbd} Let $\Tm$ be a paired and colored ternary tree such that $\Rm\neq\varnothing$, and let $(n_{\mathfrak{m}}:\mathfrak{m}\in\Rm)$ be fixed. We also fix $\sigma_{\mathfrak{n}}\in\mathbb{R}$ for each $\mathfrak{n}\in\Nm$. Let $l=|\Lm|$ be the total number of leaves, $p$ be the number of pairs, and $r=|\Rm|$ be the number of red nodes. Then, the number of strongly admissible assignments $(k_{\mathfrak{n}}:\mathfrak{n}\in\Tm)$, which also satisfies
\begin{equation}\label{admissibility2}|\Omega_{\mathfrak{n}}-\sigma_{\mathfrak{n}}|\leq T^{-1},\,\forall \mathfrak{n}\in\Nm,
\end{equation} is bounded by (recall $Q$ defined in (\ref{defq}))
\begin{equation}\label{countingbd0}M\leq\left\{\begin{aligned}
&L^\theta Q^{l-p-r},&\mathrm{\ if\ }&\Rm\neq\mathcal{S}\cup\{\mathfrak{r}\},\\
& L^\theta Q^{l-p-r+1},&\mathrm{\ if\ }&\Rm=\mathcal{S}\cup\{\mathfrak{r}\}.
\end{aligned}
\right.
\end{equation}
\end{prop}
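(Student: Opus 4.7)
The plan is to prove the bound by induction on the scale $n=\mathfrak{s}(\Tm)$.

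\emph{Base case.} If $n=0$ then $\Tm$ is a single node, which by $\Rm\neq\varnothing$ must be the red root. The unique admissible assignment $k_{\mathfrak{r}}=n_{\mathfrak{r}}$ agrees with the target bound $L^\theta Q^{l-p-r+1}=L^\theta Q$ since here $\Rm=\mathcal{S}\cup\{\mathfrak{r}\}$.

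\emph{Inductive step.} I would organize the count by a top-down traversal of $\Tm$ that invokes Lemma \ref{counting} at each branching node. At the moment we process $\mathfrak{n}\in\Nm$, some of its three children may already have values pinned down, either because they are red, because they are the second-visited member of a pair whose first member was processed earlier in the traversal, or because an \emph{internal} pairing at $\mathfrak{n}$ (two siblings of $\mathfrak{n}$ forming a pair) forces, via the admissibility exclusion in Definition \ref{tree1}, all four of $k_{\mathfrak{n}},k_{\mathfrak{n}_1},k_{\mathfrak{n}_2},k_{\mathfrak{n}_3}$ to coincide. Let $j(\mathfrak{n})\in\{0,1,2,3\}$ denote the number of still-free children at $\mathfrak{n}$. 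By Lemma \ref{counting}, the enumeration at $\mathfrak{n}$ contributes a local factor of $L^\theta Q^2$ (via the $\#S_3$ bound) when $j=3$, of $L^\theta Q$ (via $\#S_2$) when $j=2$, and of $L^\theta$ when $j\leq 1$, since in that last case the sum constraint $k_{\mathfrak{n}}=k_{\mathfrak{n}_1}-k_{\mathfrak{n}_2}+k_{\mathfrak{n}_3}$ already fixes the single fresh child (or zero of them) up to the resonance window $|\Omega_{\mathfrak{n}}-\sigma_{\mathfrak{n}}|\leq T^{-1}$.

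Taking the product over $\mathfrak{n}\in\Nm$ then yields
\[
M\;\leq\;L^{n\theta}\,Q^{\,\sum_{\mathfrak{n}\in\Nm}\max(0,\,j(\mathfrak{n})-1)}.
\]
The inductive step reduces to verifying the combinatorial identity
\[
\sum_{\mathfrak{n}\in\Nm}\max(0,\,j(\mathfrak{n})-1)\;\leq\;l-p-r,
\]
with an extra $+1$ in the boundary case $\Rm=\mathcal{S}\cup\{\mathfrak{r}\}$. This is proved by summing contributions to $\sum j$: each non-root branching node is a fresh child of its parent (total $n-1$); each leaf contributes to $j$ exactly when it is a non-red single that is not in a pair, or the first-visited member of an external pair (total $(l-2p-r_{\mathcal{L}})+p_{\mathrm{ext}}$, where $r_{\mathcal{L}}$ counts red leaves and $p_{\mathrm{ext}}$ counts external pairs); internal pairs trivialize at their parent thanks to Definition \ref{tree1} and contribute $0$. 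Combining this with subtracting one per branching node processed yields the stated bound, with the $+1$ absorbing the one ``unabsorbable'' free pair-variable in the boundary case.

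The main obstacle, and where the coloring hypothesis $\Rm\neq\varnothing$ enters critically, is handling the case $\mathfrak{r}\notin\Rm$: a naive top-down traversal starting at the root would incur an uncontrolled $L^{d+\theta}$ factor for enumerating $k_{\mathfrak{r}}$, overshooting the target bound. The workaround is to begin the traversal at a red leaf $\mathfrak{l}^*\in\Rm\cap\mathcal{S}$, walk up the tree to the root along the ancestor path, and only then descend into the remaining branches. The upward-processing variant of Lemma \ref{counting} follows from the symmetry of the resonance condition $|\Omega(k_1,k_2,k_3,k)|\leq T^{-1}$ in its four arguments, so the same $Q^2/Q/1$ dichotomy applies at each upward step. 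The careful case analysis needed to orchestrate this hybrid traversal, together with the systematic bookkeeping of how $j(\mathfrak{n})$ is reduced by red colorings, external-pair visits, internal pairings, and their various sibling configurations under the sign rule of Definition \ref{tree1}, is the principal technical content of the proof.
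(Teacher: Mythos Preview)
Your proposal takes a genuinely different route from the paper. The paper proceeds by \emph{structural induction on subtrees}: at the root it splits $\Tm$ into $\Tm_1,\Tm_2,\Tm_3$, and then performs four operations $\mathcal{O}_0,\mathcal{O}_1,\mathcal{O}_2,\mathcal{O}_3$ (root counting via Lemma~\ref{counting}, and three black-box applications of the induction hypothesis to the subtrees) in a carefully chosen order depending on where the red nodes and single leaves lie. After each operation, newly determined nodes are recolored red before the next subtree is attacked. Your scheme instead unrolls the recursion into a single node-by-node traversal and replaces the inductive hypothesis by a global combinatorial identity on the exponents. In principle this could give a more transparent proof, but as written it has real gaps.

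The main gap is the identity $\sum_{\mathfrak{n}}\max(0,j(\mathfrak{n})-1)\leq l-p-r$. Your sketch computes $\sum j(\mathfrak{n})$ and then subtracts $n$, but the passage from $\sum(j-1)$ to $\sum\max(0,j-1)$ costs exactly $\#\{\mathfrak{n}:j(\mathfrak{n})=0\}$, and you give no argument that this quantity is $0$ (or $\leq 1$ in the boundary case). Your accounting of $\sum j$ also has inconsistencies around internal pairs: if two sibling leaves at $\mathfrak{n}$ are paired, admissibility forces all four of $k_{\mathfrak{n}},k_{\mathfrak{n}_1},k_{\mathfrak{n}_2},k_{\mathfrak{n}_3}$ to coincide, so the \emph{third} sibling is also pinned --- yet your formula still counts that third sibling as fresh if it happens to be a branching node (in the ``$n-1$'' term) or a non-red single leaf. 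More seriously, the phenomenon you call ``internal pairing'' is only the simplest instance of a broader collapse: whenever \emph{all} leaves of two sibling subtrees $\Tm_i,\Tm_j$ (with opposite root signs) are paired among themselves, one gets $k_{\mathfrak{r}_i}=k_{\mathfrak{r}_j}$ and hence the full admissibility collapse $k_{\mathfrak{r}}=k_{\mathfrak{r}_1}=k_{\mathfrak{r}_2}=k_{\mathfrak{r}_3}$ at their parent, even though no two sibling \emph{leaves} are paired. The paper handles exactly this in cases (1.2.2) and (2.1.2.2), and it is essential there that one subtree is processed with the weaker bound (the $+1$ case of the induction hypothesis) while the forced collapse at the root compensates. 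Your node-by-node framework does not see this, because the collapse is a nonlocal consequence of the pairing structure that only manifests after an entire subtree has been resolved. Until you can absorb these cases into your $j(\mathfrak{n})$ bookkeeping, the combinatorial identity remains unproved and the argument is incomplete.
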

\begin{proof} We proceed by induction. The base cases directly follow from Lemma \ref{counting}. Now suppose the desired bound holds for all smaller trees, and consider $\Tm$. Let $\mathfrak{r}_1,\mathfrak{r}_2,\mathfrak{r}_3$ be the children of the root node $\mathfrak{r}$, and $\Tm_j$ be the subtree rooted at $\mathfrak{r}_j$. Let $l_j$ be the number of leaves in $\Tm_j$, $p_j$ be the number of pairs within $\Tm_j$, $p_{ij}$ be the number of pairings between $\Tm_i$ and $\Tm_j$ and $r_j=|\mathcal{R}\cap\Tm_j|$, then we have
\[l=l_1+l_2+l_3,\quad p=p_1+p_2+p_3+p_{12}+p_{13}+p_{23},\quad r=r_1+r_2+r_3+\mathbf{1}_{\mathfrak{r}\in\Rm}.\] Also note that $|k_{\mathfrak{n}}|\lesssim L^\theta$ for all $\mathfrak{n}\in\Tm$.

The proof below will be completely algorithmic with the discussion of a lot of cases. The general strategy is to perform the following four operations in a suitable order: (a) apply Lemma \ref{counting} to count the number of choices for the values among $\{k_{\mathfrak{r}},k_{\mathfrak{r}_1},k_{\mathfrak{r}_2},k_{\mathfrak{r}_2}\}$ that is not already fixed (this step may be trivial if three of these four vectors are already fixed (colored), or if one of them is already fixed, and $k_{\mathfrak{r}}=k_{\mathfrak{r}_1}=k_{\mathfrak{r}_2}=k_{\mathfrak{r}_3}$); (b) apply the induction hypothesis to one of the subtrees $\Tm_j$ and count the number of choices for $(k_{\mathfrak{n}}:\mathfrak{n}\in\Tm_j)$. Let these operations be denoted by $\mathcal{O}_j(0\leq j\leq 3)$, and the associated number of choices be $M_j$, with superscripts indicating different cases. In the whole process we may color new nodes $\mathfrak{n}$ red if $k_{\mathfrak{n}}$ is already fixed during the previous operations, namely when $\mathfrak{n}=\mathfrak{r}$ and we have performed $\mathcal{O}_0$ before, or when $\mathfrak{n}=\mathfrak{r}_j$ and we have performed $\mathcal{O}_0$ or $\mathcal{O}_j$ before, or when $\mathfrak{n}$ is a leaf that has a partner in $\Tm_j$ and we have performed $\mathcal{O}_j$ before.

(1) Suppose $\mathfrak{r}\not\in\Rm$, then we may assume that there is a red leaf from\footnote{Strictly speaking the roles of $\Tm_1$ and $\Tm_2$ are not exactly symmetric due to the sign difference, but this will not affect the proof below since Lemma \ref{counting} includes all choices of signs.} $\Tm_1$. We first perform $\mathcal{O}_1$ and get a factor
\[M_1^{(1)}:= L^\theta Q^{l_1-p_1-r_1}.\]Now $\mathfrak{r}_1$ is colored red, as is any leaf in $\Tm_2\cup\Tm_3$ which has a partner in $\Tm_1$. There are then two cases.

(1.1) Suppose now there is a leaf in $\Tm_2\cup\Tm_3$, say from $\Tm_2$, is red. Then we will perform $\mathcal{O}_2$ and get a factor
\[M_2^{(1.1)}:=L^\theta Q^{l_2-p_2-r_2-p_{12}}.\] Now $\mathfrak{r}_2$ is colored red, as is any leaf of $\Tm_3$ which has a partner in $\Tm_2$. There are again two cases:

(1.1.1) Suppose now there is a red leaf in $\Tm_3$, then we will perform $\mathcal{O}_3$ and get a factor
\[M_3^{(1.1.1)}:=L^\theta Q^{l_3-p_3-r_3-p_{13}-p_{23}},\] then color $\mathfrak{r}_3$ red and apply $\mathcal{O}_0$ and get a factor $M_0^{(1.1.1)}:=1$. Thus
\[M\leq M_1^{(1)}M_2^{(1.1)}M_3^{(1.1.1)}M_0^{(1.1.1)}=L^{l-p-r+\theta},\] which is what we need.

(1.1.2) Suppose after the step in (1.1), there is no red leaf in $\Tm_3$, then $r_3=p_{13}=p_{23}=0$. We will perform $\mathcal{O}_0$ and get a factor $M_0^{(1.1.2)}:=L^\theta Q^{1}$ (perhaps with slightly enlarged $\theta$; same below). Now we may color $\mathfrak{r}_3$ red and perform $\mathcal{O}_3$ to get a factor
\[M_3^{(1.1.2)}:=L^\theta Q^{l_3-p_3-1}.\] Thus
\[M\leq M_1^{(1)}M_2^{(1.1)}M_0^{(1.1.2)}M_3^{(1.1.2)}=L^\theta Q^{l-p-r},\] which is what we need.

(1.2) Now suppose that after the step in (1), there is no red leaf in $\Tm_2\cup\Tm_3$, then $r_2=r_3=p_{12}=p_{13}=0$. There are two cases:

(1.2.1) Suppose there is a single leaf in $\Tm_2\cup\Tm_3$, say from $\Tm_2$. Then we will perform $\mathcal{O}_0$ and get a factor $M_0^{(1.2.1)}:=L^\theta Q^{2}$. Now we may color $\mathfrak{r}_2$ and $\mathfrak{r}_3$ red and perform $\mathcal{O}_3$ to get a factor
\[M_3^{(1.2.1)}:=L^\theta Q^{l_3-p_3-1}.\] Now any leaf of $\Tm_2$ which has a partner in $\Tm_3$ is colored red, so we may perform $\mathcal{O}_2$ and get a factor
\[M_2^{(1.2.1)}:=L^\theta Q^{l_2-p_2-p_{23}-1}.\] Thus 
\[M\leq M_1^{(1)}M_0^{(1.2.1)}M_3^{(1.2.1)}M_2^{(1.2.1)}=L^\theta Q^{l-p-r},\] which is what we need.

(1.2.2) Suppose there is no single leaf in $\Tm_2\cup\Tm_3$, then all leaves in $\Tm_2\cup\Tm_3$ are paired to one another, which implies $k_{\mathfrak{r}_2}=k_{\mathfrak{r}_3}$ and $\mathfrak{r}_2$ and $\mathfrak{r}_3$ have opposite signs, hence by admissibility condition we must have $k_{\mathfrak{r}}=k_{\mathfrak{r}_1}=k_{\mathfrak{r}_2}=k_{\mathfrak{r}_3}$. This allows us to perform $\mathcal{O}_0$ and color $\mathfrak{r}_2$ and $\mathfrak{r}_3$ red with $M_0^{(1.2.2)}:=1$, then perform $\mathcal{O}_3$ and color red any leaf of $\Tm_2$ which has a partner in $\Tm_3$, and then perform $\mathcal{O}_2$ (for which we use the second bound in \eqref{countingbd0}). This leads to the factors
\[M_3^{(1.2.2)}:=L^\theta Q^{l_3-p_3-1},\quad M_2^{(1.2.2)}\leq L^\theta Q^{l_2-p_2-p_{23}-1+1},\] thus
\[M\leq M_1^{(1)}M_0^{(1,2,2)}M_3^{(1.2.2)}M_2^{(1.2.2)}=L^\theta Q^{l-p-r-1},\] which is better than what we need.

(2) Now suppose $\mathfrak{r}\in\Rm$, then $r=r_1+r_2+r_3+1$. There are two cases.

(2.1) Suppose there is one single leaf that is not red, say from $\Tm_1$. There are again two cases.

(2.1.1) Suppose there is a red leaf in $\Tm_2\cup\Tm_3$, say $\Tm_2$. Then we perform $\mathcal{O}_2$ and get a factor
\[M_2^{(2.1.1)}:=L^\theta Q^{l_2-p_2-r_2}.\] We now color red $\mathfrak{r}_2$ and any leaf in $\Tm_1\cup\Tm_3$ which has a partner in $\Tm_2$. There are further two cases:

(2.1.1.1) Suppose now there is a red leaf in $\Tm_3$, then we perform $\mathcal{O}_3$ and get a factor
\[M_3^{(2.1.1.1)}:= L^\theta Q^{l_3-p_3-r_3-p_{23}}.\] Now we perform $\mathcal{O}_0$ and get a factor $M_0^{(2.1.1.1)}:=1$, then color red $\mathfrak{r}_1$ as well as any leaf of $\Tm_1$ which has a partner in $\Tm_3$, and perform $\mathcal{O}_1$ to get a factor
\[M_1^{(2.1.1.1)}:=L^\theta Q^{l_1-p_1-r_1-p_{12}-p_{13}-1}.\] Thus
\[M\leq M_2^{(2.1.1)}M_3^{(2.1.1.1)}M_0^{(2.1.1.1)}M_{1}^{(2.1.1.1)}=L^\theta Q^{l-p-r},\] which is what we need.

(2.1.1.2) Suppose after the step in (2.1.1) there is no red leaf in $\Tm_3$, then $r_3=p_{23}=0$. We perform $\mathcal{O}_0$ and get a factor $M_0^{(2.1.1.2)}:=L^\theta Q^{1}$. Then we color $\mathfrak{r}_1$ and $\mathfrak{r}_3$ red and perform $\mathcal{O}_3$ to get a factor
\[M_3^{(2.1.1.2)}:=L^\theta Q^{l_3-p_3-1}.\] Finally we color red any leaf of $\Tm_1$ which has a partner in $\Tm_3$, and perform $\mathcal{O}_1$ to get a factor
\[M_1^{(2.1.1.2)}:=L^\theta Q^{l_1-p_1-r_1-p_{12}-p_{13}-1}.\]
Thus
\[M\leq M_2^{(2.1.1)}M_0^{(2.1.1.2)}M_3^{(2.1.1.2)}M_{1}^{(2.1.1.2)}=L^\theta Q^{l-p-r},\] which is what we need.

(2.1.2) Suppose in the beginning there is no red leaf in $\Tm_2\cup\Tm_3$, then $r_2=r_3=0$. There are again two cases.

(2.1.2.1) Suppose there is a leaf in $\Tm_2\cup\Tm_3$, say from $\Tm_2$, that is either single or paired with a leaf in $\mathcal{T}_1$. Then we will perform $\mathcal{O}_0$ and get a factor $M_0^{(2.1.2.1)}:=L^\theta Q^{2}$. After this we will color $\mathfrak{r}_1,\mathfrak{r}_2,\mathfrak{r}_3$ red and perform $\mathcal{O}_3$ to get a factor
\[M_3^{(2.1.2.1)}:= L^\theta Q^{l_3-p_3-1}.\] We then color red any leaf of $\Tm_1$ and $\Tm_2$ which has a partner in $\Tm_3$ and perform $\mathcal{O}_2$ to get a factor
\[M_2^{(2.1.2.1)}:= L^\theta Q^{l_2-p_2-p_{23}-1}.\] Finally we color red any leaf of $\Tm_1$ which has a partner in $\Tm_2$, and perform $\mathcal{O}_1$ to get a factor
\[M_{1}^{(2.1.2.1)}:= L^\theta Q^{l_1-p_1-r_1-p_{12}-p_{13}-1}.\] Thus
\[M\leq M_0^{(2.1.2.1)}M_3^{(2.1.2.1)}M_2^{(2.1.2.1)}M_{1}^{(2.1.2.1)}=L^\theta Q^{l-p-r},\] which is what we need.

(2.1.2.2) Suppose there is no leaf in $\Tm_2\cup\Tm_3$ that is either single or paired with a leaf in $\mathcal{T}_1$, then in the same way as (1.2.2) we must have $k_{\mathfrak{r}}=k_{\mathfrak{r}_1}=k_{\mathfrak{r}_2}=k_{\mathfrak{r}_3}$. Moreover we have $p_{12}=p_{13}=0$. Then we will perform $\mathcal{O}_0$ and get a factor $M_0^{(2.1.2.2)}:=1$. After this we will color $\mathfrak{r}_1,\mathfrak{r}_2,\mathfrak{r}_3$ red and perform $\mathcal{O}_3$ to get a factor \[M_3^{(2.1.2.2)}:= L^\theta Q^{l_3-p_3-1}.\] We then color red any leaf of $\Tm_2$ which has a partner in $\Tm_3$ and perform $\mathcal{O}_2$ to get a factor
\[M_2^{(2.1.2.2)}\leq L^\theta Q^{l_2-p_2-p_{23}-1+1}.\] Finally we perform $\mathcal{O}_1$, again using the second part of estimate \eqref{countingbd0}, to get a factor
\[M_{1}^{(2.1.2.2)}:= L^\theta Q^{l_1-p_1-r_1-1}.\]
 Thus
\[M\leq M_0^{(2.1.2.2)}M_3^{(2.1.2.2)}M_2^{(2.1.2.2)}M_{1}^{(2.1.2.2)}=L^\theta Q^{l-p-r-1},\] which is better than what we need.

(2.2) Now suppose that in the beginning all single leaves are red, i.e. $\Rm=\mathcal{S}\cup\{\mathfrak{r}\}$. Then we can argue in exactly the same way as in (2.1), except that in the last step where we perform $\mathcal{O}_1$, it may happen that the root $\mathfrak{r}_1$ as well as all leaves of $\Tm_1$ are red at that time, so we lose one power of $Q$ in view of the weaker bound from the induction hypothesis. However since $\Rm=\mathcal{S}\cup\{\mathfrak{r}\}$, we are in fact allowed to lose this power, so we can still close the inductive step, in the same way as (2.1). This completes the proof.
\end{proof}
\begin{cor}\label{imprcor} In Proposition \ref{countingbd}, suppose $\mathcal{R}=\{\mathfrak{r}\}$. Then (\ref{countingbd0}) can be improved to
\begin{equation}\label{improved}M\leq L^\theta Q^{l-p-3}L^{2d}T^{-1}.
\end{equation}
\end{cor}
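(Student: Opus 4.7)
The plan is to revisit the inductive algorithm from the proof of Proposition~\ref{countingbd} in the special case $\mathcal{R}=\{\mathfrak{r}\}$, and to substitute, at one strategically chosen step, the sharp bound $\#S_3 \lesssim L^{2d+\theta}T^{-1}$ from Lemma~\ref{counting} in place of the cruder consequence $\#S_3 \lesssim L^\theta Q^2$ that follows from (\ref{defq}) and was used inside Proposition~\ref{countingbd}. The defining relation $\rho \geq \alpha T^{1/2}$ yields
\begin{equation*}
\frac{L^{2d+\theta}T^{-1}}{L^\theta Q^2}=\frac{\alpha^2 T}{\rho^2}\leq 1,
\end{equation*}
so each such substitution is a genuine improvement of magnitude $\alpha^2 T/\rho^2$, and one well-placed application will produce the extra factor $L^{2d}T^{-1}$ in (\ref{improved}).

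With $\mathcal{R}=\{\mathfrak{r}\}$, no leaves are initially red, so only the sub-cases (2.1.2.1), (2.1.2.2), and (2.2) of the proof of Proposition~\ref{countingbd} can arise. In the ``generic'' sub-case (2.1.2.1) (and the analog of (2.1.2.1) within (2.2)), the very first operation $\mathcal{O}_0$ at the root invokes Lemma~\ref{counting} to count triples $(k_{\mathfrak{r}_1},k_{\mathfrak{r}_2},k_{\mathfrak{r}_3})$ given $k_\mathfrak{r}$, and was bounded by $L^\theta Q^2$. The plan is simply to replace this factor by the sharp estimate $L^{2d+\theta}T^{-1}$; all subsequent operations $\mathcal{O}_3,\mathcal{O}_2,\mathcal{O}_1$ proceed unchanged, yielding the total bound $L^\theta Q^{l-p-3}\cdot L^{2d+\theta}T^{-1}$, which is exactly (\ref{improved}).

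In the remaining ``degenerate'' sub-cases (2.1.2.2) and its analog within (2.2), the admissibility at the root forces $k_\mathfrak{r}=k_{\mathfrak{r}_1}=k_{\mathfrak{r}_2}=k_{\mathfrak{r}_3}$, which makes $\mathcal{O}_0$ trivial and prevents any $\#S_3$ gain at the root. Here the plan is to process $\Tm_3$ and then $\Tm_2$ first exactly as in the original proof of Proposition~\ref{countingbd} (using its first or second case according to whether inter-subtree pairings $p_{23}$ are present), and then apply the present corollary recursively to $\Tm_1$ with $\mathcal{R}_1=\{\mathfrak{r}_1\}$. This recursive call is legitimate because the structural assumptions of these sub-cases forbid any leaf of $\Tm_1$ from being paired with a leaf of $\Tm_2\cup\Tm_3$, so no additional leaves of $\Tm_1$ become red during the prior processing of $\Tm_2,\Tm_3$. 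The recursion supplies the missing factor $L^{2d+\theta}T^{-1}$, and a short bookkeeping computation shows that the $Q$-exponents across the three subtrees sum to exactly $l-p-3$, recovering (\ref{improved}).

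The main obstacle lies in this recursion: one must carefully verify that the hypotheses of the corollary really do hold on the smaller tree $\Tm_1$ at the moment of its processing, and that the first/second-case distinction in the Proposition~\ref{countingbd} bounds for $\Tm_2,\Tm_3$ combines correctly with the $Q^{l_1-p_1-3}L^{2d+\theta}T^{-1}$ bound obtained recursively on $\Tm_1$. The only truly exceptional boundary case is when $\Tm_1$ reduces to a single leaf, so that the recursion cannot be invoked; there the conclusion follows directly from Proposition~\ref{countingbd} together with the elementary inequality $Q\leq L^{2d+\theta}T^{-1}$, which in turn holds throughout the admissible parameter range (\ref{adm1}) for $d\geq 2$.
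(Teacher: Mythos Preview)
Your argument is correct, and in case (2.1.2.1) it coincides exactly with the paper's. There are two minor points of divergence worth noting.

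First, case (2.2) cannot arise here: since $l=2n+1$ is odd, not all leaves can be paired, so $\mathcal{S}\neq\varnothing$; with $\mathcal{R}=\{\mathfrak{r}\}$ this non-red single leaf places us in (2.1), hence in (2.1.2) as you say. This does not affect your proof, only trims it.

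Second, for the degenerate case (2.1.2.2) your recursion on $\Tm_1$ works (and in fact the $Q$-exponent you obtain is $l-p-4$, not $l-p-3$, which is only stronger since $Q\geq 1$), but the paper avoids the recursion entirely. It simply recalls that the proof of Proposition~\ref{countingbd} in sub-case (2.1.2.2) already produced the improved bound $M\leq L^\theta Q^{l-p-2}$ (one full power of $Q$ better than the generic $Q^{l-p-r}=Q^{l-p-1}$), and then applies the elementary inequality $Q\leq L^{2d}T^{-1}$ once to obtain $Q^{l-p-2}\leq Q^{l-p-3}L^{2d}T^{-1}$. This is exactly the inequality you invoke in your boundary case ``$\Tm_1$ a single leaf'', but the paper observes it suffices uniformly, with no induction on the subtree needed. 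Your approach buys nothing extra here, though it would be the natural route if one wanted a refinement sharper than (\ref{improved}).
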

\begin{proof} In the proof of Proposition \ref{countingbd}, we are now in case (2.1.2). In each sub-case, either (2.1.2.1) or (2.1.2.2) we are performing the operation $\mathcal{O}_0$ first. In case (2.1.2.1) by Lemma \ref{counting} we can replace the bound $M_0^{(2.1.2.1)}$ by $M_0':= L^\theta L^{2d}T^{-1}$, so we get
\[M\leq M_0'M_3^{(2.1.2.1)}M_2^{(2.1.2.1)}M_{1}^{(2.1.2.1)}=L^\theta Q^{l-p-3}L^{2d}T^{-1}.\] In case (2.1.2.2) we get an improvement, namely in this case we have $M\leq L^\theta Q^{l-p-2}$, which also implies (\ref{improved}), since we can check $Q\leq L^{2d}T^{-1}\leq Q^2$ by definition.
\end{proof}
Now we are ready to prove Proposition \ref{lwp1}.
\begin{proof}[Proof of Proposition \ref{lwp1}] We start with the formula (\ref{formulajt}). Let $|\Tm|=3n+1$. Due to the rapid decay of $\sqrt{n_{\mathrm{in}}}$, we may assume in the summation that $|k_{\mathfrak{l}}|\leq L^\theta$ for any $\mathfrak{l}\in\Lm$, and so $|k|\leq L^\theta$ also. For any \emph{fixed} value of $\tau$, we may apply Lemma \ref{largedev} to $L$-certainly estimate $(\widetilde{\Jm_{\Tm}})_k(\tau)$. Namely, $L$-certainly we have, for some choice of pairing and with coloring $\Rm=\{\mathfrak{r}\}$ and $n_{\mathfrak{r}}=k$, that
\begin{equation}\label{pointwisebd}\langle k\rangle^{4s}|(\widetilde{\Jm_{\Tm}})_k(\tau)|^2\leq L^\theta\bigg(\frac{\alpha T}{L^{d}}\bigg)^{2n}\sum_{(k_{\mathfrak{l}}:\mathfrak{l}\in\mathcal{S})}\bigg[\sum_{(k_{\mathfrak{l}}:\mathfrak{l}\in\Lm\backslash\mathcal{S})}^{**}\big|\mathcal{K}_{\Tm}(\tau,k_{\mathfrak{n}}:\mathfrak{n}\in\Tm)\big|\bigg]^2,
\end{equation} where $\sum^{**}$ represents summation under the condition that the unique admissible assignment determined by $(k_{\mathfrak{l}}:\mathfrak{l}\in\mathcal{S})$ and $(k_{\mathfrak{l}}:\mathfrak{l}\in\Lm\backslash\mathcal{S})$ is strongly admissible. Next we would like to assume (\ref{pointwisebd}) for \emph{all} $\tau$, which can be done by the following trick. First due to the decay factor in (\ref{boundk}) and the assumption $|k_{\mathfrak{l}}|\leq L^\theta$, we may assume $|\tau|\leq L^{d+\theta}$; moreover, choosing a large power $D$, we may divide the interval $[-L^\theta,L^\theta]$ into subintervals of length $L^{-D}$ and pick one point $\tau_j$ from each interval. Due to the differentiability of $\mathcal{K}_{\Tm}$, see (\ref{boundk}), we can bound the difference \[\big|\mathcal{K}_{\Tm}(\tau,k_{\mathfrak{n}}:\mathfrak{n}\in\Tm)-\mathcal{K}_{\Tm}(\tau_j,k_{\mathfrak{n}}:\mathfrak{n}\in\Tm)\big|\] by a large negative power of $L$ provided $\tau$ is in the same interval as $\tau_j$. Therefore, as long as (\ref{pointwisebd}) is true for each $\tau_j$ we can assume it is true for each $\tau$ up to negligible errors. Since the number of $\tau_j$'s is at most $O(L^{2D})$ and (\ref{pointwisebd}) holds $L$-certainly for each fixed $\tau_j$, we conclude that $L$-certainly, (\ref{pointwisebd}) holds for all $\tau$.

Now, by expanding the square in (\ref{pointwisebd}), it suffices to bound the quantity
\[\int_{\mathbb{R}}\langle\tau\rangle^{2b}\sum_{(k_{\mathfrak{l}}:\mathfrak{l}\in\mathcal{S})}\sum_{(k_{\mathfrak{l}}:\mathfrak{l}\in\Lm\backslash\mathcal{S})}^{**}\sum_{(k_{\mathfrak{l}}':\mathfrak{l}\in\Lm\backslash\mathcal{S})}^{**'}\big|\mathcal{K}_{\Tm}(\tau,k_{\mathfrak{n}}:\mathfrak{n}\in\Tm)\big|\cdot\big|\mathcal{K}_{\Tm}(\tau,k_{\mathfrak{n}}':\mathfrak{n}\in\Tm)\big|\,\mathrm{d}\tau,\] where $(k_{\mathfrak{n}}:\mathfrak{n}\in\Tm)$ is the unique admissible assignment determined by $(k_{\mathfrak{l}}:\mathfrak{l}\in\Lm)$ and $(k_{\mathfrak{l}}:\mathfrak{l}\in\Lm\backslash\mathcal{S})$, and $(k_{\mathfrak{n}}':\mathfrak{n}\in\Tm)$ is the one determined by $(k_{\mathfrak{l}}:\mathfrak{l}\in\Lm)$ and $(k_{\mathfrak{l}}':\mathfrak{l}\in\Lm\backslash\mathcal{S})$. The conditions in the summations $\sum^{**}$ and $\sum^{**'}$ correspond to these two assignments being strongly admissible. By (\ref{boundk}) we have (for some choice of $d_{\mathfrak{n}}$)
\[\langle\tau\rangle^{2b}\big|\mathcal{K}_{\Tm}(\tau,k_{\mathfrak{n}}:\mathfrak{n}\in\Tm)\big|\cdot\big|\mathcal{K}_{\Tm}(\tau,k_{\mathfrak{n}}':\mathfrak{n}\in\Tm)\big|\lesssim\langle\tau\rangle^{2b}\langle \tau-Td_{\mathfrak{r}}q_{\mathfrak{r}}\rangle^{-10}\langle \tau-Td_{\mathfrak{r}}q_{\mathfrak{r}}'\rangle^{-10}\prod_{\mathfrak{n}\in\Nm}\langle Tq_{\mathfrak{n}}\rangle^{-1}\langle Tq_{\mathfrak{n}}'\rangle^{-1},\] where $q_{\mathfrak{n}}$ and $q_{\mathfrak{n}}'$ are defined from the assignment $(k_{\mathfrak{n}})$ and $(k_{\mathfrak{n}}')$ respectively via (\ref{defqn}). Thus the integral in $\tau$ gives
\[\max(\langle Tq_{\mathfrak{r}}\rangle,\langle Tq_{\mathfrak{r}}'\rangle)^{-2+2b}\langle T(q_{\mathfrak{r}}-q_{\mathfrak{r}}')\rangle^{-5}\prod_{\mathfrak{r}\neq\mathfrak{n}\in\Nm}\langle Tq_{\mathfrak{n}}\rangle^{-1}\langle Tq_{\mathfrak{n}}'\rangle^{-1},\] and it suffices to bound 
\[\sum_{(k_{\mathfrak{l}}:\mathfrak{l}\in\mathcal{S})}\sum_{(k_{\mathfrak{l}}:\mathfrak{l}\in\Lm\backslash\mathcal{S})}^{**}\sum_{(k_{\mathfrak{l}}':\mathfrak{l}\in\Lm\backslash\mathcal{S})}^{**'}\max(\langle Tq_{\mathfrak{r}}\rangle,\langle Tq_{\mathfrak{r}}'\rangle)^{-2+2b}\langle T(q_{\mathfrak{r}}-q_{\mathfrak{r}}')\rangle^{-5}\prod_{\mathfrak{r}\neq\mathfrak{n}\in\Nm}\langle Tq_{\mathfrak{n}}\rangle^{-1}\langle Tq_{\mathfrak{n}}'\rangle^{-1}.\]Since all the $q$'s are bounded by $L^\theta$, and $T\leq L^d$, we may fix the integer parts of each $Tq_{\mathfrak{n}}$ and $Tq_{\mathfrak{n}}'$ for each $\mathfrak{n}\in\Nm$, and reduce the above sum to a counting bound, at the price of losing a power $L^{C(b-\frac{1}{2})}$. Now by definition (\ref{defqn}), each $q_{\mathfrak{n}}$ is a linear combination of $\Omega_{\mathfrak{n}}$'s and conversely each $\Omega_{\mathfrak{n}}$ is a linear combination of $q_{\mathfrak{n}}$'s. So once the integer parts of each $Tq_{\mathfrak{n}}$ and $Tq_{\mathfrak{n}}'$ is fixed, we have also fixed $\sigma_{\mathfrak{n}}\in\mathbb{R}$ and $\sigma_{\mathfrak{n}}'\in\mathbb{R}$ such that \begin{equation}\label{extra}|\Omega_{\mathfrak{n}}-\sigma_{\mathfrak{n}}|\leq T^{-1},\quad |\Omega_{\mathfrak{n}}'-\sigma_{\mathfrak{n}}'|\leq T^{-1}.\end{equation}

Therefore we are reduced to counting the number of $(k_{\mathfrak{l}}:\mathfrak{l}\in\mathcal{S})$, $(k_{\mathfrak{l}}:\mathfrak{l}\in\Lm\backslash\mathcal{S})$ and $(k_{\mathfrak{l}}':\mathfrak{l}\in\Lm\backslash\mathcal{S})$ such that the assignments $(k_{\mathfrak{n}})$ and $(k_{\mathfrak{n}}')$ are both strongly admissible and satisfy (\ref{extra}). Now let $|\Lm|=l=2n+1$ and $p$ be the number of pairs, then $|\mathcal{S}|=2n+1-2p$. First we count the number of choices for $(k_{\mathfrak{l}}:\mathfrak{l}\in\mathcal{S})$ and $(k_{\mathfrak{l}}:\mathfrak{l}\in\Lm\backslash\mathcal{S})$, where we apply Corollary \ref{imprcor} with $\Rm=\{\mathfrak{r}\}$ and get the factor $M:=L^\theta Q^{2n-p-2}L^{2d}T^{-1}$; then, with $k_{\mathfrak{l}}$ fixed for all $\mathfrak{l}\in\mathcal{S}$, we will count the number of choices for $(k_{\mathfrak{l}}':\mathfrak{l}\in\Lm\backslash\mathcal{S})$ by applying Proposition \ref{countingbd} with $\Rm=\mathcal{S}\cup\{\mathfrak{r}\}$ and get the factor $M':= L^{\theta}Q^p$. In the end we get that $L$-certainly,
\begin{multline*}\sup_k\langle k\rangle^{4s}|(\widetilde{\Jm_{\Tm}})_k(\tau)|^2\leq L^{\theta+C(b-\frac{1}{2})}\bigg(\frac{\alpha T}{L^{d}}\bigg)^{2n}MM'\\\leq L^{\theta+C(b-\frac{1}{2})}\bigg(\frac{\alpha T}{L^{d}}\bigg)^{2n}Q^{2n-2}L^{2d}T^{-1}=L^{\theta+C(b-\frac{1}{2})}\rho^{2n-2}(\alpha^2T)\end{multline*} by the definition of $Q$ in (\ref{defq}), as desired.
\end{proof}
  \subsection{Bounds for $\mathcal{P}_{\pm}$}\label{operatorsec} In this section we prove Proposition \ref{lwp3}. The proof for $\mathcal{P}_{\pm}$ are similar, so we will only consider $\mathcal{P}_+$. 
  
\begin{proof}[Proof of Proposition \ref{lwp3}]
\underline{Step 1: First reductions.} We start with some simple observations. The operator $\mathcal{P}_+(v)=\mathcal{IW}(\Jm_{\Tm_1},\Jm_{\Tm_2},v)$, where $\mathcal{I}$ and $\mathcal{W}$ are defined in (\ref{Duhamel00}) and (\ref{Duhamel0}). Now in (\ref{Duhamel0}) we may assume $|k_1|,|k_2|\leq L^\theta$ due to the same reason as in the proof of Proposition \ref{lwp1}. As such, we have  \[L^{-\theta}\leq\frac{\langle k\rangle^s}{\langle k_3\rangle^s}\leq L^{\theta},\] so instead of $h^{s,b}$ bounds we only need to consider $h^{0,b}$ bounds. Next notice that, if $\mathcal{I}$ is defined by (\ref{Duhamel00}) and $\mathcal{I}_1$ is defined by $\mathcal{I}_1F=\chi\cdot(\mathrm{sgn}*(\chi\cdot F))$, then we have the identity $2\mathcal{I}F(t)=\mathcal{I}_1F(t)-\chi(t)\mathcal{I}_1F(0)$, so for $b>\frac{1}{2}$ we have $\|\mathcal{I}F\|_{h^{s,b}}\lesssim \|\mathcal{I}_1F\|_{h^{s,b}}$. Therefore, in estimating $\mathcal{P}_+$ we may replace the  operator $G$ that appears in the formula for $\mathcal{I}$ by $\mathcal{I}_1$. The advantage is that $\mathcal{I}_1$ has a formula
\[\widetilde{\mathcal{I}_1F}(\tau)=\int_{\mathbb{R}}I_1(\tau,\sigma)\widetilde{F}(\sigma)\,\mathrm{d}\sigma\] where $I_1$ is as in Lemma \ref{duhamfourier}, so we may get rid of the $I_0$ term. From now on we will stick to the renewed definition of $\mathcal{I}$. Next, by Proposition \ref{lwp1} we have the trivial bound
\begin{multline*}\|\mathcal{P}_+v\|_{h^{0,1}}\sim\|\mathcal{P}_+v\|_{\ell_k^2L_t^2}+\|\partial_t\mathcal{P}_+v\|_{\ell_k^2L_t^2}\\\lesssim\|v\|_{\ell_k^2L_t^2}\cdot\frac{\alpha T}{L^{d}}\sum_{k_1,k_2}\|(\Jm_{\Tm_1})_{k_1}\|_{L_t^\infty}\|(\Jm_{\Tm_2})_{k_2}\|_{L_t^\infty}\lesssim\alpha TL^{d+\theta+C(b-\frac{1}{2})}\rho^{n_1+n_2} \cdot\|v\|_{h^{0,b}}.\end{multline*} Note also that $\alpha T\leq L^{d}$ and $\rho\leq L^{-\varepsilon}$, so by interpolation it suffices to $L$-certainly bound the $h^{0,b}\to h^{0,1-b}$ norm of (the renewed version of) $\mathcal{P}_+$ by $L^\theta\rho^{n_1+n_2+1}$.

Now, using Lemma \ref{duhamfourier} and noticing that the bound (\ref{duhamelbound}) is symmetric in $\sigma$ and $\tau$, we have the formula 
\begin{multline}\label{formulap1}
(\widetilde{\mathcal{P}_+v})_k(\tau)=\frac{i\alpha T}{L^{d}}\langle\tau\rangle^{-1}\sum_{(m_1,m_2,k')}^*\int_{\mathbb{R}^3}J(\tau,\sigma_1-\sigma_2+\tau'+T\Omega(m_1,m_2,k',k))\\\times(\widetilde{\Jm_{\Tm_1}})_{m_1}(\sigma_1)\overline{(\widetilde{\Jm_{\Tm_2}})_{m_2}(\sigma_2)}\cdot\widetilde{v}_{k'}(\tau')\,\mathrm{d}\sigma_1\mathrm{d}\sigma_2\mathrm{d}\tau',\end{multline} where $J=J(\tau,\eta)$, as well as all its derivatives, are bounded by $\langle\tau-\eta\rangle^{-10}$. By elementary estimates we have
\begin{equation}\label{holder}\|\widetilde{w}_{k}(\tau)\|_{L_\tau^1\ell_k^2}\lesssim\|\langle \tau\rangle^b\widetilde{w}_{k}(\tau)\|_{\ell_k^2L_\tau^2},\quad\|\langle\tau\rangle^{1-b}\langle\tau\rangle^{-1}w_k(\tau)\|_{\ell_k^2L_\tau^2}\lesssim \|\widetilde{w}_{k}(\tau)\|_{L_\tau^\infty\ell_k^2},\end{equation} and thus it suffices to $L$-certainly bound the $\ell^2\to\ell^2$ norm of the operator
\begin{multline}\mathcal{X}:(\mathcal{X}v)_k=\frac{\alpha T}{L^{d}}\sum_{(m_1,m_2,k')}^*v_{k'}\cdot\int_{\mathbb{R}^2}J(\tau,\sigma_1-\sigma_2+\tau'+T\Omega(m_1,m_2,k',k))\\\times (\widetilde{\Jm_{\Tm_1}})_{m_1}(\sigma_1)\overline{(\widetilde{\Jm_{\Tm_2}})_{m_2}(\sigma_2)}\,\mathrm{d}\sigma_1\mathrm{d}\sigma_2\end{multline} \emph{uniformly} in $\tau$ and $\tau'$. 
\medskip

\underline{Step 2: Second reductions.} At this point we will apply similar arguments as in the proof of Proposition \ref{lwp1}. Namely we first restrict $|\tau|,|\tau'|\leq L^{\theta^{-1}}$ (otherwise we can gain a power of either $|\tau|^{\frac{1}{2}(b-\frac{1}{2})}$ or $|\tau'|^{\frac{1}{2}(b-\frac{1}{2})}$ from the extra room when applying (\ref{holder}) which turns into a large power of $L$ and closes the whole estimate), and then divide this interval into subintervals of length $L^{-\theta^{-1}}$ and apply differentiability to reduce to $O(L^{C\theta^{-1}})$ choices of $(\tau,\tau')$. Therefore, it suffices to \emph{fix} $\tau$ and $\tau'$ and $L$-certainly bound $\|\mathcal{X}\|_{\ell^2\to\ell^2}$. Let $\tau-\tau'=\zeta$ be fixed.

Now use (\ref{formulajt}) for the $\Jm_{\Tm_j}$ factors, assuming also $|k_{\mathfrak{l}}|\leq L^\theta$ in each tree, and integrate in $(\sigma_1,\sigma_2)$. This leads to further reduced expression for $\mathcal{X}$, which can be described as follows. First let the tree $\Tm$ be defined such that its root is $\mathfrak{r}$ and three subtrees from left to right are $\Tm_1$, $\Tm_2$ and a single node $\mathfrak{r}'$. Then we have
\[(\mathcal{X}v)_k=\sum_{k'}\mathcal{X}_{kk'}v_{k'},\] where the matrix coefficients are given by
\[\mathcal{X}_{kk'}=\bigg(\frac{\alpha T}{L^{d}}\bigg)^{n_1+n_2+1}\sum_{(k_{\mathfrak{n}}:\mathfrak{n}\in\Tm)}\mathcal{K}(|k|_\beta^2-|k'|_\beta^2,\,k_{\mathfrak{l}}:\mathfrak{r}'\neq\mathfrak{l}\in\Lm)\cdot\frac{1}{\langle Tq_{\mathfrak{r}}-\zeta\rangle^5}\prod_{\mathfrak{n}\in\Nm\setminus \{\mathfrak{r}\}}\frac{1}{\langle Tq_{\mathfrak{n}}\rangle}\prod_{\mathfrak{l}\in\Lm\setminus \{\mathfrak{r}'\}}\eta_{k_{\mathfrak{l}}}^{\iota_{\mathfrak{l}}},\]

where the sum is taken over all admissible assignments $(k_{\mathfrak{n}}:\mathfrak{n}\in\Tm)$ which satisfies $k_{\mathfrak{r}}=k$, $k_{\mathfrak{r}'}=k'$ and $|k_{\mathfrak{n}}|\leq L^{\theta}$ for $\mathfrak{n}\not\in\{\mathfrak{r},\mathfrak{r}'\}$, and the coefficient satisfies $|\mathcal{K}|\leq L^{\theta}$ and $|\partial \mathcal{K}|\leq L^\theta T$. Moreover, we observe that $\mathcal{K}$ and $q_{\mathfrak r}$ depends on the variables $k_{\mathfrak{r}}=k$ and $k_{\mathfrak{r}'}=k'$ \emph{only through the quantity} $|k|_\beta^2-|k'|_\beta^2$.

Next, we argue in the same way as in the proof of Proposition \ref{lwp1} and fix the integer parts of $Tq_{\mathfrak{n}}$ for $\mathfrak{n}\in\Nm\setminus \{\mathfrak{r}\}$, as well as the integer part of $Tq_{\mathfrak{r}}-\zeta$, at a cost of $(\log L)^{O(1)}$. All these can be assumed $\leq L^{\theta^{-1}}$ due to the decay $\langle Tq_{\mathfrak{r}}-\zeta\rangle^{-5}$ and the bounds on $\tau$ and $\tau'$. This is equivalent to fixing some real numbers $\sigma_{\mathfrak{n}}=O(L^{\theta^{-1}})$ and requiring the assignment $(k_{\mathfrak{n}}:\mathfrak{n}\in\Tm)$ to satisfy that $|\Omega_{\mathfrak{n}}-\sigma_{\mathfrak{n}}|\leq T^{-1}$ for each $\mathfrak{n}\in\Nm$. Let this final operator, obtained by all the previous reductions, be $\mathcal{G}$. Schematically, the operator $\mathcal{G}$ can be viewed as ``attaching two trees'' $\Tm_1$ and $\Tm_2$ to a single node $\mathfrak{r}'$.
\medskip

{\underline{Step 3: The high order $\mathcal{G}\mathcal{G}^*$ argument.}} For this, we consider the adjoint operator $\mathcal{G}^*$. Similar argument gives a formula for $\mathcal{G}^*$, which is associated with a tree $\mathcal{T}^*$ formed by attaching the two trees $\Tm_2$ and $\Tm_1$ (with $\Tm_2$ on the left of $\Tm_1$) to a single node $\mathfrak{r}'$, in the same way that $\mathcal{G}$ is associated with $\Tm$. Given a large positive integer $D$, we will be considering $(\mathcal{G}\mathcal{G}^*)^{D}$, which is associated with a tree $\Tm^D$. The precise description is as follows.

 \begin{figure}[h!]
  \includegraphics[width=0.58\linewidth]{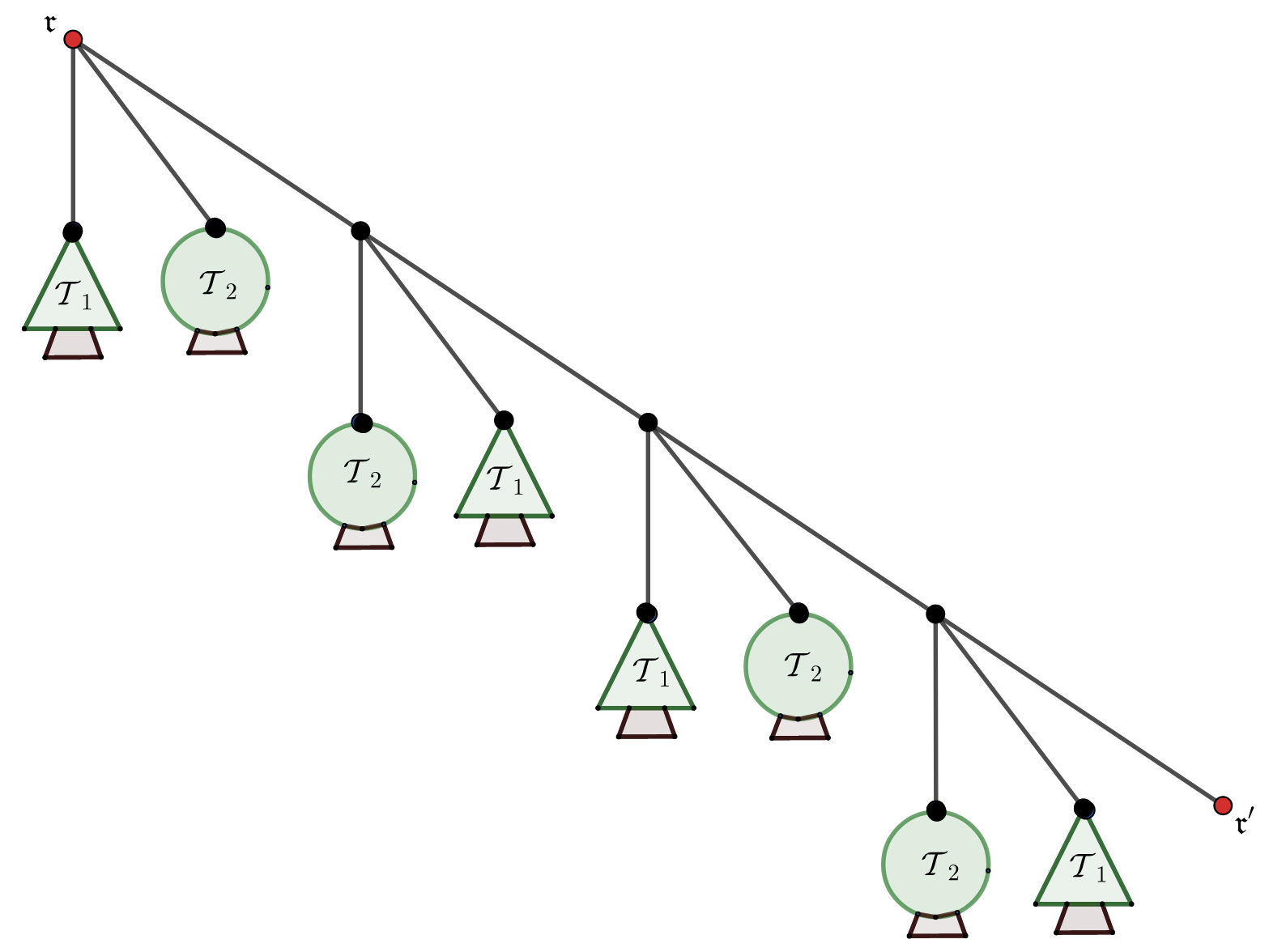}
  \caption{Construction of the tree $\mathcal T^D$ by successive plantings of trees $\mathcal T_1$ and $\mathcal T_2$ onto the first two nodes of a ternary tree starting with a root $\mathfrak r$ and stopping after $2D$ steps leaving a leaf node $\mathfrak r'$. In the figure, $D=2$.}
  \label{fig:manytrees}
\end{figure}

First, $\Tm^D$ is a tree with root node $\mathfrak{r}_0=\mathfrak{r}$, and its first two subtrees (from the left) are $\Tm_1$ and $\Tm_2$. The third subtree has root $\mathfrak{r}_1$, and its first two subtrees (from the left) are $\Tm_2$ and $\Tm_1$. The third subtree has root $\mathfrak{r}_2$, and its first two subtrees (from the left) are $\Tm_1$ and $\Tm_2$, and so on. This process repeats and eventually stops at $\mathfrak{r}_{2D}=\mathfrak{r}'$, which is a single node, and finishes the construction of $\Tm^D$. As usual, denote by $\Lm^D$ and $\mathcal N^D$ the set of leaves and branching nodes respectively. Then the kernel of $(\mathcal{G}\mathcal{G}^*)^{D}$ is given by
\begin{multline}
((\mathcal{G}\mathcal{G}^*)^{D})_{kk'}=\sum_{(k_{\mathfrak{n}}:\mathfrak{n}\in\Tm^D)}\mathcal{K}^{(D)}(|k_{\mathfrak{r}_j}|_\beta^2-|k_{\mathfrak{r}_{j+1}}|_\beta^2:0\leq j\leq 2D-1,\,k_{\mathfrak{l}}:\mathfrak{r}'\neq\mathfrak{l}\in\Lm^D)\\\times\bigg(\frac{\alpha T}{L^{d}}\bigg)^{2D(n_1+n_2+1)}\prod_{\mathfrak{r}'\neq\mathfrak{l}\in\Lm^D}\eta_{k_{\mathfrak{l}}}^{\iota_{\mathfrak{l}}},\end{multline}
where $|\mathcal{K}^{(D)}|\leq L^\theta$ and $|\partial \mathcal{K}^{(D)}|\leq L^\theta T$, and the sum is taken over all admissible assignments $(k_{\mathfrak{n}}:\mathfrak{n}\in\Tm^D)$ that satisfies $(k_{\mathfrak{r}},k_{\mathfrak{r}'})=(k,k')$, that $|k_{\mathfrak{l}}|\leq L^\theta$ for $\mathfrak{r}'\neq\mathfrak{l}\in\Lm$, and that $|\Omega_{\mathfrak{n}}-\sigma_{\mathfrak{n}}|\leq T^{-1}$ for $\mathfrak{n}\in\Nm^D$, where $\sigma_{\mathfrak{n}}=O(L^{\theta^{-1}})$ are fixed. Moreover, $\mathcal{K}^{(D)}$ depends on the variables $k_{\mathfrak{r}}=k$ and $k_{\mathfrak{r}'}=k'$ \emph{only through the quantities} $|k_{\mathfrak{r}_j}|_\beta^2-|k_{\mathfrak{r}_{j+1}}|_\beta^2$ for $0\leq j\leq 2D-1$.

Now, note that each $((\mathcal{G}\mathcal{G}^*)^{D})_{kk'}$ is an explicit multilinear Gaussian expression. Since for fixed $k$ (or $k'$) the number of choices for $k'$ (or $k$) is $O(L^{d+\theta})$, by Schur's estimate we know
\[\|(\mathcal{G}\mathcal{G}^*)^{D}\|_{\ell^2\to\ell^2}\lesssim L^{d+\theta}\sup_{k,k'}|((\mathcal{G}\mathcal{G}^*)^{D})_{kk'}|.\] So it suffices to $L$-certainly bound $|((\mathcal{G}\mathcal{G}^*)^{D})_{kk'}|$ \emph{uniformly} in $k$ and $k'$. We first consider this estimate with fixed $(k,k')$. Applying Lemma \ref{largedev}, we can fix some pairings of $\Tm^D$ and the set $\mathcal{S}^D$ of single leaves, and argue as in the proof of Proposition \ref{lwp1} to conclude $L$-certainly that
\[|((\mathcal{G}\mathcal{G}^*)^{D})_{kk'}|^2\lesssim L^\theta\bigg(\frac{\alpha T}{L^{d}}\bigg)^{4D(n_1+n_2+1)}\sum_{(k_{\mathfrak{l}}:\mathfrak{l}\in\mathcal{S}^D)}\sum_{(k_{\mathfrak{l}}:\mathfrak{l}\in\Lm^D\backslash\mathcal{S}^D)}^{**}\sum_{(k_{\mathfrak{l}'}:\mathfrak{l}\in\Lm^D\backslash\mathcal{S}^D)}^{**'}1,\] 

where the condition for summation, as in the proof of Proposition \ref{lwp1}, is that the unique admissible assignment $(k_{\mathfrak{n}}:\mathfrak{n}\in\mathcal{T}^D)$ determined by $(k_{\mathfrak{l}}:\mathfrak{l}\in\mathcal{S}^D)$ and $(k_{\mathfrak{l}}:\mathfrak{l}\in\Lm^D\backslash\mathcal{S}^D)$ satisfies all the conditions listed above, and the same happens for $(k_{\mathfrak{n}}':\mathfrak{n}\in\mathcal{T}^D)$ corresponding to $(k_{\mathfrak{l}}:\mathfrak{l}\in\mathcal{S}^D)$ and $(k_{\mathfrak{l}}':\mathfrak{l}\in\Lm^D\backslash\mathcal{S}^D)$. We know that $\mathcal T^D$ is a tree of scale $2D(n_1+n_2+1)$ and so $|\Lm^D|=4D(n_1+n_2+1)+1$; let the number of pairings be $p$, then $|\mathcal{S}^D|=4D(n_1+n_2+1)-2p$. By Proposition \ref{countingbd} we can bound\footnote{Strictly speaking we need to modify Proposition \ref{countingbd} a little, as we do not assume $|k'|\leq L^\theta$. But this will not affect the proof, which relies on the translation-invariant Lemma \ref{counting}.} the number of choices for $(k_{\mathfrak{l}}:\mathfrak{l}\in\mathcal{S}^D)$ and $(k_{\mathfrak{l}}':\mathfrak{l}\in\Lm^D\backslash\mathcal{S}^D)$ by $M=L^\theta Q^{4D(n_1+n_2+1)-p}$, and bound the number of choices for $(k_{\mathfrak{l}}':\mathfrak{l}\in\Lm^D\backslash\mathcal{S}^D)$ given $(k_{\mathfrak{l}}:\mathfrak{l}\in\mathcal{S}^D)$ by $M'=L^\theta Q^{p}$. In the end we get, for any \emph{fixed} $(k,k')$, that $L$-certainly 
\[L^{d+\theta}\sup_{k,k'}|((\mathcal{G}\mathcal{G}^*)^{D})_{kk'}|\leq L^{d+\theta}\bigg(\frac{\alpha T}{L^{d}}\bigg)^{2D(n_1+n_2+1)}\left(MM'\right)^{1/2}\leq L^{d+\theta}\rho^{2D(n_1+n_2+1)}.\]

Finally we need to $L$-certainly make the above bound uniform in all choices of $(k,k')$. This is not obvious since we impose no upper bound on $|k|$ and $|k'|$, so the number of exceptional sets we remove in the $L$-certain condition could presumably be infinite. However, note that the coefficient $\boldsymbol{\mathcal{K}}$ depends on $k$ and $k'$ only through the quantities $|k|_\beta^2-|k_{\mathfrak{r}_{j}}|_\beta^2$. Let $\mathcal{D}=\Lm\backslash\{\mathfrak{r'}\}$, then $|k_{\mathfrak{l}}|\leq L^\theta$ for $\mathfrak{l}\in\mathcal{D}$, and the condition for summation restricts that $\big||k|_\beta^2-|k_{\mathfrak{r}_{j}}|_\beta^2\big|\leq L^{\theta^{-1}}$. The reduction from infinitely many possibilities for $k$ (and hence $k'$) to finitely many is done by invoking the following result, whose proof will be left to the end:
\begin{claim}\label{extra0} Let $k\in\mathbb{Z}_L^d$, consider the function \[f_{(k)}:m\mapsto |k|_\beta^2-|k+m|_{\beta}^2,\quad \mathrm{Dom}(f_{(k)})=\big\{m\in\mathbb{Z}_L^d:|m|\leq L^\theta,\big||k|_\beta^2-|k+m|_{\beta}^2\big|\leq L^{\theta^{-1}}\big\},\] then there exists finitely many functions $f_1,\cdots,f_A$ where $A\leq L^{C\theta^{-1}}$, such that for any $k\in\mathbb{Z}_L^d$ there exists $1\leq j\leq A$ such that $|f_{(k)}-f_j|\leq L^{-\theta^{-1}}$ on $\mathrm{Dom}(f_{(k)})$.
\end{claim}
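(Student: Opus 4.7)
\textbf{Proof plan for Claim \ref{extra0}.}

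Expand $f_{(k)}(m) = |k|_\beta^2 - |k+m|_\beta^2 = -2\,c(k)\cdot m + g(m)$, where $c(k) := (\beta_1k_1,\dots,\beta_dk_d) \in \mathbb{R}^d$ and $g(m) := -|m|_\beta^2$ is a universal function independent of $k$. Since any candidate of the form $m\mapsto -2c^*\cdot m + g(m)$ has the same $g$-piece, approximating $f_{(k)}$ is equivalent to approximating the linear form $L_k(m) := -2c(k)\cdot m$. On $\mathrm{Dom}(f_{(k)})$ the defining inequality combined with $|g(m)|\leq L^{2\theta}$ gives the automatic a priori bound $|L_k(m)| \leq L^{\theta^{-1}} + L^{2\theta} \leq 2L^{\theta^{-1}}$.

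\textbf{Step 1 (Bulk, $|c(k)|_\infty \leq L^{\theta^{-1}+2}$).} Take the grid
\[
\mathcal{C}^{\mathrm{bulk}} := (L^{-\theta^{-1}-2\theta}\mathbb{Z})^d \cap [-L^{\theta^{-1}+2},L^{\theta^{-1}+2}]^d,
\]
whose cardinality is $\leq L^{C_1(d)\theta^{-1}}$, and include $m\mapsto -2c^*\cdot m + g(m)$ in the list for each $c^*\in\mathcal{C}^{\mathrm{bulk}}$. Picking $c^*\in\mathcal{C}^{\mathrm{bulk}}$ closest to $c(k)$ yields $|c(k)-c^*|_\infty \leq L^{-\theta^{-1}-2\theta}$, and hence for every $m$ with $|m|\leq L^\theta$,
\[
|L_k(m) - (-2c^*\cdot m)| \;\leq\; 2d\cdot L^{-\theta^{-1}-2\theta}\cdot L^\theta \;\leq\; L^{-\theta^{-1}},
\]
which handles the bulk case on all of $\mathrm{Dom}(f_{(k)}) \subseteq \{|m|\leq L^\theta\}$.

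\textbf{Step 2 (Tail, $|c(k)|_\infty > L^{\theta^{-1}+2}$).} The inequality $|c(k)\cdot m|\leq 2L^{\theta^{-1}}$ now forces $\mathrm{Dom}(f_{(k)})$ into a thin slab through $0$, so the integer lattice $\Lambda_k := \mathrm{span}_{\mathbb{Z}}(\mathrm{Dom}(f_{(k)})) \subseteq \mathbb{Z}_L^d$ has rank $d_k \leq d$ and a generating set of norm $\leq L^\theta$. Minkowski's successive-minima theorem then produces a reduced $\mathbb{Z}$-basis $v_1,\dots,v_{d_k}$ of $\Lambda_k$ with $|v_j|\leq C_dL^\theta$. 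The restriction of $L_k$ to $\mathrm{Dom}(f_{(k)})$ depends only on $c(k)|_{V_k}$ where $V_k := \mathbb{R}\cdot\Lambda_k$, and expressing the $v_j$ as integer combinations of elements of $\mathrm{Dom}(f_{(k)})$ (with coefficients controlled by the dual reduced basis) gives $|L_k(v_j)|\leq L^{\theta^{-1}+C_d\theta}$, hence $|c(k)|_{V_k}|\leq L^{\theta^{-1}+C_d'\theta}$. Now enumerate all rank-$\leq d$ sublattices $\Lambda \subseteq \mathbb{Z}_L^d$ admitting a basis of norm $\leq C_dL^\theta$ (there are at most $L^{C_2(d)}$ such $\Lambda$ by counting reduced bases in $(\mathbb{Z}_L^d\cap\{|\cdot|\leq C_dL^\theta\})^d$), and for each such $\Lambda$ discretize the bounded region of candidate $c|_{V_\Lambda}$ on a grid of spacing $L^{-\theta^{-1}-2\theta}$, adding $m\mapsto -2\tilde c\cdot m + g(m)$ to the list for each grid point $\tilde c$ extended arbitrarily into $V_\Lambda^\perp$. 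The total count stays $\leq L^{C\theta^{-1}}$, and for any tail $k$ the representative associated to $\Lambda_k$ and the grid point nearest $c(k)|_{V_k}$ approximates $f_{(k)}$ on $\mathrm{Dom}(f_{(k)})$ to within $L^{-\theta^{-1}}$.

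\textbf{Main obstacle.} The delicate part is Step 2: one must track how the norms of the reduced basis and its dual interact with the bound on $L_k(v_j)$, and control the number of distinct $\Lambda_k$ that arise. These are standard geometry-of-numbers considerations, but verifying that the grid spacing, covolume factors, and norm estimates all compose to keep the total count within $L^{C\theta^{-1}}$ (while still delivering the $L^{-\theta^{-1}}$ precision on each $\mathrm{Dom}(f_{(k)})$) requires careful constant-tracking. The bulk case, by contrast, is a direct pigeonhole on a grid in $\mathbb{R}^d$.
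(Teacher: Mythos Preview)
Your proposal is correct and follows essentially the same geometry-of-numbers idea as the paper, though the organization differs. The paper works with a general affine function $g(m)=x\cdot m+X$, picks a maximal affinely independent subset $\{q_0,\dots,q_r\}\subset E$ (at most $L^{C}$ choices), passes to coordinates in a reduced basis of the primitive lattice generated by the $q_j-q_0$, and observes that the bounds $|g(q_j)|\leq L^{2+\theta^{-1}}$ directly force $|y|,|Y|\leq L^{C+\theta^{-1}}$ via Cramer's rule on the integer matrix of the $k^{(j)}$; then it discretizes $(y,Y)$. Your bulk case is the same discretization without any lattice step, and your tail case is the analogue of the paper's lattice argument, with the sublattice $\Lambda_k$ playing the role of the primitive lattice.

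Two minor points. First, your exponent in Step~2 is too optimistic: from $|L_k(m_i)|\leq 2L^{\theta^{-1}}$ for $d_k$ linearly independent domain elements $m_i$ one gets, via Cramer's rule and the fact that the Gram determinant of the integer vectors $Lm_i$ is at least $1$, only $|c(k)|_{V_k}|\leq L^{C_d+\theta^{-1}}$ (not $L^{\theta^{-1}+C_d\theta}$); this still yields a grid of size $\leq L^{C\theta^{-1}}$, so the conclusion is unaffected. Second, the phrase ``expressing the $v_j$ as integer combinations of domain elements with coefficients controlled by the dual reduced basis'' is not quite the right mechanism---the reduced basis vectors need not be integer combinations of domain elements with small coefficients---but the argument you want is exactly the Cramer step above, applied directly to domain elements rather than to the $v_j$. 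The paper sidesteps this by making the $q_j$ (which \emph{are} in the domain) the primary data, so the bound on the linear coefficient is immediate; this also removes the need for your bulk/tail split.
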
Now it is not hard to see that Claim \ref{extra0} allows us to obtain a bound of the form proved above that is uniform in $(k,k')$, after removing at most $O(L^{C\theta^{-1}})$ exceptional sets, each of which having probability $\lesssim e^{-L^\theta}$. This then implies
\[\|(\mathcal{G}\mathcal{G}^*)^{D}\|_{\ell^2\to\ell^2}\lesssim L^{d+\theta}\rho^{2D(n_1+n_2+1)},\] hence
\[\|\mathcal{G}\|_{\ell^2\to\ell^2}\lesssim L^{\frac{d+\theta}{2D}}\rho^{n_1+n_2+1}.\] By fixing $D$ to be a sufficiently large positive integer, we deduce the correct operator bound for $\mathcal{G}$, and hence for $\mathcal{X}$ and $\mathcal{P_+}$. This completes the proof of Proposition \ref{lwp3}.\end{proof}
\begin{proof}[Proof of Claim \ref{extra0}] We will prove the result for any linear function $g(m)=x\cdot m+X$, where $x\in\mathbb{R}^d$ and $X\in\mathbb{R}$ arbitrary. We may also assume $m\in\mathbb{Z}^d$ instead of $\mathbb{Z}_L^d$; the domain $\mathrm{Dom}(g)$ will then be the set $E$ of $m$ such that $|m|\leq L^{1+\theta}$ and $|g(m)|\leq L^{2+\theta^{-1}}$.

Let the affine dimension $\dim(E)=r\leq d$, then $E$ contains a maximal affine independent set $\{q_j:0\leq j\leq r\}$. The number of choices for these $q_j$ is at most $L^{d+1}$, so we may fix them. Let $\Lm$ be the primitive lattice generated by $\{q_j-q_0:1\leq j\leq r\}$, and fix a reduced basis $\{\ell_j:1\leq j\leq r\}$ of $\Lm$. For any $m\in E$ there is a unique integer vector $k=(k_1,\cdots,k_r)\in\mathbb{Z}^r$ such that $|k|\lesssim L^{1+\theta}$, $m-q_0=k_1\ell_1+\cdots+k_r\ell_r$, and as a linear function we can write $g(m)=y\cdot k+Y$, where $y\in\mathbb{R}^r$ and $Y=g(q_0)\in\mathbb{R}$.

Now, let the $k\in\mathbb{Z}^r$ corresponding to $m=q_j$ be $k^{(j)}$, where $1\leq j\leq r$, then since $q_0\in E$ and $q_j\in E$ we conclude that $|y\cdot k^{(j)}|\leq L^{3+\theta^{-1}}$. As the $k^{(j)}$ are linear independent integer vectors in $\mathbb{Z}^r$ with norm bounded by $L^{1+\theta}$, we conclude that $|y|\leq L^{C+\theta^{-1}}$, and consequently $|Y|\leq L^{C+\theta^{-1}}$. We may then approximate $g(m)$ for $m\in E$ by $y_j\cdot k+Y_j$, where $y_j$ and $Y_j$ are one of the $L^{C\theta^{-1}}$ choices that approximate $y$ and $Y$ up to error $L^{-\theta^{-1}}$, and choose $g_{(j)}=y_j\cdot k+Y_j$.
\end{proof}
\subsection{The worst terms}\label{badterm}In this section we exhibit terms $\Jm_\Tm$ that satisfy the lower bound (\ref{lowerbd}). These are the terms corresponding to trees $\Tm$ and pairings (see Remark \ref{decomp1}) as shown in Figure \ref{fig:counterexample}, where $\Tm$ is formed from a single node by successively attaching two leaf nodes, and the `left' node attached at each step is paired with the `right' node attached in the next step. Let the scale $\mathfrak s(\Tm)=r$, then $\Tm$ has exactly $r-1$ pairings. For simplicity we will consider the rational case $\beta_j=1$ and $T\leq L^{2-\delta}$; the irrational case is similar.

  \begin{figure}[h!]
  \includegraphics[width=0.58\linewidth]{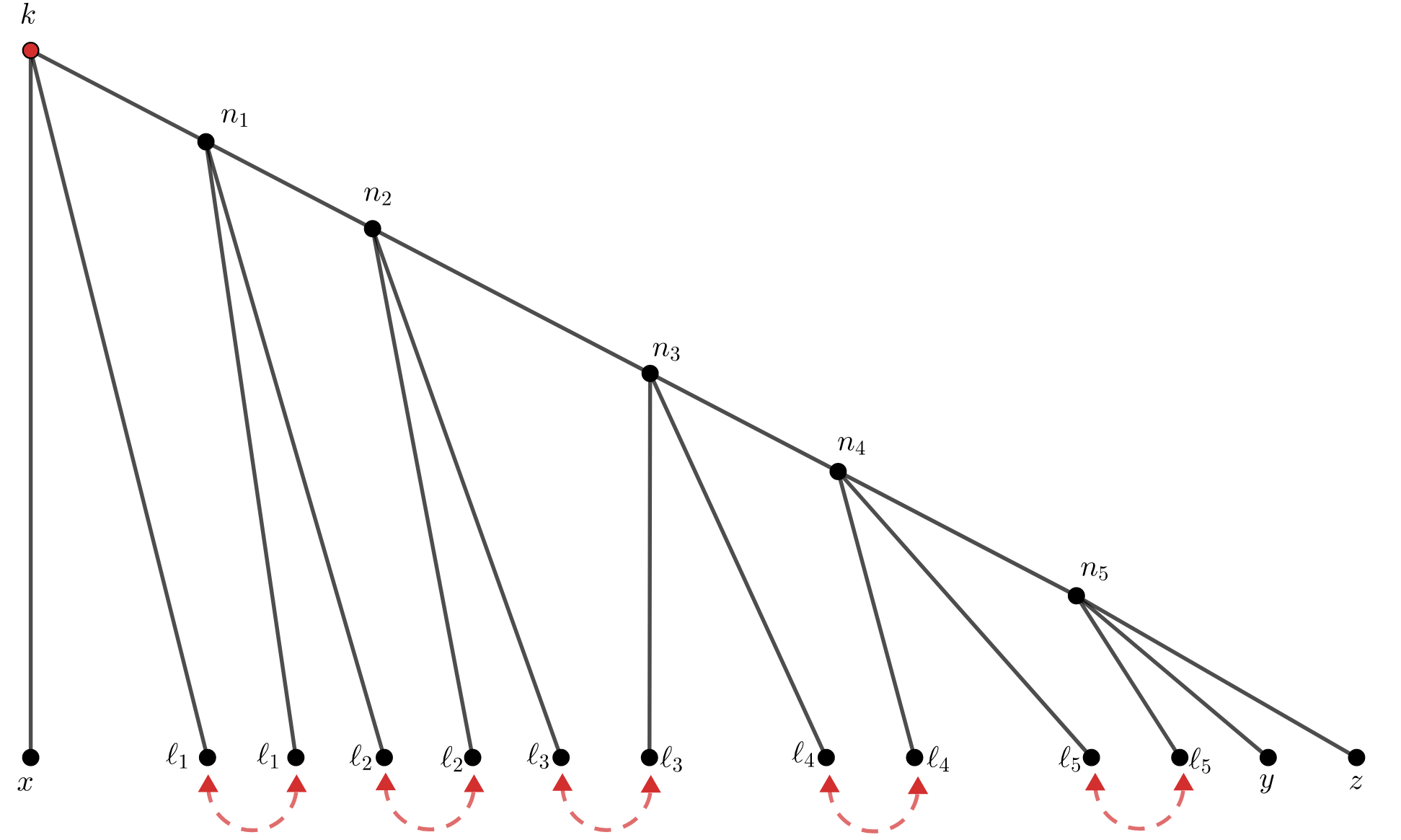}
  \caption{This is a tree of scale $\mathfrak s (\mathcal T)=6$ and $p=6-1=5$ pairings. The pairings force $|y-z|=|n_5-\ell_5|=|n_4-\ell_4|=\ldots=|k-x|$. }
  \label{fig:counterexample}
\end{figure}

Here it is more convenient to work with the time variable $t$ (instead of its Fourier dual $\tau$). To show (\ref{lowerbd}), since $b>1/2$, we just need to bound $(\Jm_\Tm)_k(t)$ from below for some $k$ and some $t\in[0,1]$; moreover since $\chi\equiv 1$ on $[0,1]$, and using the recursive definition (\ref{defjt}), we can write
\begin{equation}\label{formulabadjt1}(\Jm_\Tm)_k(t)=\bigg(\frac{\alpha T}{L^d}\bigg)^r\sum_{x-y+z=k}\bigg(\sum_{\ell_1,\cdots,\ell_{r-1}}\mathcal{A}\cdot\prod_{j=1}^{r-1}n_{\mathrm{in}}(\ell_j)|\eta_{\ell_j}|^2\bigg)\cdot \eta_x\overline{\eta_y}\eta_z \cdot \sqrt{n_{\mathrm{in}}(x)n_{\mathrm{in}}(y)n_{\mathrm{in}}(z)},
\end{equation}
where the variables in the summation satisfy (due to admissibility) that
\[k-x=n_1-\ell_1=n_2-\ell_2=\cdots=n_{r-1}-\ell_{r-1}=y-z:=q,\] and the coefficient $\mathcal{A}$ is given by
\begin{equation}\label{formulabadjt2}\mathcal{A}=\int_{t>t_1>\cdots >t_r>0}e^{2\pi iT(t_1\Omega_1+\cdots +t_r\Omega_r)}\,\mathrm{d}t_1\cdots\mathrm{d}t_r,
\end{equation} with $\Omega_j$ being the resonance factors, namely
\[\Omega_1=2q\cdot(k-n_1),\,\Omega_2=2q\cdot(n_1-n_2),\cdots,\,\Omega_{r-1}=2q\cdot(n_{r-2}-n_{r-1}),\,\Omega_r=2q\cdot(n_{r-1}-z).\]

In (\ref{formulabadjt1}) we may replace $|\eta_{\ell_j}|^2$ by 1, so the factor in the big parenthesis, denoted by $\mathcal{A}_{kxyz}$, involves no randomness. Therefore with a high probability,
\[|(\Jm_\Tm)_k(t)|^2\sim\bigg(\frac{\alpha T}{L^d}\bigg)^{2r}\sum_{x-y+z=k}|\mathcal{A}_{kxyz}|^2.\] In the above sum we may fix $q\in \mathbb{Z}_L^d$ with $0<|q|\lesssim L^{-1}$ which has $O(1)$ choices, and write
\begin{multline*}\mathcal{A}_{kxyz}=\int_{t>t_1>\cdots >t_r>0}e^{4\pi iTq\cdot[t_1(k-q)+t_r(q-z)]}\,\mathrm{d}t_1\cdots\mathrm{d}t_r\\\times\sum_{\ell_1,\cdots,\ell_{r-1}}e^{4\pi iT [(t_2-t_1)q\cdot \ell_1+\cdots +(t_r-t_{r-1})q\cdot\ell_{r-1}]}\prod_{j=1}^{r-1}n_{\mathrm{in}}(\ell_j).\end{multline*} By Poisson summation, and noticing that $|Tt_jq|\lesssim L^{1-\delta}$, we conclude that up to constants
\[\sum_{\ell_1,\cdots,\ell_{r-1}}e^{4\pi iT [(t_2-t_1)q\cdot \ell_1+\cdots +(t_r-t_{r-1})q\cdot\ell_{r-1}]}\prod_{j=1}^{r-1}n_{\mathrm{in}}(\ell_j)=L^{(r-1)d}\prod_{j=1}^{r-1}\widehat{n_{\mathrm{in}}}(Tq(t_{j+1}-t_j))+O(L^{-\infty}).\] By making change of variables $s_j=t_{j}-t_{j+1}\,(1\leq j\leq r-1)$ and $s_0=t-t_1$, $s_r=t_r$, we can reduce
\[\mathcal{A}_{xyz}\approx L^{(r-1)d}\int_{s_0+\cdots +s_{r}=t} e^{4\pi iT[(t-s_0)q\cdot(k-q)+s_rq\cdot(q-z)]}\prod_{j=1}^{r-1}\widehat{n_{\mathrm{in}}}(Tqs_j).\] By choosing some particular $(k,q,z)$ we may assume $q\cdot(k-q)=q\cdot(q-z)=0$, and if we also choose $n_{\mathrm{in}}$ such that $\widehat{n_{\mathrm{in}}}$ is positive, say $n_{\mathrm{in}}(k)=e^{-|k|^2}$, and $t=\min(1,LT^{-1})$, then we have \[|\mathcal{A}_{kxyz}|\sim L^{(r-1)d}\min(1,LT^{-1})^{r},\] and hence with high probability
\[\sup_{k,t}|(\Jm_\Tm)_k(t)|\gtrsim L^{-d}\min(\alpha T,\alpha L)^r=L^{-d}\rho^r\] for any fixed $r$, hence (\ref{lowerbd}).
\begin{rem}\label{decomp1}Here, strictly speaking, we are further decomposing $\Jm_\Tm$ into the sum of terms $\Jm_{\Tm,\mathcal{P}}$ where $\mathcal{P}$ represents the pairing structure of $\Tm$. In the proof of Proposition \ref{lwp1}, we are actually making the same decomposition (by identifying the set of pairings) and proving the same bound for each $\Jm_{\Tm,\mathcal{P}}$. On the other hand the example here shows that individual terms $\Jm_{\Tm,\mathcal{P}}$ can be very large in absolute value. Thus to get any improvement to the results of this paper, one would need to explore the subtle cancellations between the $\Jm_{\Tm,\mathcal{P}}$ terms with different $\Tm$ or different $\mathcal{P}$.
\end{rem}

\bigskip

\section{Proof of the main theorem}\label{proof of theorem} 

\bigskip 

In this section we prove Theorem \ref{thm2} (which also implies Theorem \ref{thm1}). Since we may alter the value of $T$, in proving Theorem \ref{thm2} we may restrict to the case $T/2\leq t\leq T$.

\medskip

First note that $\mathbb E|\widehat u(k, t)|^2=\mathbb E |a_k(s)|^2$, where $s:=\frac{t}{T} \in [1/2,1]$. By mass conservation, we have that $L^{-d/2}\sum_{k \in \Z^d_L}|a_k|^2=O(1)$ and hence $\|a_k\|_{\ell^\infty}\lesssim L^{d/2}$. As such, if we denote by $\Gamma$ the intersection of all the $L$-certain events in Propositions \ref{lwp0} and \ref{lwp1}, we have for $0\leq s \leq 1$ (denoting by $\mathbb E_\Gamma G= \mathbb E \mathbf 1_\Gamma G$)
\begin{equation}\label{expansion}
\begin{aligned}
\mathbb E|\widehat u(k, Ts)|^2&=\mathbb{E}_\Gamma[|(\Jm_0)_k(s)|^2+|(\Jm_1)_k(s)|^2+2\mathrm{Re}\overline{(\Jm_0)_k(s)}(\Jm_1)_k(s)+2\mathrm{Re}\overline{(\Jm_0)_k(s)}(\Jm_2)_k(s)]\\
&+\sum_{3\leq n\leq N}2\mathbb{E}\mathrm{Re}\overline{(\Jm_0)_k(s)}(\Jm_n)_k(s)+\sum_{1\leq n_1,n_2\leq N;n_1+n_2\geq 3}\mathbb{E}_\Gamma\overline{(\Jm_{n_1})_k(s)}(\Jm_{n_2})_k(s)\\
&+\sum_{n\leq N}2\mathbb{E}_\Gamma\mathrm{Re}\overline{(\Rm_{N+1})_k(s)}(\Jm_n)_k(s)+\mathbb{E}_\Gamma|(\Rm_{N+1})_k(s)|^2+O(e^{-L^\theta}).
\end{aligned}
\end{equation} By using Proposition \ref{lwp0} we can bound the last three terms by
\[\big|\mathbb{E}_\Gamma\overline{(\Jm_{n_1})_k(s)}(\Jm_{n_2})_k(s)\big|\leq L^{\theta +c(b-1/2)}\rho^{n_1+n_2-2}(\alpha^2T)\leq L^{-\delta}/10\frac{T}{T_{\mathrm{kin}}},\] 
\[\big|\mathbb{E}_\Gamma\mathrm{Re}\overline{(\Rm_{N+1})_k(s)}(\Jm_n)_k(s)\big|+\mathbb{E}_\Gamma|(\Rm_{N+1})_k(s)|^2\leq L^{\theta+C(b-1/2)}\rho^N\leq L^{-10d}.\] As with the first term on the second line of (\ref{expansion}), since $(\Jm_0)_k(s)=\chi(t)\sqrt{n_{\mathrm{in}}}\cdot\eta_k(\omega)$, by direct calculations and similar arguments as in the proof of Proposition \ref{lwp1} we can bound, for any tree $\Tm$ with $\mathfrak s (\Tm)=n$, that \[\big|\mathbb{E}\overline{(\Jm_0)_k(s)}(\Jm_{\Tm})_k(s)\big|\leq L^\theta\bigg(\frac{\alpha T}{L^{d}}\bigg)^nM,\] where $M$ is the quantity estimated in Proposition \ref{countingbd} (i.e. the number of strongly admissible assignments satisfying (\ref{admissibility2})), with all but one leaf of $\Tm$ being paired, and $\mathcal{R}=\{\mathfrak{r}\}$. By Corollary \ref{imprcor} we have
\[\big|\mathbb{E}\overline{(\Jm_0)_k(s)}(\Jm_{\Tm})_k(s)\big|\leq  L^\theta\bigg(\frac{\alpha T}{L^{d}}\bigg)^nQ^{n-2}L^{2d}T^{-1}\leq L^{\theta}\rho^{n-2}(\alpha^2T)\leq L^{-\delta}/10\frac{T}{T_{\mathrm{kin}}}.\]

It then suffices to calculate the main term, which is the first line of (\ref{expansion}). Up to an error of size $O(e^{-L^\theta})$ we can replace $\mathbb{E}_\Gamma$ by $\mathbb{E}$; also we can easily show that $\mathbb{E}\overline{(\Jm_0)_k(s)}(\Jm_1)_k(s)=0$. For $|s|\leq 1$ clearly $\mathbb{E}|(\Jm_0)_k(s)|^2=n_{\mathrm{in}}$; as for the other two terms, namely $\mathbb{E}|(\Jm_1)_k(s)|^2$ and $2\mathbb{E}\mathrm{Re}\overline{(\Jm_0)_k(s)}(\Jm_2)_k(s)$, we compute as follows: Recall that $(a_{\mathrm{in}})_{k}=\sqrt{n_{\mathrm{in}}(k)}\eta_k(\omega)$ and
\begin{align*}
(\Jm_1)_k(s)=-\frac{\alpha T}{L^{d}}\biggl[\sum_{(k_1, k_2, k_3); \Omega\neq 0}^\times (a_{\mathrm{in}})_{k_1}\overline{(a_{\mathrm{in}})_{k_2}}(a_{\mathrm{in}})_{k_3} \frac{e^{2\pi iT\Omega s}-1}{2\pi T\Omega}+is \sum_{(k_1, k_2, k_3); \Omega= 0}^\times (a_{\mathrm{in}})_{k_1}\overline{(a_{\mathrm{in}})_{k_2}}(a_{\mathrm{in}})_{k_3}\\
-is |(a_{\mathrm{in}})_{k}|^2(a_{\mathrm{in}})_{k} \biggr]
\end{align*}
and as such, we have that
\begin{align*}
\mathbb{E}|(\Jm_1)_k(t)|^2=&\frac{\alpha^2s^2T^2}{L^{2d}}\biggl[\sum_{(k_1, k_2, k_3); \Omega\neq 0}^\times (n_{\mathrm{in}})_{k_1}\overline{(n_{\mathrm{in}})_{k_2}}(n_{\mathrm{in}})_{k_3} \left|\frac{\sin \pi \Omega Ts}{\pi \Omega Ts}\right|^2+ \sum_{(k_1, k_2, k_3); \Omega= 0}^\times (n_{\mathrm{in}})_{k_1}\overline{(n_{\mathrm{in}})_{k_2}}(n_{\mathrm{in}})_{k_3}\\
&\qquad \qquad \qquad \qquad \qquad \qquad \qquad \qquad  \qquad \qquad \qquad \qquad \qquad \qquad + |(n_{\mathrm{in}})_{k}|^2(n_{\mathrm{in}})_{k} \biggr]\\
=&\frac{\alpha^2t^2}{L^{2d}}\sum_{(k_1, k_2, k_3); \Omega\neq 0}^\times (n_{\mathrm{in}})_{k_1}\overline{(n_{\mathrm{in}})_{k_2}}(n_{\mathrm{in}})_{k_3} \left|\frac{\sin \pi \Omega t}{\pi \Omega t}\right|^2 +O(\frac{T}{T_{\mathrm kin}}L^{-\delta}),
\end{align*}
where we used that $T<L^{2d-\delta}$ for the third term, and estimated the second term by $L^{2d-2+\theta}$ for general $\beta_j$ and by $L^{d+\theta}$ if $\beta_j$ are irrational (use for example Lemma \ref{counting} with $m=0$ and $T=L^2$ and $L^d$ respectively). 

A similar computation for $2\mathbb{E}\mathrm{Re}\overline{(\Jm_0)_k(s)}(\Jm_2)_k(s)$ (see Section 3 of \cite{BGHS2}) gives

\[\mathbb{E}[|(\Jm_1)_k(t)|^2+2\mathrm{Re}\overline{(\Jm_0)_k(t)}(\Jm_2)_k(t)]=\frac{\alpha^2t^2}{L^{2d}}\cdot\mathscr{S}_{t}(n_{\mathrm{in}}) +O(\frac{T}{T_{kin}}L^{-\delta}),\] where \begin{equation}\label{Riemannsum}
 \mathscr S_t(\phi):=\sum\limits_{\substack{k_i\in\mathbb{Z}_L^d\\ k-k_1+k_2-k_3=0}}\phi_k \phi_{k_1} \phi_{k_2} \phi_{k_3}  \left[ \frac{1}{\phi_k} - \frac{1}{\phi_{k_1}} + \frac{1}{\phi_{k_2}} - \frac{1}{\phi_{k_3}} \right]
\left| \frac{\sin(\pi t\Omega(\vec k)}{\pi t \Omega(\vec k)} \right|^2,
\end{equation}
with $\Omega(\vec k)=\Omega(k,k_1,k_2,k_3)=|k_1|_\beta^2-|k_2|_\beta^2+|k_3|_\beta^2-|k|_\beta^2$. Therefore we conclude that
\[\mathbb{E}|\widehat{u}(k,t)|^2=n_{\mathrm{in}}+\frac{\alpha^2t^2}{L^{2d}}\mathscr{S}_{t}(n_{\mathrm{in}})+O(\frac{T}{T_{\mathrm kin}}L^{-\delta/10}).\]

In the following section, we will derive the asymptotic for the sum $\mathscr S_t$, namely we will show that $\mathscr S_t(\phi)=\mathscr K_t(\phi)+O(t^{-1}L^{2d-\theta})$ for some $\theta>0$ where $\mathscr K_t$ is given by 

\begin{equation}\label{Kintegral}
\mathscr K_t(\phi):=L^{2d} \int_{\xi_1-\xi_2+\xi_3=\xi }\phi(\xi) \phi(\xi_1) \phi(\xi_2) \phi(\xi_3)  \left[ \frac{1}{\phi(\xi)} - \frac{1}{\phi(\xi_1)} + \frac{1}{\phi(\xi_2)} - \frac{1}{\phi(\xi_3)} \right]    \left| \frac{\sin(\pi t\Omega(\vec \xi))}{\pi t\Omega(\vec \xi)} \right|^2     d\xi_1 d\xi_2 \,d\xi_3.
\end{equation}

Finally, the proof is complete by using the fact that for a smooth function $f$, 
\[
t \int \left| \frac{\sin(\pi t x)}{\pi t x} \right|^2 f(x)\,dx = \pi^2 f(0) + O(t^{-1}).\qedhere
\]

\bigskip

\section{Number Theoretic Results}\label{number theory section} 

\bigskip

The purpose of this section is to prove the asymptotic formula for $\mathscr{S}_t$ defined in \eqref{Riemannsum}. The sum $\mathscr{S}_t$ should be regarded as a Riemann sum that approximates the integral  $\mathscr K_t$ in \eqref{Kintegral}. However, this approximation is far from trivial because of the highly oscillating factor $\left| \frac{\sin(\pi t\Omega(\vec \xi))}{\pi t\Omega(\vec \xi)} \right|^2$, which makes the problem intimately related to the equidistribution properties of the values quadratic form $\Omega$.

 \medskip

\begin{thm}\label{asymptotic}
Let $\phi\in \mathcal S(\R^d)$ with $d\geq 3$. For any $\delta>0$, there exists $\theta>0$ such that the asymptotic holds: 
\begin{enumerate}
\item (general tori) For any $\beta_i\in [1, 2]^d$, and any $t< L^{2-\delta}$, there holds that
$$
\mathscr S_t=\mathscr K_t +O(L^{2d-\theta} t^{-1}).
$$

\item (generic tori) For generic  $\beta_i\in [1, 2]^d$, and any $t< L^{d-\delta}$, there holds
$$
\mathscr S_t=\mathscr K_t +O(L^{2d-\theta} t^{-1}).
$$
\end{enumerate}
\end{thm}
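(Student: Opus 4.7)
The plan is to compare the Riemann sum $\mathscr S_t$ to the integral $\mathscr K_t$ through an oscillatory representation of the $\mathrm{sinc}^2$ factor followed by Poisson summation on the $2d$-dimensional lattice of free variables $(k_1,k_2)$ (with $k_3=k-k_1+k_2$ eliminated via the constraint). Specifically, using the standard identity
\[
\left|\frac{\sin(\pi t\Omega)}{\pi t\Omega}\right|^2 = \frac{1}{t}\int_{-t}^{t}\Bigl(1-\frac{|s|}{t}\Bigr)e^{2\pi is\Omega}\,ds,
\]
I would write $\mathscr S_t-\mathscr K_t = t^{-1}\int_{-t}^{t}(1-|s|/t)\,E_s\,ds$ where, for each fixed $s$,
\[
E_s = \sum_{(k_1,k_2)\in(\mathbb Z_L^d)^2} G(k_1,k_2)e^{2\pi is\Omega} - L^{2d}\int_{\mathbb R^{2d}} G(\xi_1,\xi_2)e^{2\pi is\Omega}\,d\xi_1 d\xi_2,
\]
with $G$ a fixed Schwartz function built from $\phi$ and the bracket factor in \eqref{Kintegral}, and $\Omega$ viewed as a quadratic form in $(\xi_1,\xi_2)$ with $\xi$ as a parameter.

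Next, I would apply Poisson summation to the lattice sum. The $m=0$ Fourier mode cancels exactly against the integral, so
\[
E_s = L^{2d}\sum_{(m_1,m_2)\in\mathbb Z^{2d}\setminus\{0\}} \int_{\mathbb R^{2d}} G(\xi_1,\xi_2)\exp\!\bigl(2\pi is\Omega - 2\pi iL\langle (m_1,m_2),(\xi_1,\xi_2)\rangle\bigr)\,d\xi_1 d\xi_2.
\]
The change of variables $u = \xi_1-\xi_2$, $v = \xi-\xi_1$ diagonalizes $\Omega = -2\langle u,v\rangle_\beta$ into a nondegenerate quadratic form of rank $2d$ and signature $(d,d)$. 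Completing the square centers the critical point at $(u_\ast,v_\ast)\propto Lm/s$, produces a pure quadratic Gauss phase $e^{-\pi iL^2\langle m_1,m_2\rangle_\beta/s}$, and standard stationary phase then yields
\[
\bigl|\text{oscillatory integral}\bigr|\lesssim_N |s|^{-d}\,\bigl\langle |u_\ast|+|v_\ast|\bigr\rangle^{-N}
\]
by the Schwartz decay of $G$ away from the critical point.

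To finish, I would split the analysis into regimes of $s$. For $|s|\leq L^{1-\theta}$, the shift $(u_\ast,v_\ast)=L(m_2,m_1)/(2s)$ lies outside the bounded support of $G$ for every $m\neq 0$, and rapid Schwartz decay produces an $O(L^{-\infty})$ contribution. For $L^{1-\theta}\leq |s|$ the dual sum is effectively restricted to $|m_i|\lesssim |s|/L$, and one must bound the resulting sum by combining the stationary-phase gain $|s|^{-d}$ with counting of integer points $(m_1,m_2)$ weighted by the Gauss phase. This is exactly where Lemma \ref{counting} enters, now applied in the dual lattice to the quadric $\langle m_1,m_2\rangle_\beta = \text{const}$: for arbitrary $\beta$ the bound $\#S_2\lesssim L^{d+1+\theta}T^{-1}$ on the regime $L\leq T\leq L^2$ translates through Poisson duality into an admissible range $t\leq L^{2-\delta}$, while for generic $\beta$ the Diophantine estimate $|\sum_i\beta_i\eta_i|\gtrsim\max\langle\eta_i\rangle^{-(d-1)-\theta}$ (already used in the proof of Lemma \ref{counting}) extends this to $t\leq L^{d-\delta}$.

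The main obstacle is that the stationary-phase decay $|s|^{-d}$ together with the raw count $(|s|/L)^{2d}$ of dual modes whose critical point lies in the support of $G$ combines only to $|E_s|\lesssim |s|^d$, and integrating even this against $t^{-1}(1-|s|/t)$ overshoots the target $L^{2d-\theta}/t$. The missing savings must come from the arithmetic of the residual quadratic Gauss phase $e^{-\pi iL^2\langle m_1,m_2\rangle_\beta/s}$ across the dual lattice; this is precisely the number-theoretic content of Lemma \ref{counting}, and it is what both produces the rational-torus threshold $L^{2-\delta}$ and, in the generic case, the improved threshold $L^{d-\delta}$. Once $|E_s|$ is controlled uniformly, the conclusion $\mathscr S_t = \mathscr K_t + O(L^{2d-\theta}t^{-1})$ follows by integration against $t^{-1}(1-|s|/t)$, and Theorem \ref{asymptotic} is proved.
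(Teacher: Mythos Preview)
Your overall architecture---write $\mathrm{sinc}^2$ via its Fourier transform, then compare sum to integral by Poisson summation---matches the paper's opening moves exactly. The paper makes the same change of variables $N_1=L(k_1-k)$, $N_2=L(k_3-k)$, writes the tent-function representation, and splits the $\tau$-integral at $|\tau|\sim L^{-1-\delta_1}$. On the small-$\tau$ region Poisson summation is applied just as you describe: the $c=0$ mode yields $\mathscr K_t$ plus a stationary-phase error, and the $c\neq 0$ modes are killed by non-stationary phase since the critical point is pushed out of the support. So far so good.

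The gap is in the large-$\tau$ (equivalently large-$s$) region, which you yourself flag as ``the main obstacle.'' You propose to rescue the deficit $|s|^{-d}\cdot(|s|/L)^{2d}=|s|^d L^{-2d}$ by invoking Lemma~\ref{counting} on the dual lattice, exploiting the residual Gauss phase $e^{-\pi i L^2\langle m_1,m_2\rangle_\beta/s}$. This does not work as stated: Lemma~\ref{counting} is a \emph{counting} bound on lattice points in thin quadric slabs, not a cancellation estimate for exponential sums. Counting level sets of $\langle m_1,m_2\rangle_\beta$ and summing phases over them does not recover the missing power of $L$ without an additional oscillatory input that you have not supplied. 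The paper does \emph{not} use Poisson summation at all in this region. Instead it keeps the physical-space sum $F(\tau)=\sum_N W(N/L)e(4\tau\mathcal Q(N))$, diagonalizes $\mathcal Q$ into $\sum_j\beta_j(p_j^2-q_j^2)$, performs Abel summation, and then applies the classical Gauss-sum bound $|G(s,n)|\lesssim n/\sqrt{q}$ (with $a/q$ a Dirichlet approximant to $\tau\beta_j$) together with Hua's $L^4$ lemma to extract the gain. The restriction $t<L^{2-\delta}$ enters because $\tau\beta_j\lesssim L^{-\delta}$ forces $q\gtrsim L^\delta$, which is what powers the Gauss-sum saving; this is genuinely different arithmetic input from Lemma~\ref{counting}. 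For part~(2) the paper simply cites \cite{BGHS2}, whose argument is substantially more delicate than a direct appeal to the Diophantine inequality.

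In short: your Poisson framework is correct for the easy region but you have not closed the hard region, and the tool you reach for (Lemma~\ref{counting}) is not the one that does the job. The missing ingredient is a direct Weyl/Gauss-sum estimate on $F(\tau)$ for $\tau$ away from $0$.
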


 It is not hard to see that $\mathscr K_t=O(L^{2d})t^{-1}$, which justifies the sufficiency of the error term bound above. 
 
 \begin{rem}
 It is interesting that, in the case of the rational torus for which $\beta_j=1$, the above asymptotic ceases to be true at the end point $t=L^2$. This corresponds to $\mu=1$ in \eqref{Rsum} below, whose asymptotic was studied in \cite{FGH, BGHS1} and yields a logarithmic divergence when $d=2$ and a different multiplicative constant for $d\geq 3$ compared to the asymptotic in the above theorem. 
 \end{rem}

\begin{proof}

The proof of part (2) is contained in Theorem 8.1 of \cite{BGHS2}. As such, we will only focus on the first part, which is less sophisticated. To simplify the notation, we will drop the subscript $t$ from $\mathscr S_t$ and $\mathscr K_t$. We use a refinement of Lemma 8.10 in \cite{BGHS2} which basically covers the case $t<L^{1-\delta}$. First, one observes that $\Omega(\vec k)=-2\mathcal Q (k_1-k,k_3-k)$ where $\mathcal Q(x,y):=\sum_{j=1}^d \beta_j x_j y_j$. As such, changing variables $N_1=L(k_1-k)\in \Z^d$ and $N_2=L(k_3-k)\in \Z^d$, we will write the sum $\mathscr S$ in the form 
\begin{equation}\label{Rsum}
\mathscr S=\sum_{N=(N_1, N_2)\in \Z^{2d}}W(\frac{N}{L}) g\left(4\mu \mathcal Q(N)\right); \quad g(x):=\left|\frac{\sin \pi x}{\pi x}\right|^2, \quad \mu:=\frac12 tL^{-2}<L^{-\delta}.
\end{equation}
where $W\in \mathcal S(\R^{2d})$. As such, we have 
\begin{equation}\label{Kint}
\mathscr K=L^{2d}\int_{(z_1, z_2) \in \R^{2d}} W(z_1, z_2) g(4\mu z_1 \cdot z_2)\; dz_1 dz_2. 
\end{equation}

$\bullet$ \emph{Step 1 (Truncation in $N$)}: We first notice that the main contribution of the sum $\mathscr S$ (resp. the integral $\mathscr K$) comes from the region $|N|\lesssim L^{1+\delta_1}$ (resp. $|(\xi_1, \xi_2)|\lesssim L^{\delta_1}$) where $\delta_1=\frac{\delta}{100}$. This uses the fact that $W$ is a Schwartz function with sufficient decay. As such, we can include (without loss of generality) in the sum $\mathscr S$ (resp. the integral $\mathcal K$) a factor $\chi(\frac{N}{L^{1+\delta_1}})$ (resp. $\chi(\frac{z}{L^{1+\delta_1}})$), where $\chi\in C_c^\infty(\R^d)$ is 1 on the unit ball $B(0, \frac{1}{10})$ and vanishes outside $B(0, \frac{2}{10})$.

\medskip

$\bullet$ \emph{Step 2 (Isolating the main term)} We now use that the Fourier transform of $g$ is given by the tent function $\widehat g(x)=1-|x|$ on the interval $[-1,1]$ and vanishes otherwise, to write (using the notation $e(x):=e^{2\pi i x}$)
\begin{align*}
\mathscr S=&\sum_{N=(N_1, N_2)\in \Z^{2d}}W(\frac{N}{L}) \chi(\frac{N}{L^{1+\delta_1}}) \int_{-1}^1 \widehat g(\tau) e\left(4\mu \tau \mathcal Q(N)\right) d\tau\\
=&\mu^{-1}\sum_{N=(N_1, N_2)\in \Z^{2d}}W(\frac{N}{L}) \chi(\frac{N}{L^{1+\delta_1}}) \int_{-\mu}^\mu \widehat g(\frac{\tau}{\mu}) e\left(4 \tau \mathcal Q(N)\right) d\tau\\
=&\mathscr S_A+\mathscr S_B,
\end{align*}
where $A$ is the contribution of $|\tau|\leq L^{-1-\delta_1}$ and $B$ is the contribution of the complementary region, which could be empty if $\mu<L^{-1-\delta_1}$ in which case we assume $B=0$. By Poisson summation, we have that 

\begin{align*}
\mathscr S_A=&\mu^{-1} \int_{|\tau|\leq \min(\mu, L^{-1-\delta_1})}\widehat g(\frac{\tau}{\mu}) \sum_{c\in \Z^{2d}}\int_{z\in \R^{2d}} W(\frac{z}{L})\chi(\frac{z}{L^{1+\delta_1}})e\left(4 \tau \mathcal Q(z)-c\cdot z\right)\; dz \, d\tau\\
=&\mu^{-1} L^{2d}\int_{|\tau|\leq \min(\mu, L^{-1-\delta_1})}\widehat g(\frac{\tau}{\mu}) \int_{z\in \R^{2d}} W(z)\chi(\frac{z}{L^{\delta_1}})e\left(4 \tau L^2 \mathcal Q(z)\right)\; dz \, d\tau\\
&+\mu^{-1} L^{2d} \int_{|\tau|\leq \min(\mu, L^{-1-\delta_1})}\widehat g(\frac{\tau}{\mu}) \sum_{\substack{c\in \Z^{2d}\\c\neq 0}}\int_{z\in \R^{2d}} W(z)\chi(\frac{z}{L^{\delta_1}})e\left(4 \tau L^2 \mathcal Q(z)-Lc\cdot z\right)\; dz \, d\tau\\
=&\mu^{-1} L^{2d}\int_{|\tau|\leq \mu}\widehat g(\frac{\tau}{\mu}) \int_{z\in \R^{2d}} W(z)\chi(\frac{z}{L^{\delta_1}})e\left(4 \tau L^2 \mathcal Q(z)\right)\; dz \, d\tau\\
&+\mu^{-1} L^{2d}\int_{ \min(\mu, L^{-1-\delta_1})<|\tau|\leq \mu}\widehat g(\frac{\tau}{\mu}) \int_{z\in \R^{2d}} W(z)\chi(\frac{z}{L^{\delta_1}})e\left(4 \tau L^2\mathcal Q(z)\right)\; dz \, d\tau\\\
&+\mu^{-1} L^{2d}\int_{|\tau|\leq \min(\mu, L^{-1-\delta_1})}\widehat g(\frac{\tau}{\mu}) \sum_{\substack{c\in \Z^{2d}\\c\neq 0}}\int_{z\in \R^{2d}} W(z)\chi(\frac{z}{L^{\delta_1}})e\left(4 \tau L^2 \mathcal Q(z)-Lc\cdot z\right)\; dz \, d\tau\\
=&\mathscr K +\mathscr S_{A1}+\mathscr S_{A2},
\end{align*}
where $\mathscr S_{A1}$ and $\mathscr S_{A2}$ are respectively the second and third terms in second to last equality. 

The remainder of the proof is to show that $\mathscr S_{A1}, \mathscr S_{A2}$ and $\mathscr S_{B}$ are error terms. 

\medskip

$\bullet$ \emph{Step 3 ($\mathscr S_{A1}$ and $\mathscr S_{A2}$ are error terms)}. To estimate $\mathscr S_{A1}$, we use the stationary phase estimate
 
$$\left|\int_{z\in \R^{2d}} W(z)\chi(\frac{z}{L^{\delta_1}})e\left(4 \tau L^2\mathcal Q(z)\right)\; dz\right| \lesssim (\tau L^2)^{-d}$$
and the fact that the term in only nonzero if $\mu > L^{-1-\delta_1}$, to bound
$$
\left|\mathscr S_{A1}\right|\lesssim \mu^{-1}\int_{  L^{-1-\delta_1} <|\tau|\leq \mu}|\widehat g(\frac{\tau}{\mu})| |\tau|^{-d} \, d\tau\lesssim \mu^{-1}L^{(1+\delta_1)(d-1)}=t^{-1}L^{d+1 +\delta_1(d-1)}\ll L^{2d-\delta}t^{-1}.
$$

For $\mathscr S_{A2}$, we use non-stationary phase techniques relying on the fact that the phase function $\Phi(z)=4\tau L^2 \mathcal Q(z) -Lc\cdot z$ satisfies $|\nabla_z \Phi(z)|=|L(4\tau L (z_2, z_1)-c)|\gtrsim L|c|$ for $c\neq 0$ since $|z|\leq \frac{L^{\delta_1}}{5}$. Therefore, one can integrate by parts in $z$ sufficiently many times and show that $\left|\mathscr S_{A1}\right| \ll L^{2d-\delta}t^{-1}$ as well. 

$\bullet$ \emph{Step 4 ($\mathscr S_{B}$ is an error terms)}. Here, we assume w.l.o.g. that $L^{-1-\delta_1} <\mu\leq L^{-\delta}$ (otherwise $\mathscr S_{B}=0$). Therefore,
$$
\mathscr S_{B}=\mu^{-1}\int_{L^{-1-\delta_1}<|\tau|\leq \mu} \widehat g(\frac{\tau}{\mu})F(\tau) d\tau, \qquad F(\tau)=\sum_{N=(N_1, N_2)\in \Z^{2d}}W(\frac{N}{L}) \chi(\frac{N}{L^{1+\delta_1}})   e\left(4 \tau \mathcal Q(N)\right).
$$
Recall that, $\mathcal Q(N)=\sum_{j=1}^d\beta_j (N_1)_j(N_2)_j$, so we will perform the following change of variables 
$$
(N_1)_j=\frac{p_j+q_j}{2}, \qquad (N_2)_j=\frac{p_j-q_j}{2}, \qquad p_j=q_j (\text(mod)2).
$$
As such the sum in $(N_1)_j, (N_2)_j \in \Z^{2}$, will become a sum 
$$
\sum_{(p_j, q_j)\in \Z^{2}} -\sum_{p_j \in 2\Z, q_j \in \Z}-\sum_{p_j \in \Z, q_j \in 2\Z} +2\sum_{(p_j, q_j)\in 2\Z^{2}}.
$$

We will estimate the contribution of the first sum, and it will be obvious from the proof that the other sums are estimated similarly. Also, by symmetry, we only need to consider the sums for which $p_j, q_j \geq 0$, which reduces us to 
$$
F(\tau)=\sum_{\substack{p_j, q_j\geq 0 \\ j=1, \ldots, d}} \widetilde W(\frac{(p,q)}{L}) \widetilde \chi(\frac{(p,q)}{L^{1+\delta_1}})\prod_{j=1}^de(\tau \beta_j p_j^2)e(-\tau \beta_j q_j^2)
$$
Let $G(s, n)=\sum_{p=0}^ne(s p^2)$ be the Gauss sum, and abusing notation, also denote by $G(s, x)=G(s, [x])$ for $x\in \R$ where $[x]$ is the floor function. Then, 
\begin{align*}
F(\tau)=\int_{\substack{x_j,y_j\geq 0\\j=1, \ldots d}}\widetilde W(\frac{(x,y)}{L}) \widetilde \chi(\frac{(x,y)}{L^{1+\delta_1}})\prod_{j=1}^d G'(\tau\beta_j, x_j)G'(\tau\beta_j, y_j)
\end{align*}

Integrating by parts in all the variables (equivalently performing an Abel summation), one obtains that

\begin{align*}
F(\tau)=\int_{\substack{x_j,y_j\geq 0\\j=1, \ldots d}}\partial_{x_1}\ldots \partial_{y_d}\widetilde W(\frac{(x,y)}{L}) \widetilde \chi(\frac{(x,y)}{L^{1+\delta_1}})\prod_{j=1}^d G(\tau\beta_j, x_j)G(\tau\beta_j, y_j) +l.o.t.
\end{align*}
where the l.o.t. are lower order terms that can be bounded is a similar or simpler way than the main term above.

We now recall the Gauss sum estimate for $G(s, n)$: Let $0\leq a<q\leq n$ be integers such that $(a, q)=1$ and $|s-\frac{a}{q}|<\frac{1}{qn}$ (for any $s$ and $n$ such a pair exists by Dirichlet's approximation theorem), then 
$$|G(s, n)|\leq \frac{n}{\sqrt q (1+n\|s-\frac{a}{q}\|^{1/2})}\leq \frac{n}{\sqrt q}.$$

Here $s=\tau \beta_j$ with $\tau\in [L^{-1-\delta_1}, L^{-\delta}]$, $\beta_j \in [1,2]$. This means that either $|n|<L^{2\delta}$ or $\frac{a}{q}\lesssim L^{-\delta}$( $\Rightarrow q\gtrsim L^{\delta}$), and in either case we get that $|G(s, n)|\lesssim L^{(1+\delta_1-\frac{\delta}{2})}$ since $|n|\lesssim L^{1+\delta_1}$ (note that the above argument works when $a>0$; if $a=0$ we have the better bound $|G(s,n)|\lesssim |s|^{-1/2}\leq L^{2/3}$).

\medskip 

As a result, we have 
$$
|F(\tau)|\lesssim L^{(1+\delta_1-\frac{\delta}{2})(2d-4)}\int_{\substack{x_j,y_j\geq 0\\j=1, \ldots d}}\left|\partial_{x_1}\ldots \partial_{y_d}\widetilde W(\frac{(x,y)}{L}) \widetilde \chi(\frac{(x,y)}{L^{1+\delta_1}}) \right| \prod_{j=1}^2 |G(\tau\beta_j, [x_j])||G(\tau\beta_j, [y_j])|
$$

This gives that 
$$
\left|\mathscr S_{B}\right|\lesssim L^{(1+\delta_1-\frac{\delta}{2})(2d-4)}\mu^{-1}  \int_{\substack{x_j,y_j\geq 0\\j=1, \ldots d}}\left|\partial_{x_1}\ldots \partial_{y_d}\widetilde W(\frac{(x,y)}{L}) \widetilde \chi(\frac{(x,y)}{L^{1+\delta_1}}) \right| \int_{|\tau|\leq \mu}\prod_{j=1}^2 |G(\tau\beta_j, [x_j])||G(\tau\beta_j, [y_j])| \, d\tau.
$$

Now using Hua's lemma (cf.~Lemma 20.6 \cite{IwKow}), we have that $\|G(\tau , n_j)\|_{L^4[0, 1]}\lesssim n_j^{1/2+\delta_1}\lesssim L^{(1+\delta_1)(\frac{1}{2}+\delta_1)}$, which gives that 
$$
\left|\mathscr S_{B}\right|\lesssim L^{(1+\delta_1-\frac{\delta}{2})(2d-4)}\mu^{-1}  L^{(1+\delta_1)(2+4\delta_1)}=L^{2d}t^{-1}L^{-\delta(d-2)+\delta_1(2d-2+4\delta_1)}\ll L^{2d-\theta}t^{-1}
$$
provided that $\theta<\min(1, \frac{(d-2)\delta}{2d})$, and recalling that $\delta_1=\frac{\delta}{100}$.

\medskip

\end{proof}

\medskip

\noindent {\bf Acknowledgments.} The authors would like to thank Andrea Nahmod, Sergey Nazarenko, and Jalal Shatah for illuminating discussions. Y. Deng was supported by NSF grant DMS 1900251. Z. Hani was supported by NSF grants DMS-1852749 and DMS-1654692, a Sloan Fellowship, and the ``Simons Collaboration Grant on Wave Turbulence". The results of this work were announced on Nov.~1, 2019 at a Simons Collaboration Grant meeting.

\bigskip

\Addresses

 \end{document}